\pgfplotsset{compat=1.17}
\pgfplotsset{plot coordinates/math parser=false}
\newcommand{\semibf}[1]{%
  \ensuremath{%
    \text{%
      \kern0pt\rlap{$#1$}\kern0.03em$#1$%
    }%
  }%
}
\newcommand{\N}{\mathbb{N}}
\newcommand{\R}{\mathbb{R}}
\newcommand{\Z}{\mathbb{Z}}
\newcommand{\defeq}{\coloneqq}
\newcommand{\eqdef}{\eqqcolon}
\newcommand{\eps}{\varepsilon}
\newcommand{\ddt}{\partial_t}
\newcommand{\ddx}{\partial_{x_1}}
\newcommand{\ddxx}{\partial_{x_1 x_1}^2}
\newcommand{\ddy}{\partial_{x_2}}
\newcommand{\dx}{\Delta x_1}
\newcommand{\dy}{\Delta x_2}
\newcommand{\dt}{\Delta t}
\newcommand{\xb}{\textbf{x}}
\newcommand{\Div}{\textnormal{div}_\xb \,}
\newcommand{\rhob}{\boldsymbol{\rho}}
\newcommand{\varrhob}{\boldsymbol{\varrho}}
\newcommand{\etab}{\boldsymbol{\eta}}
\newcommand{\nub}{\boldsymbol{\nu}}
\newcommand{\fb}{\boldsymbol{f}}
\newcommand{\ub}{\boldsymbol{u}}
\newcommand{\yb}{\textbf{y}}
\newcommand{\Rb}{\semibf{R}}
\newcommand{\imgrho}{\mathcal{I}}
\newcommand{\norm}[1]{\lVert #1 \rVert}
\newcommand{\mydot}{\raisebox{0.3ex}{\scalebox{0.5}{$\bullet$}}}
\newcommand{\numfluxkn}[5]{%
  F_{#1,k,n}^{#2,#3}%
  \if\relax\detokenize{#4}\relax
  \else
    \mleft( #4^{k,n}, #5^{k,n} \mright)%
  \fi
}
\newcommand{\numflux}[5]{
  F_{#1,n}^{#2,#3}
  \if\relax\detokenize{#4}\relax
  \else
    \mleft( #4, #5 \mright)
  \fi
}
\newcommand{\numfluxlong}[6]{
  F_{#1,n}^{#2,#3}
  \if\relax\detokenize{#4}\relax
  \else
    \mleft( #4, #5; #6 \mright)
  \fi
}
\newcommand{\flux}[4]{f_{#1,n}^{#2,#3}(#4)}
\newcommand{\redfluxk}[4]{f_{#1,\reva{k},n}^{#2,#3}(#4)}
\newcommand{\fluxk}[4]{f^k_{#1}\mleft(t^n, \xb^{#2,#3}, #4, \Rb_{#2,#3}^n \mright)}
\newcommand{\fluxself}[4]{f_{#1}\mleft(t^n, #2, #3, #4 \mright)}
\newcommand{\entropyfluxkn}[5]{\mathcal{F}_{#1,k,n}^{#2,#3,\kappa} \mleft(#4, #5 \mright)}
\newcommand{\entropyfluxknkappa}[6]{\mathcal{F}_{#1,k,n}^{#2,#3,#6} \mleft(#4, #5 \mright)}
\newcommand{\Lipf}[2]{
  \if\relax\detokenize{#2}\relax
    \mathcal{L}_{#1}
  \else
    \mathcal{L}_{#1,#2}
  \fi
}
\newcommand{\contconv}[1]{\boldsymbol{\mathcal{{R}}}[#1]\mleft( t, \xb\mright)}
\newcommand{\contconarg}[3]{\boldsymbol{\mathcal{{R}}}[#1]\mleft( #2, #3 \mright)}
\newcommand{\bigeps}{{\scalebox{1.5}{$\varepsilon$}}}
\newcommand{\biglambda}{\Lambda}
\newcommand{\reva}[1]{{\color{black} #1}}
\newcommand{\revb}[1]{{\color{black} #1}}
\newcommand{\revc}[1]{{\color{black} #1}}
\providecommand{\msc}[1]{\textit{2020 MSC:} #1}
\theoremstyle{plain}
\newtheorem{theorem}{Theorem}[section]
\newtheorem{proposition}[theorem]{Proposition}
\newtheorem{lemma}[theorem]{Lemma}
\newtheorem{corollary}[theorem]{Corollary}
\newtheorem{definition}[theorem]{Definition}
\newtheorem{remark}[theorem]{Remark}
\newtheorem{assumption}[theorem]{Assumption}
\begin{document}
\date{
  \small
   \today
  }

\title{Monotone-based Numerical Schemes for Two-Dimensional Systems of Nonlocal Conservation Laws}
\author{Anika Beckers\thanks{Chair of Numerical Analysis, Institute for Geometry and Applied Mathematics, RWTH Aachen University, Im Süsterfeld 2, 52072 Aachen, Germany, \tt{beckers@igpm.rwth-aachen.de}.} \and Jan Friedrich\thanks{Chair of Optimal Control, Department for Mathematics, School of Computation, Information and Technology, Technical University of Munich, Boltzmannstraße 3, 85748 Garching b. Munich, Germany, \tt{jan.friedrich@cit.tum.de}.}}

\maketitle

\begin{abstract}
 \noindent We present a \revc{general} class of numerical schemes for two-dimensional systems of nonlocal conservation laws, which are based on utilizing well-known monotone numerical flux functions after suitably approximating the nonlocal terms. 
  The considered systems are weakly coupled by the nonlocal terms and the underlying flux function is rather general to guarantee that our results are applicable to a wide range of common nonlocal models.
  We state sufficient conditions to ensure the convergence of the monotone-based numerical schemes to the unique weak entropy solution. 
  Moreover, we provide an error estimate that yields the convergence rate of $\mathcal{O}(\sqrt{\Delta t})$ for the numerical approximations of the solution.
  Our results include an existence and uniqueness proof of the nonlocal system, too.
  Numerical results illustrate our theoretical findings.
  
   \bigskip
   \noindent \textbf{Keywords:} {monotone finite-volume schemes, systems of nonlocal conservation laws, error estimates, pedestrian flow models}\\
  \msc{35L65, 35L03, 65M12, 65M15, 76A30}
\end{abstract}

\section{Introduction}
In recent years, multidimensional nonlocal conservation laws have been used to model, e.g.\ crowd motion~\revc{\cite{ACG15,CR19,BGIV20,goatin2025pedestrians,GR24,CGL12,colombo2018nonlocal,aggarwal2016crowd}}, material flow on conveyor belts~\revc{\cite{gottlich2014modeling,RWGG20,GGZ25}}, cluster formation \revc{and cryptography~\cite{CG25,CS26} or, applications related to laser technology~\cite{colombo2015nonlocal}}.
All these models can be described by the following system of nonlocal conservation laws in two space dimensions
\begin{equation}\label{eq:system}
    \begin{cases}
        \begin{aligned}
            &\ddt \rho^k + \Div \fb^k\mleft( t, \xb, \rho^k, \etab *  \rhob \mright) = 0 \quad &&,(t, \xb) \in \R^+ \times \R^2,\; k=1,\ldots,K, \\
            &\rhob(0,\xb) = \rhob_0(\xb)&&, \xb \in  \R^2
        \end{aligned}
    \end{cases}
\end{equation}
for the state variables $\rhob = \mleft( \rho^1,\ldots,\rho^K \mright)$, considered as a function $\rhob\!:\R^+ \times \R^2 \rightarrow \R^K$, $\mleft( t,\xb \mright) \mapsto \rhob\mleft( t,\xb \mright) $
and the nonlocal flux $\fb^k\!: \R^+ \times \R^2 \times \R \times \R^M \to \R^2$, $k=1,\ldots,K$ \revb{depending on convolutions of the state variables} with a mollifier $\boldsymbol{\eta}: \R^2 \rightarrow \R^{M\times K}$.
Thus, $\etab *  \rhob \in \R^M$, where $M$ is the number of combinations of kernels and state variables that have to be convoluted. 
In particular, we define for $m=1,\ldots,M$
\begin{equation}\label{eq:conv}
  \mleft( \etab * \rhob  \mright)_m \mleft( t, \xb \mright) = \int_{\R^2} \sum_{k=1}^{K} \eta^{m,k}\mleft( \xb - \tilde\xb \mright) \rho^k\mleft( t, \tilde\xb \mright) d\tilde\xb.
\end{equation}
\revb{The system \eqref{eq:system} is coupled by the nonlocal terms \eqref{eq:conv} and}
has first been studied in \cite{ACG15}.
Here, a general result for the existence of weak entropy solutions was shown using a Lax-Friedrichs-type numerical scheme.
The uniqueness of the weak entropy solutions was proven for specific modeling equations, e.g.\ \revc{\cite{RWGG20, GR24,CGL12,colombo2018nonlocal,CG25,aggarwal2025convergence}}.
\revc{More general results for the scalar case, i.e. $K=1$, can be found in the recent work \cite{gong2026existence}. 
Additionally in the scalar case, if $\fb^1$ is} linear in $\rho$, weak solutions are already unique as proven in \cite{keimer2018multi}.
We also refer to \cite{keimer2023nonlocal} for \reva{an} overview on nonlocal balance laws with unique weak solutions. \revc{Weaker assumptions on the kernel are studied in \cite{colombo2024multidimensional}.}

\revc{In two spatial dimensions the singular limit problem, i.e. the mollifier approaching a Dirac delta,
is still an open problem. 
In case of an additional viscosity term and for a scalar equation results can be found in \cite{colombo2019singular}. 
For an overview on the singular limit in the one-dimensional setting,
we refer to \cite{keimer2023nonlocal,colombo2023overview} and the references therein. 
}

Since their introduction in \cite{ACT15,ACG15,CGL12}, nonlocal conservation laws have a deep connection to approximate numerical solutions and in particular finite volume methods.
The main reason is that they are an effective tool to prove the existence of solutions via their limit. 
A usual choice are Lax-Friedrichs-type numerical schemes, e.g. \revc{\cite{ACG15,ACT15,blandin2016well,colombo2015nonlocal,aggarwal2016crowd,betancourt2011nonlocal,chiarello2018global}}, but in recent years other schemes such as Upwind or Godunov-type schemes \revc{\cite{aggarwal2025error,friedrich2018godunov,burger2023hilliges,RWGG20,chiarello2020non,friedrich2021nonlocal,chiarello2023existence}} became popular.
For one-dimensional equations, even more general results for complete classes of numerical schemes are available.
It was shown that with an accurate approximation of the nonlocal term the commonly known monotone schemes can be applied \cite{FSS23,aggarwal2024accuracy,huang2024asymptotic}.
In particular, \cite{FSS23} follows the idea of approximating the flux at each cell interface, such that the problem can be reduced to a 'local' flux, which allows to apply monotone numerical schemes. 
Although, it should be noted that due to the nonlocality the resulting schemes are not monotone in every argument such that the usual convergence arguments cannot be directly applied.

However, for two-dimensional systems of nonlocal conservation laws, typically numerical approximations based on Lax-Friedrichs-type numerical schemes are considered \revc{\cite{ACG15,CR19,BGIV20,goatin2025pedestrians,aggarwal2016crowd,colombo2015nonlocal}}.
Here, the difficulty arises when the flux is nonlinear with respect to the local variable $\rho^k$ as in pedestrian models.
If the flux is linear, more accurate numerical methods have already been studied.
The authors of \cite{keimer2018multi,keimer2023nonlocal} consider numerical schemes based on the characteristics (in the scalar case $K=1$). 
\revc{For a nonlinear scalar flux a semi-discrete Lagrangian–Eulerian scheme is studied in \cite{abreu2025semi}.} 
In addition, Roe-type schemes have been studied for the nonlocal material flow model (including the system case) in \cite{RWGG20,GGZ25}.
To the best of the authors' knowledge a general class of numerical schemes for the equation \eqref{eq:system}, based on monotone schemes and providing sufficient conditions for convergence, has not been studied yet.

Furthermore, error estimates in one spatial dimension have been investigated very recently in a series of papers \cite{aggarwal2024accuracy,aggarwal2024well,aggarwal2025error} for different nonlocal models.
In two-dimensions, the only known result follows from \cite{aggarwal2024well} via a splitting argument. 
However, the flux considered in \cite{aggarwal2024well} does not include most of the pedestrians models, since direct space and time dependencies are not present. 

During this work, we start from a semi-discrete approach, derive a first order scheme and concentrate on appropriate numerical flux functions.
Using these flux functions together with a more accurate approximation of the semi-discrete form allows for a straightforward extension to higher order numerical schemes.
In one spatial dimension higher order schemes for nonlocal conservation laws have been studied in \cite{friedrich2019maximum,chalons2018high} and in two-dimensions, e.g.\ using finite differences, in \cite{BGIV20,goatin2025pedestrians}, \revc{or, second-order schemes in \cite{manoj2026positivity}}. 

The goal of this work is threefold:  We will provide a rigorous study of \emph{i)} the general flux \eqref{eq:system}, \emph{ii)} sufficient assumptions on numerical schemes for convergence and \emph{iii)} the error estimates for the developed schemes.
\revc{Specifically, we analyze convergence and error estimates for a broad class of monotone-based numerical flux schemes, thereby extending previous results that focused on specific fluxes. 
Analogous to the one-dimensional case—under one additional technical assumption—we prove that any monotone numerical flux, applied to an approximated flux at the cell interfaces, guarantees convergence toward the weak entropy solutions of \eqref{eq:system}. 
This provides a rigorous justification for the use of the Godunov-type and Lax-Friedrichs fluxes derived in this work for nonlocal, nonlinear fluxes. 
Such approaches are rarely employed in pedestrian modeling, yet they result in a more accurate approximation of the solutions. 
Furthermore, we establish a uniqueness result for the space-time-dependent flux in \eqref{eq:system} which, to the authors' knowledge, has not yet been addressed in this level of generality.}

The structure of this work is thereby such that after introducing the most important definitions \revb{and} assumptions in Section \ref{sec:DefAss}, we present a class of numerical schemes for the nonlocal multidimensional conservation laws \eqref{eq:system} in Section \ref{sec:scheme}.
They are based on monotone numerical fluxes for (local) conservation laws.
After discussing suitable approximations of the nonlocal term we state general assumptions on the numerical flux function to ensure convergence to a weak entropy solution.
The remaining section is devoted to the convergence proof.
In Section \ref{sec:errorestimate}, we provide an error estimate for the numerical approximation of the weak entropy solution.
Here, we first examine a Kuznetsov-type lemma that provides uniqueness of the weak entropy solution of \eqref{eq:system}, too.
Then, we establish the convergence rate of the defined class of numerical schemes.
Section \ref{sec:experiments} presents convergence studies based on numerical examples.
Thereby, we compare the accuracy of different schemes that are included in the introduced class. 
We close this work by a conclusion in Section \ref{sec:conclusion}.

\section{Definitions and assumptions}\label{sec:DefAss}
In this section, we state the most important definitions and assumptions we will need in the following to guarantee the existence of solutions to \eqref{eq:system}.
Weak entropy solutions are thereby defined similar to \cite[Def. 2.1]{CGL12}.
\begin{definition}[Weak entropy solution]\label{Def:entropysol}
    Let $T>0$ and $\rhob_0 \in L^1\mleft( \R^2;\R^K \mright)$. A \textit{weak entropy solution} to \eqref{eq:system} is a function $\rhob \in \mathbf{C}^0\mleft( [0,T]; L^1\mleft( \R^2;\R^K \mright) \mright)\cap L^\infty\mleft([0,T];BV\mleft(\R;\R^K\mright)\mright)$ if, for $k=1,\ldots,K$ $\rho^k$ is a Kru\v{z}kov solution to the Cauchy problem 
    \begin{equation}
    \begin{cases}
        \begin{aligned}
            &\ddt \rho^k + \Div \fb^k\mleft( t, \xb, \rho^k, \contconv{\rhob} \mright)  = 0 \quad &&,(t, \xb) \in \R^+ \times \R^2,\\
            &\rho^k(0,\xb) = \rho^k_0(\xb)&&, \xb \in  \R^2. 
        \end{aligned}
    \end{cases}
  \end{equation}
  setting $\contconv{\rhob} =  \etab *  \rhob$, i.e.\
  \begin{align*}
    \int_{0}^{T} \int_{\R^2} \mleft[ \left| \rho^k - \kappa \right| \ddt \phi + \textnormal{sgn}(\rho^k - \kappa)\mleft( \fb^k\mleft( t, \xb, \rho^k, \contconv{\rhob} \mright) - \fb^k\mleft( t, \xb, \kappa, \contconv{\rhob} \mright)
    \mright) \mydot \nabla_\xb \phi \mright] d\xb \; dt \\
    - \int_{0}^{T} \int_{\R^2}  \textnormal{sgn}(\rho^k-\kappa) \Div \fb^k\mleft( t, \xb, \kappa, \contconv{\rhob} \mright)   \phi d\xb \; dt + \int_{\R^2} \left| \rho_0^k(\xb)-\kappa \right| \phi(0,\xb) d\xb \geq 0 
  .\end{align*}
  holds for all $\kappa \in \R$ and $\phi \in \mathbf{C}_c^\infty$\textnormal{([0,T) $\times \R^2$; $\R^+$)}.

\end{definition}
During this work, we consider the following set of assumptions on the involved input data which guarantees the existence \reva{and uniqueness} of entropy solutions.
\begin{assumption}\label{asm}
  We pose the following assumptions:
  \begin{enumerate}
        \item[$(\rhob_m)$] there exists $0\leq \rho_m^k \in \R$ such that $\fb^k(\cdot ,\cdot ,\rho_m^k,\cdot )=0$ and $\rho_m^k \leq  \rho_0^{k}$ for $k=1,\ldots,K$.
        \item[$(\etab)$] $\etab \in \mleft( \mathbf{C}^2 \cap \mathbf{W}^{2,\infty} \cap L^1 \mright)\mleft( \R^2; \R^{M \times K} \mright)  $. 
  \end{enumerate}
  For the next assumptions, we denote by $\imgrho_k$ the image space of $\rho^k$.
  In the general case we set ${\imgrho_k = [\rho_m^k,\infty)}$. 
  If there additionally exists $\rho_M^k \in \R$ such that $\fb^k(\cdot ,\cdot ,\rho_M^k,\cdot )=0$ and $\rho_0^{k} \leq \rho_M^k$ we set $\imgrho_k = [\rho_m^k, \rho_M^k]$. 
  We further assume:
  \begin{enumerate}
        \item[$(\rhob_0)$] $\rhob_0 \in (L^1 \cap L^\infty \cap BV)(\R^2;\R^K)$ and $\rho_0^k(\xb) \in \imgrho_k$ for $k=1,\ldots,K$.
        \item[$(\boldsymbol{f})$] $\fb^k \in \mleft( \mathbf{C}^2 \cap \mathbf{W}^{2,\infty} \mright)\mleft( \R^+ \times \R^2 \times \imgrho_k \times \R^M ; \R^2 \mright)$ 
        and with abuse of notation, the first- and second-order derivatives of $\fb^k$ with respect to $t,x_1,x_2$, as well as the gradient and Laplacian of $\Rb$, can be bounded by the following estimates $\mleft| \partial^\alpha_{t,x_1,x_2,\Rb}\fb^k(t,\xb,\rho,\Rb) \mright| \le \mathcal{M}\,|\rho|,$ for all multi-indices $\alpha$ with $|\alpha|=1,2$, $\rho \in \imgrho_k$ \revb{and a constant $\mathcal{M} \in \R^+$}.
  \end{enumerate}
\end{assumption}
Note that the assumptions are rather similar to \cite{ACG15}. 
Here, the postulated regularity on the involved functions only needs to hold on the respective domains for the state variables $\rho^k$, $k=1,\ldots,K$ denoted by $\imgrho_k$.
Later in Thm.~\ref{eq:maxprinciple1}, we show that if $\rho_0^{k} \in \imgrho_k$, $k=1,\ldots,K$, the solution remains in this set.
It is also possible to weaken the assumption $(\boldsymbol{f})$  as outlined in the next remarks.
\begin{remark}
  For compactly supported initial data and finite times the solution of \eqref{eq:system} is also compactly supported in space.
  Therefore, it is sufficient to satisfy the assumptions $(\boldsymbol{f})$ and $(\etab)$ for any compact subsets associated with the time space $\R^+$ and the spatial domain $\R^2$. 
  Similar to \cite[Rem.~2.1]{ACG15}, we can follow the proof of \cite[Thm.~2.2]{borsche2015differential},
  such that the existence of the upper bound $\mathcal{M}\,|\rho|$ on the derivatives is already implied and does not need to be assumed in this case.
\end{remark}
\begin{remark}
  Furthermore, the proofs outlined below can be applied under the weaker assumptions on the fluxes: $\fb^k$ is $C^1$ with respect to $\rho^k$, 
  the gradients with respect to $\Rb$ and $\xb$ are Lipschitz continuous with respect to $\rho^k$ 
  and the partial derivative with respect to $\rho^k$ is $C^1$ in all other variables. 
  For simplicity, we chose $\fb^k$ to be $C^2$ with respect to $\rho^k$.
\end{remark}

Further, assumption $(\boldsymbol{f})$ implies that $\fb$ and its first order derivatives are Lipschitz continuous in all arguments.
We denote the corresponding constants, for instance, by $\Lipf{\rho}{x_1} \coloneq \max_{k \in  K} \| \partial^2_{\rho,x_1} \fb^k \|_{\infty}$ and analogously for all other derivatives.
In general, $L^p$-norms, $1\leq p\leq \infty$, of vectorial functions are defined componentwise, i.e.\ \revb{$\norm{g}_{L^p}\coloneq \sum_{m=1}^{\widetilde{M}} \norm{g_m}_{L^p}$ for $g \in L^p(\R^2;\R^{\widetilde{M}})$ with $\widetilde{M} \in \N$}.
In addition, for $a,b\in\R^K$ $a\,\mydot\, b$ denotes the standard scalar product of $a$ and $b$.

\section{A class of monotone-based first order schemes}\label{sec:scheme}
We discretize equidistantly in space and time. In space, the step sizes are $\dx$ and $\dy$, which correspond to the two dimensions, such that we have rectangular cells $C_{i,j}=[x_1^{i-\frac{1}{2}},x_1^{i+\frac{1}{2}}) \times [x_2^{j-\frac{1}{2}}, x_2^{j+\frac{1}{2}} )$ with centered nodes $\xb^{i,j} = (x_1^i, x_2^j) = (i\dx, j\dy)$, $i,j \in \Z$.
Additionally, the time is discretized with step size $\dt$, such that $t^n = n \dt$ with $n \in \N$.
We define the following piecewise constant finite volume approximation
$$
\rho_\Delta^k\mleft( t, \xb \mright) = \sum_{i,j \in \Z} \rho_{i,j}^{k,n} \; \mathlarger{\chi}_{[t^n, t^{n+1}) \times C_{i,j}}   \mleft( t, \xb \mright)
$$
for $k=1,\ldots,K$.
Here, the cell-averages $\rho_{i,j}^{k,n}$ over the cells $C_{i,j}$ at time $t^n$ are similarly defined as the initial data which is given for $k=1,\ldots,K$ by 
\begin{equation}\label{eq:rho0_L1}
  \rho_{i,j}^{k,0}= \frac{1}{\dx \dy} \int_{C_{i,j}} \rho_0^k\mleft( \xb \mright) d\xb.
\end{equation}
We start by describing the approximation of the nonlocal term.
For a fixed time $t^n$, the nonlocal terms $\etab * \rhob $ are approximated at a cell interface $\xb^{i+\frac{1}{2},j}$ or $\xb^{i,j+\frac{1}{2}}$ and denoted by $\Rb_{i+\frac12,j}^n$ and $\Rb_{i,j+\frac12}^n$, respectively.
Since the components of $ \etab * \rhob$ consist of different convolutions, i.e.\ $\mleft( \etab * \rhob  \mright)_m \mleft( t, \xb \mright) = \sum_{k=1}^{K} \mleft( \eta^{m,k}* \rho^k\mleft(t  \mright) \mright) \mleft(\xb \mright)$, $m=1,\ldots, M$, every convolution is approximated by composite midpoint rule
\begin{subequations}
\mathtoolsset{showonlyrefs=false}
\begin{align}\label{eq:Ri}
\mleft( \eta^{m,k} * \rho_\Delta^k(t^n) \mright) \mleft( \xb^{i+\frac{1}{2},j} \mright) 
    &\approx \dx \dy \sum_{p, q \in \Z} \eta^{m,k} \mleft( (p+{\textstyle \frac{1}{2}})\dx, q \dy  \mright) \rho^{k,n}_{i-p,j-q} \eqdef \mleft( \Rb_{i+\frac12,j}^n \mright)_m, 
\end{align}
\vspace{-\baselineskip}
\vspace{-\baselineskip}
\begin{align}
  \mleft( \eta^{m,k} * \rho_\Delta^k(t^n) \mright) \mleft( \xb^{i,j+\frac{1}{2}} \mright) &\approx \dx \dy \sum_{p, q \in \Z} \eta^{m,k} \mleft(p \dx,  (q+{\textstyle \frac{1}{2}}) \dy \mright) \rho^{k,n}_{i-p,j-q} \eqdef \mleft( \Rb_{i,j+\frac12}^n \mright)_m.\label{eq:Rj}
\end{align}
\end{subequations}
By this approximation the flux of \eqref{eq:system} reduces to $\widetilde{f}^k_1( \rho^k; t^n, \reva{\xb^{i+\frac{1}{2},j}} ) = f_1^k( t^n, \reva{\xb^{i+\frac{1}{2},j}}, \rho^k, \Rb_{i+\frac12,j}^n )$ in the first component, which we denote by $\redfluxk{1}{i+\frac{1}{2}}{j}{\rho^k}$ and analogously for the second component.
It is important to note that at the cell interface $\xb^{i+\frac{1}{2},j}$ or $\xb^{i,j+\frac{1}{2}}$ the flux is continuous in $\xb$ and the same holds for the convolutions due to the regularity assumptions on the kernel. 
The only discontinuities arise from the state variable $\rho^k$.

  \begin{remark}[Decoupling of the system]
  Since the system \eqref{eq:system} is coupled solely by the convolution terms, the system based on the reduced fluxes $\widetilde{\fb}^k$, $k=1,\ldots,K$ 
  is decoupled at a fixed time $t^n$. 
  Hence, the solution for the next time step can be computed based on the reduced fluxes by a finite volume scheme for every equation independently.
  For simplicity and to improve readability, we will often omit the superscript $k$ in the following proofs.
  \end{remark}

Now, a first order finite volume scheme for \eqref{eq:system} can be similarly derived to the local case:
Integrating \eqref{eq:system} over the cells $C_{i,j}$ and dividing by $\dx \dy$ provides a semi-discretization for the cell-averages. 
Approximating the integral of the flux term over the boundaries by the midpoint rule
and using a forward Euler step in time while discretizing the convolution terms as in \eqref{eq:Ri} and \eqref{eq:Rj}, results in a first order scheme given by 
\begin{equation}\label{eq:1stscheme}
  \begin{aligned}
        \rho_{i,j}^{k,n+1} = \rho_{i,j}^{k,n} &- \lambda_1 \mleft(\numfluxkn{1}{i+\frac{1}{2}}{j}{\rho_{i,j}}{\rho_{i+1,j}} 
        - \numfluxkn{1}{i-\frac{1}{2}}{j}{\rho_{i-1,j}}{\rho_{i,j}}\mright) 
        \\& -\lambda_2 \mleft(\numfluxkn{2}{i}{j+\frac{1}{2}}{\rho_{i,j}}{\rho_{i,j+1}} - \numfluxkn{2}{i}{j-\frac{1}{2}}{\rho_{i,j-1}}{\rho_{i,j}} \mright) 
    \end{aligned}
\end{equation}
with $\lambda_1 = \frac{\dt}{\dx}$ and $\lambda_2 = \frac{\dt}{\dy}$ and suitable numerical flux functions $F_1$ and $F_2$ which we specify in the following.
We note that from a semi-discretization higher order schemes can be derived in a similar fashion using appropriate approximations of the integrals over the cell boundaries and of the time integration. The resulting schemes do not use dimensional splitting as well and require consistent numerical fluxes, as considered in this work. 

The assumptions $(\fb)$ and $(\etab)$ concerning the regularity in space of the flux and nonlocal term allow us to extend the ideas in the one-dimensional case from \cite{FSS23}.
Similarly, we now pose some general assumptions on the numerical flux functions which guarantee the convergence towards weak entropy solutions of \eqref{eq:system}. 
Although the schemes are not monotone in every argument due to the nonlocal terms in \eqref{eq:Ri} and \eqref{eq:Rj}, they are based on and very similar to classical local \textit{monotone flux schemes} \cite[Def. 21.1]{EGH00}.
  \begin{definition}[\revb{Monotone}-based numerical flux function]\label{def:flux}
    Let the nonlocal terms be approximated as in \eqref{eq:Ri} and \eqref{eq:Rj} resulting in the approximate fluxes $\redfluxk{1}{i+\frac{1}{2}}{j}{\rho^k}\coloneq f_1^k\mleft( t^n, \xb_{i+\frac{1}{2},j}, \rho^k, \Rb_{i+\frac12,j}^n \mright) $ and 
    $\redfluxk{2}{i}{j+\frac12}{\rho^k}$ $\coloneq f_2^k\mleft( t^n, \xb_{i,j+\frac12}, \rho^k, \Rb_{i,j+\frac12}^n \mright)$.
    The numerical flux functions $\numfluxkn{1}{i+\frac{1}{2}}{j}{}{}, \numfluxkn{2}{i}{j+\frac{1}{2}}{}{}$, $i,j \in \Z, n \in \N, k=1,\ldots,K$ satisfy the following conditions
    \begin{enumerate}
        \item Consistency, i.e.\ for $\rho \in \imgrho_k$
        \begin{align*}
          \numflux{1,k}{i+\frac{1}{2}}{j}{\rho}{\rho} = \redfluxk{1}{i+\frac{1}{2}}{j}{\rho},\quad\numflux{2,k}{i}{j+\frac{1}{2}}{\rho}{\rho} = \redfluxk{2}{i}{j+\frac{1}{2}}{\rho}
        .\end{align*}
        \item The maps $\mleft( a,b \mright) \mapsto \numflux{1,k}{i+\frac{1}{2}}{j}{a}{b}$ and $\mleft( a,b \mright) \mapsto \numflux{2,k}{i}{j+\frac{1}{2}}{a}{b}$ from $\imgrho_k \times \imgrho_k$ to $\R$ are nondecreasing in the first argument and nonincreasing in the second. 
        \item Lipschitz continuity in the two arguments with Lipschitz constants
        $L_{1,1},L_{1,2}>0$ for $\numfluxkn{1}{i+\frac{1}{2}}{j}{}{}$ and $L_{2,1},L_{2,2} >0$ for $\numfluxkn{2}{i}{j+\frac{1}{2}}{}{}$.
        \item There exist constants $L_{1,1}', L_{1,2}', L_{2,1}', L_{2,2}' >0 $ such that for $\rho_{i-1,j}^{k,n}, \rho_{i,j}^{k,n}, \rho_{i,j-1}^{k,n}  \in  \imgrho_k$
        \begin{align*}
          \left| \numfluxkn{1}{i-\frac{1}{2}}{j+1}{\rho_{i-1,j}}{\rho_{i,j}}
                - \numfluxkn{1}{i-\frac{1}{2}}{j+1}{\rho_{i,j}}{\rho_{i,j}}
                -  \numfluxkn{1}{i-\frac{1}{2}}{j}{\rho_{i-1,j}}{\rho_{i,j}} 
                + \numfluxkn{1}{i-\frac{1}{2}}{j}{\rho_{i,j}}{\rho_{i,j}} \right|\\
                 \leq (\dy \left| \rho_{i,j}^n - \rho_{i-1,j}^n \right| + \mathcal{M} |\rho_{i,j}^n| \dy^2) L_{1,1}'\\
          \left| \numfluxkn{2}{i+1}{j-\frac{1}{2}}{\rho_{i,j-1}}{\rho_{i,j}}
                - \numfluxkn{2}{i+1}{j-\frac{1}{2}}{\rho_{i,j}}{\rho_{i,j}}
                -  \numfluxkn{2}{i}{j-\frac{1}{2}}{\rho_{i,j-1}}{\rho_{i,j}} 
                + \numfluxkn{2}{i}{j-\frac{1}{2}}{\rho_{i,j}}{\rho_{i,j}} \right|\\
                 \leq (\dx \left| \rho_{i,j}^n - \rho_{i,j-1}^n \right|+ \mathcal{M} |\rho_{i,j}^n| \dx^2) L_{2,1}'
        \end{align*}
        and analogously for changes in the second component of $F_1, F_2$.

    \end{enumerate}
  \end{definition}

\begin{remark}
  The $\dx^2$ and $\dy^2$ in \revb{condition} 4 arise from the difference in space and the nonlocal term. A suitable approximation of the latter is needed, as e.g.\ in \eqref{eq:Ri} and \eqref{eq:Rj}.
\end{remark}

  To prove convergence of the numerical solution to a weak entropy solution of \eqref{eq:system} we require the following CFL condition 
\begin{equation}\label{eq:CFL}
  \lambda_1 := \frac{\dt}{\dx} \leq \frac{1}{2 \mleft( L_{1,1} + L_{1,2} \mright)}, \quad \lambda_2 := \frac{\dt}{\dy} \leq \frac{1}{2 \mleft( L_{2,1} + L_{2,2} \mright)}.
\end{equation}
The approach presented above in Def.~\ref{def:flux} is applicable to the one-dimensional case as well.
Therefore, it generalizes the approaches in \cite{FSS23}, which only considered a specific choice for the flux function.

Before we present our main result, we give an example of numerical flux functions that fits into the framework of the monotone-based schemes proposed in Def.~\ref{def:flux}, i.e.\ the Lax-Friedrichs-type scheme as introduced in \cite{ACG15}
\begin{align}\label{eq:LxF}
    \numfluxkn{1}{i+\frac{1}{2}}{j}{\rho_{i,j}}{\rho_{i+1,j}} = 
      \frac{1}{2}\Bigl( \redfluxk{1}{i+\frac{1}{2}}{j}{\rho^{\reva{n}}_{i,j}} + \redfluxk{1}{i+\frac{1}{2}}{j}{\rho_{i+1,j}^n} \Bigr) - \frac{\alpha}{2} \mleft( \rho_{i+1,j}^n - \rho_{i,j}^n \mright)
  \end{align}
  with $\alpha \geq \sup_{\rho \in \imgrho_{\revb{k}}} |\partial_\rho \redfluxk{1}{i+\frac{1}{2}}{j}{\rho}|$ in the first component and analogously for the second component in the $x_2$-direction. 
  This scheme satifies assumption~4 of Def.~\ref{def:flux}, which is shown in the appendix.
  We note that in \cite{CR19} another Lax-Friedrichs-type scheme for a particular flux function is considered approximating the convolution at the cell centers instead of at the cell interfaces.

Further schemes can be derived for specific flux functions.
In particular, in many applications of nonlocal conservation laws, e.g.\ pedestrian \cite{ACG15,CR19,BGIV20,goatin2025pedestrians,GR24,CGL12,colombo2018nonlocal} or material flow \cite{GGZ25}, the flux function is multiplicative,
i.e.\ it can be written as $f^k\mleft( t,\xb, \rho^k, \etab * \rhob \mright) =g^k\mleft( \rho^k \mright)  \nub^k\mleft( t,\xb,\etab * \rhob \mright)$. 
In this case we obtain a suitable class that fits into the framework of Def.~\ref{def:flux}. In particular, assumption~4 of Def.~\ref{def:flux} holds immediately according to Prop.~\ref{prop:dblLipmult}.

\begin{remark}[Multiplicative flux functions]\label{rem:multflux}
If $\reva{\fb}^k\mleft( t,\xb, \rho^k, \etab * \rhob \mright) =g^k\mleft( \rho^k \mright)  \nub^k\mleft( t,\xb,\etab * \rhob \mright)$ holds (with appropriate assumption on $g^k$ and $\nub^k$),
we can, after approximating the convolutions at $( t^n, \xb^{i+\frac{1}{2},j})$, see \eqref{eq:Ri}, evaluate $\nu_1^k$ and denote 
$V_{i+\frac{1}{2},j}^{1,k,n} = \nu_1^k\mleft( t^n,\xb^{i+\frac{1}{2},j}, \Rb_{i+\frac{1}{2},j} \mright) $ to consider the reduced flux $\widetilde{f}^k_1\mleft( \rho^k; t^n, \xb_{i+\frac{1}{2},j} \mright)$ ${= g^k(\rho^k) V_{i+\frac{1}{2},j}^{1,k,n}}$. 
This allows us to make a specific approach on the numerical flux, namely to split up the reduced flux into the absolute value $\left|V_{i+\frac{1}{2},j}^{1,k,n}\right|$ and $g^k(\rho^k) \textnormal{sgn}\mleft( V_{i+\frac{1}{2},j}^{1,k,n} \mright)$ to only apply a numerical flux function $G_1^k$ on the latter. Thus, we obtain 
\begin{equation}\label{eq:F=GV}
      F_1^k\mleft( a,b, V_{i+\frac{1}{2},j}^{1,k,n} \mright) \defeq G_1^k\mleft( a,b; \textnormal{sgn}\mleft( V_{i+\frac{1}{2},j}^{1,k,n} \mright) \mright) \left| V_{i+\frac{1}{2},j}^{1,k,n} \right|  
\end{equation}
requiring the same assumptions as in Def.~\ref{def:flux}. In this case, \revb{condition}~4 is always satisfied, see Prop.~\ref{prop:dblLipmult} in the appendix. Again, the analogous holds for the $x_2$-direction. This approach provides us with a new Lax-Friedrichs-type scheme
\begin{equation}\label{eq:LxFnew}
  \begin{aligned}
  F_1^k\mleft( \rho_{i,j}^n,\rho_{i+1,j}^n, V_{i+\frac{1}{2},j}^{1,k,n}\mright) &= G_1\mleft( \rho_{i,j}^n,\rho_{i+1,j}^n, \textnormal{sgn}(V_{i+\frac{1}{2},j}^{1,k,n})\mright) \left| V_{i+\frac{1}{2},j}^{1,k,n} \right| \\
  & \hspace{-2cm}= \frac{1}{2}\Bigl( \mleft( g^{\reva{k}}( \rho_{i,j}^n) \textnormal{sgn}(V_{i+\frac{1}{2},j}^{1,k,n}) + g^{\reva{k}}(\rho_{i+1,j}^n) \textnormal{sgn}(V_{i+\frac{1}{2},j}^{1,k,n}) \mright) - \alpha \mleft( \rho_{i+1,j}^n - \rho_{i,j}^n \mright) \Bigr) \left| V_{i+\frac{1}{2},j}^{1,k,n} \right|
\end{aligned}
\end{equation}
with $\alpha \geq \sup_{\rho\in \imgrho_k} \left|  \reva{(g^k)}'(\rho) \right| $. 
Moreover, a Godunov-type scheme is then given by
\begin{align}\label{eq:God}
  F_1^k\mleft( \rho_{i,j}^n,\rho_{i+1,j}^n, V_{i+\frac{1}{2},j}^{1,k,n}\mright) = g^\reva{k}( \rho^*(\rho_{i,j}^n,\rho_{i+1,j}^n)) V_{i+\frac{1}{2},j}^{1,k,n} 
\end{align}
where $\rho^*(\rho_{i,j}^n,\rho_{i+1,j}^n)$ is the solution to the one-dimensional Riemann problem with the flux function $\rho \mapsto g^k(\rho) \textnormal{sgn}\mleft( V_{i+\frac{1}{2},j}^{1,k,n} \mright)$. 
\end{remark}

Now, we state our \reva{first} main result.

\begin{theorem}\label{thm:mainres}
  Let the Asm.~\ref{asm} hold and $\rhob_0 \in \textnormal{BV}(\R^2) \cap L^\infty(\R^2)$.
  Then a monotone-based numerical scheme \eqref{eq:1stscheme} with a flux function from Def.~\ref{def:flux} converges for $\dx, \dy, \dt \to 0$ in $\left(L_{loc}^{1}\mleft( \R^2 \mright)\right)^K$ to the unique weak entropy solution $\rhob= \rhob(t,\xb) \in C^0(\R^+; L^1(\R^2;\R^K))$ of \eqref{eq:system} as in Def.~\ref{Def:entropysol}, where $\dx = \delta \dy$ \revb{for a fixed $\delta \in \R^+$} and $\dt$ bounded by the CFL condition \eqref{eq:CFL}. 
  Moreover this solution satisfies $\rho^k \in \imgrho_k,$ ${k=1,\ldots,K}$.
\end{theorem}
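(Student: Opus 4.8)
The plan is to establish convergence of the monotone-based scheme by the classical compactness-plus-entropy route, adapted to the nonlocal system. The overall strategy is: (i) derive uniform a priori bounds on the numerical approximations ($L^\infty$ via a maximum principle, $BV$ in space, and time-continuity in $L^1$), (ii) extract a convergent subsequence via Helly's theorem, (iii) pass to the limit in a discrete entropy inequality to show the limit is a weak entropy solution, and (iv) invoke the Kuznetsov-type uniqueness result (announced for Section~\ref{sec:errorestimate}) to upgrade subsequential convergence to full convergence and to identify the limit as \emph{the} unique solution.

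\textbf{Uniform bounds.} First I would prove the invariant-region property $\rho_{i,j}^{k,n}\in\imgrho_k$ (this is the content of Thm.~\ref{eq:maxprinciple1} referenced in the text). The key observation is the \emph{decoupling remark}: at each fixed time $t^n$ the convolution values $\Rb^n$ are frozen, so each equation $k$ is updated by a genuinely \emph{local} monotone three-point scheme in each direction. Because $F$ is nondecreasing in the first and nonincreasing in the second argument (Def.~\ref{def:flux}(2)) and the CFL condition \eqref{eq:CFL} holds, the update map $\rho_{i,j}^{k,n+1}$ is a nondecreasing function of each of its five arguments $\rho_{i,j}^{k,n},\rho_{i\pm1,j}^{k,n},\rho_{i,j\pm1}^{k,n}$; combined with consistency (Def.~\ref{def:flux}(1)) and the vanishing-flux assumption $(\rhob_m)$ at $\rho_m^k$ (and $\rho_M^k$), this gives the maximum principle $\rho_m^k\le\rho_{i,j}^{k,n+1}\le\rho_M^k$ by the standard comparison with constant states. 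The $L^\infty$ bound then feeds back to bound $\Rb^n$ uniformly via $\etab\in L^1\cap L^\infty$ from $(\etab)$. For the $BV$ estimate, I would difference the scheme between neighboring cells and track the growth of $\sum|\rho_{i+1,j}^{k,n}-\rho_{i,j}^{k,n}|$ and the transverse analogue; here assumption~4 of Def.~\ref{def:flux} is precisely the quantitative device that controls the \emph{extra} terms arising because the numerical flux depends on the spatial position (through $\Rb$ and the explicit $\xb$-dependence), producing a Gronwall-type inequality with the $\mathcal{M}|\rho|\,\dy^2$ and $\mathcal{M}|\rho|\,\dx^2$ source terms contributing an $O(T)$ growth that stays uniform in the mesh. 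Finally, the scheme itself gives $L^1$-continuity in time: $\|\rho_\Delta^k(t^{n+1})-\rho_\Delta^k(t^n)\|_{L^1}\lesssim \dt\cdot\mathrm{TV}$, yielding equicontinuity in time.

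\textbf{Compactness and the entropy inequality.} With uniform $L^\infty$, spatial $BV$, and time-equicontinuity in hand, a two-dimensional Helly/Kolmogorov argument extracts a subsequence converging in $C^0([0,T];L^1_{loc})$ to some limit $\rhob$ with $\rho^k\in\imgrho_k$. The crucial convergence of the nonlocal term follows because the midpoint-rule approximations \eqref{eq:Ri}, \eqref{eq:Rj} of the convolution converge to $\etab*\rhob$ uniformly, using the $\mathbf{W}^{2,\infty}$ regularity of $\etab$ to bound the quadrature error by $O(\dx^2+\dy^2)$ together with the $L^1$-convergence of $\rho_\Delta^k$. To show $\rhob$ is a weak entropy solution, I would derive a discrete Kru\v{z}kov entropy inequality: for monotone fluxes the numerical entropy flux $\entropyfluxkn{1}{i+\frac12}{j}{a}{b}$ built from $|\cdot-\kappa|$ satisfies the discrete cell inequality in the standard way, and multiplying by a test function $\phi\ge0$, summing, and using Abel summation recovers, in the limit, the continuous entropy inequality of Def.~\ref{Def:entropysol}, including the source term $-\int\mathrm{sgn}(\rho^k-\kappa)\,\Div\fb^k(t,\xb,\kappa,\contconv{\rhob})\,\phi$ that comes from the explicit $(t,\xb,\Rb)$-dependence.

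\textbf{The main obstacle.} I expect the hardest step to be the $BV$ estimate rather than the compactness or entropy passage. Unlike the local case, the scheme is \emph{not} monotone in every argument, and differencing introduces cross terms in which the numerical flux is evaluated at two different spatial nodes (hence two different convolution values $\Rb$). Assumption~4 of Def.~\ref{def:flux} is tailored to bound exactly these mismatched-node differences, but weaving these into a closed Gronwall inequality — while simultaneously controlling the transverse total variation and keeping the $\mathcal{M}|\rho|$ source contributions uniform in the mesh — is the delicate bookkeeping core of the argument. Once uniform $BV$ is secured, the remaining pieces (compactness, entropy consistency, and the appeal to the Kuznetsov-type uniqueness lemma of Section~\ref{sec:errorestimate} to turn the subsequence limit into full convergence to the unique entropy solution) follow the now-standard template, with the only genuinely nonlocal subtlety being the uniform convergence of the quadrature for $\etab*\rhob$.
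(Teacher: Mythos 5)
Your route coincides with the paper's own proof in every structural step (maximum principle by comparison with the constant states, BV estimate via differencing with assumption~4 of Def.~\ref{def:flux} controlling the mismatched-interface terms, $L^1$ time continuity, compactness, discrete entropy inequality plus a Lax--Wendroff passage, and the Kuznetsov-type uniqueness result to upgrade to full convergence). However, there is one genuine gap: your uniform $L^\infty$ bound is obtained \emph{only} from the invariant region $[\rho_m^k,\rho_M^k]$, i.e.\ it presupposes the existence of an upper state $\rho_M^k$ with $\fb^k(\cdot,\cdot,\rho_M^k,\cdot)=0$. The theorem also covers the general case $\imgrho_k=[\rho_m^k,\infty)$, where the maximum principle (Thm.~\ref{eq:maxprinciple1}) yields only the lower bound $\rho_{i,j}^{k,n}\geq\rho_m^k$. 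In two space dimensions a spatial BV bound does \emph{not} imply an $L^\infty$ bound, so your TV estimate cannot substitute for it. Without a mesh-uniform sup-norm bound you cannot conclude that the limit lies in $L^\infty$, and hence you cannot place it in the solution class of Def.~\ref{Def:entropysol} nor in the comparison class $\mathcal{U}$ of the Kuznetsov-type Lem.~\ref{lem:Kruznetsov} (which requires $u(t,\cdot)\in (L^\infty\cap BV)(\R^2;\R^K)$); the final uniqueness/whole-sequence step of your argument therefore breaks down precisely in the unbounded-image case.

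The paper closes this hole with a separate estimate, Lem.~\ref{lemma:Linf}: writing the update as $\rho_{i,j}^{n+1}=H_{i,j}^{1,n}+H_{i,j}^{2,n}$ and using the incremental-coefficient form \eqref{eq:H1ij} together with the flux-difference bound \eqref{eq:fdiffi}, one gets $|H_{i,j}^{\ell,n}|\leq \bigl(\tfrac12+\dt\,\mathcal{C}\bigr)\norm{\rho_\Delta^k(t^n,\cdot)}_\infty$, and hence $\norm{\rho_\Delta^k(t^n,\cdot)}_\infty\leq \exp(\mathcal{K}_5 t^n)\norm{\rhob_0}_\infty$ --- an exponentially growing but mesh-uniform bound, which is exactly what is needed to run the uniqueness argument on any finite horizon $[0,T]$. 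If you add this lemma (or restrict your claim to the case where $\rho_M^k$ exists), the rest of your proposal is a faithful reproduction of the paper's proof; note also that the paper emphasizes, as you correctly do, that the monotonicity used in the maximum principle is only monotonicity at frozen nonlocal values $\Rb^n$, since the full scheme is not monotone in every argument.
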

We close this section with the following remark.
\begin{remark}[Dimensional splitting]
  The scheme in \eqref{eq:1stscheme} works without dimensional splitting to make it extendable to higher order schemes.
  Instead, using dimensional splitting permits CFL numbers twice that of \eqref{eq:1stscheme}.  
  Using the monotone-based flux functions from Def.~\ref{def:flux}, Thm.~\ref{thm:mainres} can easily be adapted to a dimensional splitting approach.
\end{remark} 

\subsection{Convergence proof}
The proof of Thm.~\ref{thm:mainres} is divided in several sub-results which we will establish below.
\begin{theorem}[Maximum principle]\label{eq:maxprinciple1}
  Let the Asm.~\ref{asm} hold.
  Then a solution computed by a numerical scheme \eqref{eq:1stscheme} satisfying Def.~\ref{def:flux} and the CFL condition \eqref{eq:CFL} remains in the set $\imgrho_k$, i.e. 
  \begin{equation}
    \rho_{i,j}^{k,n} \geq \rho_m^k \geq 0, \quad \forall i,j \in \Z, n \in \N, k\in \{1,\dots,K\},
  \end{equation}
  and if there additionally exists $\rho_M^k \in \R$ with $\fb^k(\cdot ,\cdot ,\rho_M^k,\cdot )=0$ and $\rho_0^{k} \leq \rho_M^k $, then
  \begin{equation}
    \rho_{i,j}^{k,n} \leq \rho_M^k, \quad \forall i,j \in \Z, n \in \N, k\in \{1,\dots,K\}.
  \end{equation}
\end{theorem}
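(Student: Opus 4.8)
The plan is to argue by induction on the time level $n$, showing that the single-step update of \eqref{eq:1stscheme} preserves the bounds defining $\imgrho_k$. For the base case $n=0$, the cell averages \eqref{eq:rho0_L1} are spatial means of $\rho_0^k$, which lies in $\imgrho_k$ a.e.\ by $(\rhob_0)$; since $\imgrho_k$ is an interval, averaging preserves membership, so $\rho_{i,j}^{k,0}\in\imgrho_k$ for all $i,j$. For the inductive step I would freeze the nonlocal reconstructions $\Rb_{i\pm\frac12,j}^n$ and $\Rb_{i,j\pm\frac12}^n$ — which are fully determined by the known data $\rho^{k,n}$ — and regard the scheme as a map
$$\rho_{i,j}^{k,n+1} = H\bigl( \rho_{i-1,j}^{k,n}, \rho_{i,j}^{k,n}, \rho_{i+1,j}^{k,n}, \rho_{i,j-1}^{k,n}, \rho_{i,j+1}^{k,n} \bigr).$$

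The first key step is to show that $H$ is nondecreasing in each of its five arguments. The dependence on the four neighbours is monotone because property~2 of Def.~\ref{def:flux} makes $F_1,F_2$ nondecreasing in the first slot and nonincreasing in the second, and these enter \eqref{eq:1stscheme} with the favourable signs. For the central argument, differentiating (or, more carefully, forming difference quotients and using the a.e.\ differentiability of the Lipschitz fluxes) yields the lower bound $1 - \lambda_1(L_{1,1}+L_{1,2}) - \lambda_2(L_{2,1}+L_{2,2})$ for $\partial_{\rho_{i,j}} H$, which is nonnegative precisely under the CFL condition \eqref{eq:CFL} together with the Lipschitz bounds of property~3.

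The second key step is to exhibit the constant states $\rho_m^k$ and (when it exists) $\rho_M^k$ as fixed points of $H$. By $(\rhob_m)$ we have $\fb^k(\cdot,\cdot,\rho_m^k,\cdot)=0$ for \emph{every} value of the nonlocal argument, so the reduced fluxes $\flux{1}{i+\frac12}{j}{\rho_m^k}$ and $\flux{2}{i}{j+\frac12}{\rho_m^k}$ vanish, and by consistency (property~1) so do $\numflux{1,k}{i+\frac12}{j}{\rho_m^k}{\rho_m^k}$ and $\numflux{2,k}{i}{j+\frac12}{\rho_m^k}{\rho_m^k}$. Hence every flux difference in \eqref{eq:1stscheme} cancels and $H(\rho_m^k,\dots,\rho_m^k)=\rho_m^k$; the identical computation at $\rho_M^k$ gives $H(\rho_M^k,\dots,\rho_M^k)=\rho_M^k$. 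Combining monotonicity with the inductive hypothesis $\rho_{\bullet,\bullet}^{k,n}\geq\rho_m^k$ gives $\rho_{i,j}^{k,n+1}=H(\dots)\geq H(\rho_m^k,\dots,\rho_m^k)=\rho_m^k$, and symmetrically $\rho_{i,j}^{k,n+1}\leq\rho_M^k$ whenever $\rho_M^k$ is available; the nonnegativity $\rho_m^k\geq 0$ is itself part of $(\rhob_m)$.

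I expect the main obstacle to be conceptual rather than computational: since $\Rb$ depends on the full solution vector, the scheme is genuinely \emph{not} monotone as a map of all grid values (as noted after Def.~\ref{def:flux}), so a global comparison principle is unavailable. The resolution I would emphasise is that the maximum principle only needs monotonicity in the local arguments with $\Rb$ held fixed, and that the extremal states are flux-null for every value of $\Rb$; it is exactly the uniformity of $\fb^k(\cdot,\cdot,\rho_m^k,\cdot)=0$ (resp.\ at $\rho_M^k$) in the nonlocal slot that lets the frozen-$\Rb$ comparison close despite the coupling. A minor technical point to treat with care is justifying the derivative estimates via difference quotients, given that the numerical fluxes are only assumed Lipschitz rather than $C^1$.
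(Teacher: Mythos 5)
Your proof is correct and takes essentially the same route as the paper: an induction in which monotonicity in the neighbour slots (property~2 of Def.~\ref{def:flux}), the Lipschitz bounds together with the CFL condition \eqref{eq:CFL} for the central slot, and the consistency/flux-null property at $\rho_m^k$ (resp.\ $\rho_M^k$) --- uniform in the frozen nonlocal argument --- combine to give a comparison with the constant state. The paper writes these same ingredients as a direct inequality chain (replace the neighbours by $\rho_m^k$ via monotonicity, insert the vanishing terms $F(\rho_m^k,\rho_m^k)=0$, then apply Lipschitz continuity and the CFL bound) rather than packaging them as monotonicity of a five-argument update map with $\rho_m^k$, $\rho_M^k$ as fixed points, but the mathematical content is identical.
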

\begin{proof}
  We prove that $\rho_{i,j}^{k,n} \geq \rho_m^k, i,j \in \Z, n \in \N$ by induction and for the sake of simplicity we omit the superscript $k$. 
  For $n=0$ it holds by the approximation of the initial data in \eqref{eq:rho0_L1}.
  Suppose it is true for a fixed $n\in \N$. Due to the monotonicity in the first two arguments we have
  \begin{align*}
    \numflux{1}{i+\frac{1}{2}}{j}{\rho_{i,j}^n}{\rho_{i+1,j}^n}
    - \numflux{1}{i-\frac{1}{2}}{j}{\rho_{i-1,j}^n}{\rho_{i,j}^n}
    \leq \numflux{1}{i+\frac{1}{2}}{j}{\rho_{i,j}^n}{\rho_m} 
    - \numflux{1}{i-\frac{1}{2}}{j}{\rho_m}{\rho_{i,j}^n} 
  .\end{align*}
  Based on the assumption $\fb(\cdot ,\cdot ,\rho_m,\cdot )=0$ and the consistency of the numerical flux we obtain that $ {\numflux{1}{i\pm \frac{1}{2}}{j}{\rho_m}{\rho_m}=0}$. Thus, these terms can be added to exploit the Lipschitz continuity 
  \begin{align*} 
     &\numflux{1}{i+\frac{1}{2}}{j}{\rho_{i,j}^n}{\rho_m} 
    - \numflux{1}{i-\frac{1}{2}}{j}{\rho_m}{\rho_{i,j}^n}\\
     \leq & \left| \numflux{1}{i+\frac{1}{2}}{j}{\rho_{i,j}^n}{\rho_m} 
    - \numflux{1}{i+\frac{1}{2}}{j}{\rho_m}{\rho_m}\right| + \left| \numflux{1}{i-\frac{1}{2}}{j}{\rho_m}{\rho_m} 
    - \numflux{1}{i-\frac{1}{2}}{j}{\rho_m}{\rho_{i,j}^n}\right| \\
     \leq &\mleft( L_{1,1} + L_{1,2} \mright)  \left| \rho_{i,j}^n - \rho_m \right| = \mleft( L_{1,1} + L_{1,2} \mright)  \mleft( \rho_{i,j}^n - \rho_m \mright).
  \end{align*}
  Similarly, for the $x_2$-direction
  $$ \numflux{2}{i}{j+\frac{1}{2}}{\rho_{i,j}^n}{\rho_{i,j+1}^n} 
    - \numflux{2}{i}{j-\frac{1}{2}}{\rho_{i,j-1}^n}{\rho_{i,j}^n}
   \leq  \mleft( L_{2,1} + L_{2,2} \mright)  \mleft( \rho_{i,j}^n - \rho_m \mright). $$
  Thus, the cell-average value at time step $n+1$ with scheme \eqref{eq:1stscheme} and CFL condition \eqref{eq:CFL} fulfills
  \begin{align*}
    \rho_{i,j}^{n+1}
    &\geq \rho_{i,j}^{n} - \lambda_1 \mleft( L_{1,1} + L_{1,2} \mright)  \mleft(  \rho_{i,j}^n - \rho_m \mright) -  \lambda_2 \mleft( L_{2,1} + L_{2,2} \mright)  \mleft(\rho_{i,j}^n - \rho_m\mright)\\
    &\geq \rho_{i,j}^n - \mleft( \rho_{i,j}^n - \rho_m \mright) = \rho_m 
  .\end{align*}
  The proof for the upper bound in the case of an existing $\rho_M^k \in \R$ is similar.
\end{proof}
Since $\rho_m^k\geq 0$ the positivity of solutions is preserved and by standard arguments, see e.g.\ \cite[Lem.~2.4]{ACT15}, we obtain the following result on the $L^1$-norm.
\begin{corollary}[$L^1$-bound]\label{cor:L1}
  Under Asm.~\ref{asm} the approximate solution computed by scheme \eqref{eq:1stscheme} with a numerical flux as defined in Def.~\ref{def:flux} and a time step size fulfilling \eqref{eq:CFL} satisfies
  $ \left\lVert \rho_{\Delta}^k\mleft( t^n,\cdot \mright) \right\rVert _{L^1} =  \left\lVert \rho_0^k \right\rVert _{L^1} $ for $k=1,\ldots,K$, where
  ${\left\lVert \rho_{\Delta}^k\mleft( t^n,\cdot \mright) \right\rVert _{L^1} = \dx \dy \sum_{i,j \in \Z} |\rho_{i,j}^n| }$.
\end{corollary}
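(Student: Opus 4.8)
The plan is to combine the maximum principle from Thm.~\ref{eq:maxprinciple1} with the conservative (telescoping) structure of the scheme \eqref{eq:1stscheme}. Since $\rho_m^k \ge 0$, Thm.~\ref{eq:maxprinciple1} yields $\rho_{i,j}^{k,n} \ge 0$ for all $i,j \in \Z$, $n \in \N$, so that $|\rho_{i,j}^{k,n}| = \rho_{i,j}^{k,n}$ and hence
$$
\norm{\rho_\Delta^k(t^n,\cdot)}_{L^1} = \dx\,\dy \sum_{i,j\in\Z} \rho_{i,j}^{k,n}.
$$
It therefore suffices to show that the weighted sum of the cell averages is invariant in $n$. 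I would prove this by induction, the base case $n=0$ being exactly the definition of the initial data \eqref{eq:rho0_L1}, which gives $\dx\,\dy\sum_{i,j}\rho_{i,j}^{k,0}=\norm{\rho_0^k}_{L^1}$.

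For the induction step I would sum the update \eqref{eq:1stscheme} over all $i,j \in \Z$ (omitting the superscript $k$). The two flux contributions are in conservative form: for each fixed $j$ the sum $\sum_{i}\big(\numflux{1}{i+\frac{1}{2}}{j}{\rho_{i,j}^n}{\rho_{i+1,j}^n} - \numflux{1}{i-\frac{1}{2}}{j}{\rho_{i-1,j}^n}{\rho_{i,j}^n}\big)$ telescopes to the difference of the interface fluxes as $i\to\pm\infty$, and symmetrically for the $x_2$-direction with the roles of $i$ and $j$ exchanged. Provided these limiting fluxes vanish, every flux term cancels and $\sum_{i,j}\rho_{i,j}^{n+1} = \sum_{i,j}\rho_{i,j}^n$, which closes the induction and delivers $\norm{\rho_\Delta^k(t^n,\cdot)}_{L^1} = \norm{\rho_0^k}_{L^1}$.

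The step requiring care — and the main obstacle — is making this telescoping rigorous on the unbounded grid: one must guarantee absolute summability of the cell values (so the double series may be rearranged) and that the interface fluxes tend to $0$ as $|i|+|j|\to\infty$. Both follow from integrability of the data. Note first that $\rho_0^k\in L^1(\R^2)$ together with $\rho_0^k\ge\rho_m^k\ge 0$ a.e.\ forces $\rho_m^k=0$ in the $L^1$-setting, since a positive lower bound over all of $\R^2$ is incompatible with integrability. Then consistency (Def.~\ref{def:flux}, item~1) combined with $\fb^k(\cdot,\cdot,\rho_m^k,\cdot)=0$ gives $\numflux{1}{i+\frac{1}{2}}{j}{0}{0}=0$, and Lipschitz continuity (Def.~\ref{def:flux}, item~3) yields $|\numflux{1}{i+\frac{1}{2}}{j}{a}{b}|\le L_{1,1}|a|+L_{1,2}|b|\to 0$ as $a,b\to 0$, so the boundary fluxes indeed vanish. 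To avoid any circular dependence on the quantity being estimated, I would first establish the identity for compactly supported initial data, where only finitely many terms are nonzero and the telescoping is elementary, and then pass to the general case by $L^1$-density together with the $L^1$-Lipschitz dependence of the scheme on its input; this is precisely the standard argument referenced in \cite[Lem.~2.4]{ACT15}.
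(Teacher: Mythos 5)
Your proof is correct and follows essentially the same route the paper takes: nonnegativity via the maximum principle (Thm.~\ref{eq:maxprinciple1}) plus telescoping of the conservative update \eqref{eq:1stscheme}, which is exactly the ``standard argument'' the paper delegates to \cite[Lem.~2.4]{ACT15}, and your observation that assumption $(\rhob_0)$ forces $\rho_m^k=0$ --- so that consistency makes the numerical fluxes vanish on zero states and the Lipschitz bound $|F|\le L_{1,1}|a|+L_{1,2}|b|$ gives absolute summability, justifying both the rearrangement of the double series and the vanishing of the boundary terms --- is the right way to make the telescoping rigorous on the unbounded grid. One remark: your final density step is superfluous and slightly weaker than the rest --- the induction hypothesis already supplies the finiteness of $\dx\dy\sum_{i,j}\rho_{i,j}^{k,n}$ needed to justify every manipulation at step $n+1$, so there is no circularity to avoid, and that paragraph quietly invokes an $L^1$-Lipschitz dependence of the (nonlocal, hence non-monotone) scheme on its initial data which is itself nontrivial and nowhere proven; dropping it leaves a complete proof.
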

Next, we prove that the total variation remains bounded for every finite time $T>0$.
\begin{theorem}[BV estimate in space]\label{thm:BV}
  Let Asm.~\ref{asm} hold. Using a scheme \eqref{eq:1stscheme} with a flux from {Def.~\ref{def:flux}} and a time step fulfilling the CFL conditions \eqref{eq:CFL} there exist $\mathcal{K}_1, \mathcal{K}_2 >0$ only depending on $\rhob_0, \etab, \fb^k$, $k=1,\ldots,K$, the Lipschitz constants in Def.\ \ref{def:flux}  and $\delta=\frac{\dx}{\dy}$ such that
  \begin{align*}
    \text{TV}  \mleft(\rho_{\Delta}^k\mleft( t^n,\cdot \mright)  \mright) &= \sum_{i,j\in \Z} \revb{\Bigl(} \left| \rho_{i+1,j}^{k,n} - \rho_{i,j}^{k,n} \right| \dy +  \left| \rho_{i,j+1}^{k,n} - \rho_{i,j}^{k,n} \right| \dx \revb{\Bigr)}\\
    &\leq \exp\mleft( t^{n} \mathcal{K}_1 \mright) \text{TV}\mleft(\rho^k_0\mright) + \frac{\mathcal{K}_{2}}{\mathcal{K}_1} \mleft( \exp\mleft( t^{n} \mathcal{K}_{1} \mright) -1 \mright)
  \end{align*}
  for $k=1,\dots,K$.
\end{theorem}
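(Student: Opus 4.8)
The plan is to derive a one-step inequality of the form $\text{TV}(\rho_\Delta^k(t^{n+1},\cdot)) \le (1+\dt\,\mathcal{K}_1)\,\text{TV}(\rho_\Delta^k(t^n,\cdot)) + \dt\,\mathcal{K}_2$ and then iterate it. Dropping the superscript $k$, I would first subtract two copies of the scheme \eqref{eq:1stscheme} shifted in the $x_1$-direction to obtain an evolution equation for the forward differences $\rho_{i+1,j}^{n+1}-\rho_{i,j}^{n+1}$, and likewise in the $x_2$-direction for $\rho_{i,j+1}^{n+1}-\rho_{i,j}^{n+1}$. Each such difference splits into three kinds of contributions: the discrete initial difference, the \emph{aligned} flux differences (the $F_1$-terms for the $x_1$-differences and the $F_2$-terms for the $x_2$-differences), and the \emph{transverse} flux differences (the $F_2$-terms appearing in the $x_1$-differences and vice versa).

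For the aligned contributions I would proceed as in the monotone local case. By the monotonicity in condition~2 of Def.~\ref{def:flux} together with the mean value theorem, the part of the flux difference in which only the state arguments vary can be written with nonnegative incremental coefficients, and the CFL condition \eqref{eq:CFL} guarantees that the resulting scheme for the differences is a convex combination, so that this part does not increase the total variation. The remaining part, where identical states are fed into the numerical fluxes attached to two adjacent interfaces in the same direction, is controlled through the Lipschitz continuity (condition~3) together with the regularity of $\fb^k$ in $\xb$ and $\Rb$ from Asm.~\ref{asm}$(\fb)$ and of the kernel from Asm.~\ref{asm}$(\etab)$; here I would bound the first difference $\Rb^n_{i+\frac32,j}-\Rb^n_{i+\frac12,j}$ of the discrete convolution \eqref{eq:Ri}, which is $O(\dx)$, and the associated second difference, which is $O(\dx^2)$, producing a contribution proportional to a neighboring state difference (absorbed by the total variation) plus a forcing term of size $\mathcal{M}|\rho_{i,j}^n|\,\dx^2$.

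The transverse contributions are precisely those that condition~4 of Def.~\ref{def:flux} was designed to control: after adding and subtracting the consistent values $F(\rho_{i,j},\rho_{i,j})$, the double-difference bound there yields an estimate of type $(\dy|\rho_{i,j}^n-\rho_{i-1,j}^n|+\mathcal{M}|\rho_{i,j}^n|\dy^2)L'$ (and the $\dx$-analog), so that the $x_2$-flux variation across the $x_1$-direction is dominated by transverse state differences, which feed back into the total variation in the complementary direction, plus the same type of quadratic forcing. Summing the two difference equations against the weights $\dy$ and $\dx$ and taking absolute values, I would collect all state-difference terms into $\text{TV}(\rho_\Delta(t^n,\cdot))$, using $\dx=\delta\dy$ to merge the $\dx$- and $\dy$-weighted sums consistently: the monotone part contributes at most the identity, the Lipschitz and assumption-4 parts contribute the factor $1+\dt\mathcal{K}_1$, and all forcing terms of type $\mathcal{M}|\rho_{i,j}^n|\dx^2,\mathcal{M}|\rho_{i,j}^n|\dy^2$ sum to $\dt\,\mathcal{K}_2$ because $\sum_{i,j}|\rho_{i,j}^n|\dx\dy=\|\rho^n_\Delta\|_{L^1}=\|\rho_0\|_{L^1}$ is constant in $n$ by Cor.~\ref{cor:L1}. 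Iterating the one-step inequality and using $(1+\dt\mathcal{K}_1)^n\le\exp(t^n\mathcal{K}_1)$ with the geometric-sum identity for the forcing then gives the claimed bound.

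The main obstacle I anticipate is the bookkeeping of the transverse terms: one must verify that, after invoking condition~4, the shifted-index state differences (e.g.\ those tied to $\rho_{i,j}^n-\rho_{i-1,j}^n$ coming from an $x_2$-flux) reassemble, upon summation over $i,j$, into the total variation in the complementary direction without generating uncontrolled cross terms, and that the CFL condition \eqref{eq:CFL} — which only bounds $\lambda_1(L_{1,1}+L_{1,2})$ and $\lambda_2(L_{2,1}+L_{2,2})$ — still keeps the aligned incremental coefficients nonnegative once both directions are combined. Tracking which constants enter $\mathcal{K}_1$ (the Lipschitz constants $L_{*,*},L'_{*,*}$, the derivative bounds of $\fb$ and $\eta$, and $\delta$) versus $\mathcal{K}_2$ (those multiplied by $\mathcal{M}$ and $\|\rho_0\|_{L^1}$) is the delicate part; the rest is the standard discrete Gronwall argument.
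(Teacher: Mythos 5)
Your proposal is correct and follows essentially the same route as the paper's proof: a one-step TV recursion obtained by splitting each difference into aligned incremental (Harten-type) terms controlled by monotonicity and the CFL condition, transverse terms controlled by condition~4 of Def.~\ref{def:flux}, and interface/nonlocal shift terms controlled by the mean value theorem together with first- and second-difference bounds on the discrete convolution, with the quadratic forcing summed via the $L^1$ conservation of Cor.~\ref{cor:L1} and a discrete Gronwall iteration. Your anticipated CFL obstacle is resolved exactly as in the paper: writing $\rho^{n+1}_{i,j}=H^{1,n}_{i,j}+H^{2,n}_{i,j}$ with each part carrying $\tfrac12\rho^{n}_{i,j}$, the aligned and transverse incremental coefficients each lie in $[0,\tfrac12]$ under \eqref{eq:CFL}, so all coefficients remain nonnegative when the two directions are combined.
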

\begin{proof}
  We prove the bound on $\sum_{i,j\in \Z}  \left| \rho_{i+1,j}^{k,n} - \rho_{i,j}^{k,n} \right| \dy$, the remainder is analogue. The scheme \eqref{eq:1stscheme} is reformulated by defining
  \begin{equation*}
    \begin{aligned}
       H_{i,j}^{1,n} \defeq \frac12 \rho_{i,j}^{n} &- \lambda_1 \mleft(\numfluxkn{1}{i+\frac{1}{2}}{j}{\rho_{i,j}}{\rho_{i+1,j}} 
        - \numfluxkn{1}{i-\frac{1}{2}}{j}{\rho_{i-1,j}}{\rho_{i,j}}\mright) 
        \\ H_{i,j}^{2,n} \defeq \frac12 \rho_{i,j}^{n} & -\lambda_2 \mleft(\numfluxkn{2}{i}{j+\frac{1}{2}}{\rho_{i,j}}{\rho_{i,j+1}} - \numfluxkn{2}{i}{j-\frac{1}{2}}{\rho_{i,j-1}}{\rho_{i,j}}  \mright)
    \end{aligned}
  \end{equation*}
  such that
  \begin{equation}\label{eq:H1+H2}
    \begin{aligned}
        \rho_{i,j}^{n+1} = H_{i,j}^{1,n} + H_{i,j}^{2,n}
    \end{aligned}
  \end{equation}
  and
  \begin{align}
      \sum_{i,j\in \Z}\left| \rho_{i+1,j}^{n+1} - \rho_{i,j}^{n+1} \right| \dy 
      \leq  \sum_{i,j\in \Z}\left| H_{i+1,j}^{1,n} - H_{i,j}^{1,n}  \right| \dy \label{eq:diffH12a}\\
      + \sum_{i,j\in \Z}\left| H_{i+1,j}^{2,n} - H_{i,j}^{2,n}  \right| \dy.\label{eq:diffH12b}
  \end{align}
  We combine the ideas from \cite{ACG15,FSS23, RWGG20} and start with considering \eqref{eq:diffH12a} \revc{by rewriting $H_{i,j}^{1,n}$, whereby we denote ${\Delta_{i+\frac{1}{2},j}^n \defeq \rho_{i+1,j}^n - \rho_{i,j}^n}$, $i,j \in \Z$ and}
  \begin{equation}\label{eq:ab}
    \begin{aligned}
      a_{i-\frac{1}{2},j}^n & \defeq \begin{cases}
       \lambda_1  \dfrac{ \numflux{1}{i-\frac{1}{2}}{j}{\rho_{i-1,j}^n}{\rho_{i,j}^n} - \numflux{1}{i-\frac{1}{2}}{j}{\rho_{i,j}^n}{\rho_{i,j}^n}}{ \rho_{i-1,j}^n - \rho_{i,j}^n}, &\text{if  }\rho_{i-1,j}^n \neq \rho_{i,j}^n,\\ 
       0, &\text{else},
      \end{cases} \\ 
    b_{i+\frac{1}{2},j}^n & \defeq \begin{cases}
       \lambda_1 \dfrac{ \numflux{1}{i+\frac{1}{2}}{j}{\rho_{i,j}^n}{\rho_{i+1,j}^n} - \numflux{1}{i+\frac{1}{2}}{j}{\rho_{i,j}^n}{\rho_{i,j}^n}}{ \rho_{i,j}^n - \rho_{i+1,j}^n}, &\text{if  }\rho_{i,j}^n \neq \rho_{i+1,j}^n,\\ 
       0, &\text{else},
      \end{cases}
    \end{aligned}
  \end{equation}
such that
\begin{subequations}\label{eq:Hdiff}
\mathtoolsset{showonlyrefs=false}
\begin{align}
H_{i+1,j}^{1,n} - H_{i,j}^{1,n} =& \mleft( \frac12 - a_{i+\frac{1}{2},j}^n - b_{i+\frac{1}{2},j}^n \mright) \Delta_{i+\frac{1}{2},j}^n + b_{i+\frac{3}{2},j}^n \Delta_{i+\frac{3}{2},j}^n +  a_{i-\frac{1}{2},j}^n  \Delta_{i-\frac{1}{2},j}^n \hspace{1cm} \label{eq:Hdiffa}
\end{align}
\vspace{-\baselineskip}
\begin{equation}\label{eq:Hdiffb}
\begin{split}
 &\hspace{2.5cm} + \lambda_1 \Bigl(\flux{1}{i+\frac{1}{2}}{j}{\rho_{i+1,j}^{\reva{n}}}  - \flux{1}{i+\frac{3}{2}}{j}{\rho_{i+1,j}^{\reva{n}}} - \flux{1}{i-\frac{1}{2}}{j}{\rho_{i,j}^{\reva{n}}}  + \flux{1}{i+\frac{1}{2}}{j}{\rho_{i,j}^{\reva{n}}}
    \Bigr)\reva{.}
\end{split}
\end{equation}
\end{subequations}
\revc{We first determine a bound on \eqref{eq:Hdiffb} and begin by examining the differences $\flux{1}{i-\frac{1}{2}}{j}{\rho_{i,j}^{\reva{n}}}  - \flux{1}{i+\frac{1}{2}}{j}{\rho_{i,j}^{\reva{n}}}$ and $\flux{1}{i+\frac{1}{2}}{j}{\rho_{i+1,j}^{n}}  - \flux{1}{i+\frac{3}{2}}{j}{\rho_{i+1,j}^{n}}$. 
To this aim the mean value theorem, assumption $(\mathbf{f})$ and the estimates in \cite[Lem.~A.2]{ACG15} for differences in the nonlocal terms are used repeatedly such that 
\begin{align*}
  |\eqref{eq:Hdiffb}| \leq & \, \mathcal{M} \dx^2 |\rho_{i,j}^n| K_{3,1}+ \Lipf{\rho}{x_1} |\Delta_{i+\frac{1}{2},j}^n| \dx +\Lipf{\rho}{R} |\Delta_{i+\frac{1}{2},j}^n| \dx \norm{\ddx \etab}_\infty \norm{\rhob_0}_{L^1}
\end{align*} 
with $K_{3,1} \defeq \mleft( 2 + 2\norm{\ddx \etab}_\infty \norm{\rhob_0}_{L^1} + 2 \norm{\ddx \etab}_\infty^2 \norm{\rhob_0}_{L^1}^2 + 2 \norm{\ddxx \etab}_\infty \norm{\rhob_0}_{L^1} \mright)$.}\\
The terms $\mleft( \frac12 - a_{i+\frac{1}{2},j}^n - b_{i+\frac{1}{2},j}^n \mright)$, $a_{i-\frac{1}{2},j}^n$ and $b_{i+\frac{3}{2},j}^n$ are non-negative due to the Lipschitz condition on $\numflux{1}{i \pm \frac{1}{2}}{j}{}{}$ and the CFL condition \eqref{eq:CFL}. \revc{Using $\dt = \lambda_1 \dx$ leads us to
\begin{align*}
  \left|H_{i+1,j}^{1,n} - H_{i,j}^{1,n}\right|
  & \leq \, \mleft( \frac12 - a_{i+\frac{1}{2},j}^n - b_{i+\frac{1}{2},j}^n \mright) |\Delta_{i+\frac{1}{2},j}^n| + b_{i+\frac{3}{2},j}^n |\Delta_{i+\frac{3}{2},j}^n| +  a_{i-\frac{1}{2},j}^n  |\Delta_{i-\frac{1}{2},j}^n|\\
  &+ \dt \mleft(  \Lipf{\rho}{x_1} + \Lipf{\rho}{R}\norm{\ddx \etab}_\infty \norm{\rhob_0}_{L^1} \mright) |\Delta_{i+\frac{1}{2},j}^n|+ \dt \mathcal{M} \dx |\rho_{i,j}^n| K_{3,1}.
\end{align*}}
Summing over $i,j \in \Z$ and rearranging the indices leaves us with
\begin{align*}
  \sum_{i,j\in \Z}& \left|H_{i+1,j}^{1,n} - H_{i,j}^{1,n}\right| \dy \\\leq &\dt  \mathcal{M} \norm{\rhob_0}_{L^1} K_{3,1} +\sum_{i,j\in \Z} \mleft( \frac{1}{2} + \dt \mleft(  \Lipf{\rho}{x_1} + \Lipf{\rho}{R}\norm{\ddx \etab}_\infty \norm{\rhob_0}_{L^1} \mright)  \mright) \dy |\Delta_{i+\frac{1}{2},j}^n|.
 \end{align*}
Thus, we have a bound for \eqref{eq:diffH12a} such that we can proceed with \eqref{eq:diffH12b}.
\revc{We rewrite}
\begin{subequations}
\begin{align}
H_{i+1,j}^{2,n} - H_{i,j}^{2,n}
=& \Delta_{i+\frac{1}{2},j}^n 
   \mleft( \tfrac{1}{2} - \tilde{a}_{i+1,j+\frac{1}{2}} - \tilde{b}_{i+1,j-\frac{1}{2}} \mright)
   + \Delta_{i+\frac{1}{2},j-1}^n \tilde{a}_{i+1,j-\frac{1}{2}}
   + \Delta_{i+\frac{1}{2},j+1}^n \tilde{b}_{i+1,j+\frac{1}{2}}
\label{eq:H2a}
\\[0.8em]
& + \lambda_2 \Bigl(
    \numflux{2}{i+1}{j-\frac{1}{2}}{\rho_{i,j-1}^n}{ \rho_{i,j}^n} - \numflux{2}{i+1}{j+\frac{1}{2}}{\rho_{i,j}^n}{ \rho_{i,j+1}^n} \notag \\[0.3em]
&\qquad\quad
    - \numflux{2}{i}{j-\frac{1}{2}}{\rho_{i,j-1}^n}{\rho_{i,j}^n}
    + \numflux{2}{i}{j+\frac{1}{2}}{\rho_{i,j}^n}{\rho_{i,j+1}^n}
    \Bigr)
\label{eq:H2b}
\end{align}
\end{subequations}
with 
\begin{align*}
  \tilde{a}_{i+1,j+\frac{1}{2}}^n &= \begin{cases}
      \lambda_2 \dfrac{\numflux{2}{i+1}{j+\frac{1}{2}}{\rho_{i+1,j}^n}{\rho_{i+1,j+1}^n}
      - \numflux{2}{i+1}{j+\frac{1}{2}}{\rho_{i,j}^n}{\rho_{i+1,j+1}^n}}
      { \rho_{i+1,j}^n - \rho_{i,j}^n}, &\text{if  }\rho_{i+1,j}^n \neq \rho_{i,j}^n,\\ 
      0, &\text{else},
    \end{cases} \\ 
  \tilde{b}_{i+1,j+\frac{1}{2}}^n &= \begin{cases}
      \lambda_2 \dfrac{
        \numflux{2}{i+1}{j+\frac{1}{2}}{\rho_{i,j}^n}{\rho_{i,j+1}^n}
        - \numflux{2}{i+1}{j+\frac{1}{2}}{\rho_{i,j}^n}{\rho_{i+1,j+1}^n}}
        { \rho_{i+1,j+1}^n - \rho_{i,j+1}^n}, &\text{if  }\rho_{i+1,j+1}^n \neq \rho_{i,j+1}^n,\\ 
      0, &\text{else}.
    \end{cases}
\end{align*}
Considering the absolute value of \eqref{eq:H2a} and summing over $i,j \in \Z$, \eqref{eq:H2a} can be bounded in the same way as above. 
Now, we pass to \eqref{eq:H2b} and obtain
\begin{subequations}
\begin{flalign}
&\numflux{2}{i+1}{j-\frac{1}{2}}{\rho_{i,j-1}^n}{\rho_{i,j}^n}
    - \numflux{2}{i+1}{j+\frac{1}{2}}{\rho_{i,j}^n}{\rho_{i,j+1}^n}
    -\numflux{2}{i}{j-\frac{1}{2}}{\rho_{i,j-1}^n}{\rho_{i,j}^n}
    + \numflux{2}{i}{j+\frac{1}{2}}{\rho_{i,j}^n}{\rho_{i,j+1}^n} \notag\\
    =  \phantom{-} &\numflux{2}{i+1}{j-\frac{1}{2}}{\rho_{i,j-1}^n}{\rho_{i,j}^n}
    - \numflux{2}{i+1}{j-\frac{1}{2}}{\rho_{i,j}^n}{\rho_{i,j}^n}
    -  \numflux{2}{i}{j-\frac{1}{2}}{\rho_{i,j-1}^n}{\rho_{i,j}^n} 
    + \numflux{2}{i}{j-\frac{1}{2}}{\rho_{i,j}^n}{\rho_{i,j}^n} \label{eq:a1}\\
    -&\numflux{2}{i+1}{j+\frac{1}{2}}{\rho_{i,j}^n}{\rho_{i,j+1}^n}
    + \numflux{2}{i+1}{j+\frac{1}{2}}{\rho_{i,j}^n}{\rho_{i,j}^n}
    +  \numflux{2}{i}{j+\frac{1}{2}}{\rho_{i,j}^n}{\rho_{i,j+1}^n} 
    - \numflux{2}{i}{j+\frac{1}{2}}{\rho_{i,j}^n}{\rho_{i,j}^n}\label{eq:a2}\\
    +& \flux{2}{i+1}{j-\frac{1}{2}}{\rho_{i,j}^n} - \flux{2}{i}{j-\frac{1}{2}}{\rho_{i,j}^n}
  - \flux{2}{i+1}{j+\frac{1}{2}}{\rho_{i,j}^n} + \flux{2}{i}{j+\frac{1}{2}}{\rho_{i,j}^n}.\label{eq:a3}
\end{flalign}
\end{subequations}
For the terms \eqref{eq:a1} and \eqref{eq:a2} the estimate from the \revb{condition}~4 in Def.~\ref{def:flux} can be used.
\revc{Proceeding with \eqref{eq:a3}, we get
\begin{align*}
  |\eqref{eq:a3}| 
  \leq &\Bigl| \ddx \fluxself{2}{\Bigl(\widetilde{x}_1^{i+\frac{1}{2}},x_2^{j-\frac{1}{2}}  \Bigr)^T\!\!}{\rho_{i,j}^{n}}{{\Rb}_{i,j-\frac{1}{2}}^n\!} \dx - \ddx \fluxself{2}{\Bigl(\widehat{x}_1^{i+\frac{1}{2}},x_2^{j+\frac{1}{2}}  \Bigr)^T\!\!}{\rho_{i,j}^{n}}{{\Rb}_{i,j+\frac{1}{2}}^n\!} \dx \Bigr|\\
  & + \Bigl| \nabla_{\Rb} \fluxself{2}{\xb^{i+1,j-\frac{1}{2}}}{\rho_{i,j}}{\widetilde{\Rb}_{i+\frac{1}{2},j-\frac{1}{2}}^n} \mydot \mleft( {\Rb}_{i+1,j-\frac{1}{2}}^n -{\Rb}_{i,j-\frac{1}{2}}^n \mright) \\
    & \qquad- \nabla_{\Rb} \fluxself{2}{\xb^{i+1,j+\frac{1}{2}}}{\rho_{i,j}}{\widehat{\Rb}_{i+\frac{1}{2},j+\frac{1}{2}}^n} \mydot \mleft( {\Rb}_{i+1,j+\frac{1}{2}}^n -{\Rb}_{i,j+\frac{1}{2}}^n \mright) \Bigr|,
\end{align*}
where $\widetilde{x}_1^{i+\frac{1}{2}}, \widehat{x}_1^{i+\frac{1}{2}} \in [x_1^{i},x_1^{i+1}]$, 
$\widetilde{\Rb}_{i+\frac{1}{2},j-\frac{1}{2}}^n$ between $\Rb_{i+1,j-\frac{1}{2}}^n$ and $\Rb_{i,j-\frac{1}{2}}^n$, and $\widehat{\Rb}_{i+\frac{1}{2},j+\frac{1}{2}}^n$ between $\Rb_{i+1,j+\frac{1}{2}}^n$ and $\Rb_{i,j+\frac{1}{2}}^n$.
Similar to the estimates above the mean value theorem, assumption $(\mathbf{f})$ and the estimates in \cite[Lem.~A.2]{ACG15} allow us to bound the first term.
For the second term, we additionaly use 
\begin{align}\label{eq:4Rdiff}
  \norm{\Rb_{i+1,j-\frac{1}{2}}^n - \Rb_{i,j-\frac{1}{2}}^n - \Rb_{i+1,j+\frac{1}{2}}^n + \Rb_{i,j+\frac{1}{2}}^n }_2
  \leq \,  \mleft( \left\lVert \partial_{x_1 x_1}^2 \etab \right\rVert _{\infty} 2 \dx^2 + \left\lVert \partial_{x_1 x_2}^2 \etab \right\rVert _{\infty}  \dx \dy\mright)  \left\lVert \rhob_0 \right\rVert _{L^1}\!,\;\;
\end{align}
which is proven in Appendix \ref{sec:appendix}, and 
\begin{align*}
  \left\lVert \widetilde{\Rb}_{i+\frac{1}{2},j-\frac{1}{2}}^n- \widehat{\Rb}_{i+\frac{1}{2},j+\frac{1}{2}}^n \right\rVert _2
  \leq 2 \dx \norm{\ddx \etab}_\infty \norm{\rhob_0}_{L^1} + \dy \norm{\ddy \etab}_\infty \norm{\rhob_0}_{L^1}  
.\end{align*}
Note that the intermediate values for $\xb$ and $\Rb$ not only differ in the $x_2$-direction but also in the $x_1$-direction. Therefore, we use the fixed relation $\delta = \frac{\dx}{\dy}$ and thus $\dx^2= \delta \dx \dy$ such that the summation over $i,j \in \Z$ contributes to $\norm{\rhob_0}_{L^1}$ and we get
the estimate $\sum_{i,j\in \Z}\left| \eqref{eq:a3} \right| \leq \mathcal{M} \left\lVert \rhob_0 \right\rVert _{L^1} K_{4,1}$ for a constant
\begin{align*}
  K_{4,1} \defeq \Bigl(& \delta+1+ \norm{\ddx \etab}_\infty \norm{\rhob_0}_{L^1}\bigl[1 + \norm{\rhob_0}_{L^1}\mleft( 2 \delta \norm{\ddx \etab}_\infty+\norm{\ddy \etab}_\infty \mright)\bigr]\\ 
  &\quad+  \norm{\rhob_0}_{L^1} \left[\norm{\ddy \etab}_\infty + \left\lVert \partial_{x_1 x_1}^2 \etab \right\rVert _{\infty} 2 \delta + \left\lVert \partial_{x_1 x_2}^2 \etab \right\rVert _{\infty} \right] \Bigr).
\end{align*}
With this and $\lambda_2 \dy = \dt$ it follows for \eqref{eq:diffH12b} that}
\begin{align*}
  \sum_{i,j\in \Z}\left| H_{i+1,j}^{2,n} - H_{i,j}^{2,n}  \right| \dy 
  \leq & \frac{1}{2} \sum_{i,j\in \Z} \left|\Delta_{i+\frac{1}{2},j}^n \right| \dy+ \dt \mathcal{M} \norm{ \rhob_0}_{L^1} K_{4,1} \\
  &+\dt \mleft( L_{2,1}' + L_{2,2}' \mright) \Bigl[\mathcal{M} \delta \norm{ \rhob_0}_{L^1} + \sum_{i,j\in \Z} \left| \rho_{i,j+1}^n - \rho_{i,j}^n \right| \dx \Bigr].
\end{align*}
Combining the estimates for \eqref{eq:diffH12a} and \eqref{eq:diffH12b} yields a bound for $\sum_{i,j\in \Z} \left| \rho_{i+1,j}^{n+1} - \rho_{i,j}^{n+1} \right| \dy$.
Proceeding analogously for $\sum_{i,j\in \Z} \left| \rho_{i,j+1}^{n+1} - \rho_{i,j}^{n+1} \right| \dx$ we obtain similar estimates with constants $K_{3,2},K_{4,2}$. Thus,
\begin{align*}
  \text{TV}&\mleft(\rho_{\Delta}^k\mleft( t^{n+1},\cdot \mright)  \mright)\leq (1+ K_{1,1} \dt)\sum_{i,j\in \Z} \left| \rho_{i+1,j}^n - \rho_{i,j}^n \right| \dy + (1+K_{1,2}\dt) \sum_{i,j\in \Z} \left| \rho_{i,j+1}^n - \rho_{i,j}^n \right| \dx + \mathcal{K}_2 \dt
\end{align*}
with positive constants
\begin{align*}
  K_{1,\ell} =& \mleft(  \Lipf{\rho}{x_\ell} + \Lipf{\rho}{R}\norm{\partial_{x_\ell} \etab}_\infty \norm{\rhob_0}_{L^1} \mright) + \mleft( L_{\ell,1}' + L_{\ell,2}' \mright)\quad\text{for }\ell\in \{1,2\},  \\
  \mathcal{K}_2 =&  \mathcal{M} \left\lVert \rhob_0 \right\rVert _{L^1} \biggl(K_{3,1}+K_{3,2}+K_{4,1}+K_{4,2} + \mleft( L_{1,1}' + L_{1,2}' \mright) \frac{1}{\delta} + \mleft( L_{2,1}' + L_{2,2}' \mright)  \delta
  \biggr)  
.\end{align*}
We define $\mathcal{K}_1\defeq \max\{ K_{1,1}, K_{1,2} \}$ to bound the total variation recursively 
\begin{align*}
  \text{TV}\mleft(\rho_{\Delta}^k\mleft( t^{n+1},\cdot \mright)  \mright) &\leq \mleft( 1+\mathcal{K}_1 \dt \mright) \text{TV}\mleft(\rho_{\Delta}^k\mleft( t^{n},\cdot \mright)  \mright) + \mathcal{K}_{2} \dt\\
  &\leq \exp\mleft( t^{n+1} \mathcal{K}_1 \mright) \text{TV}(\rho_0) + \frac{\mathcal{K}_{2}}{\mathcal{K}_1} \mleft( \exp\mleft( t^{n+1} \mathcal{K}_{1} \mright) -1 \mright)
.\end{align*}
\end{proof}
The BV estimate above allows us to deduce a time continuity estimate which gives us an estimate for the Lipschitz continuity in time.
\begin{proposition}[Time continuity estimate]\label{prop:timecont}
  Under the same assumptions as for Thm.~\ref{thm:BV} we obtain the following estimate
  $$ \left\lVert \rho_\Delta^k\mleft( t^{n+1},\cdot  \mright) - \rho_\Delta^k \mleft( t^{n},\cdot  \mright) \right\rVert _{L^1} \leq \dt \, \mathcal{K}_3 \; \text{TV}\mleft( \rho_\Delta^k\mleft(t^n, \cdot \mright) \mright) + \dt \, \mathcal{K}_4$$
  for $k=1,\dots,K$, $\mathcal{K}_4>0$ only depending on $\rhob_0, \etab, \fb^k$ and $\mathcal{K}_3>0$ depending on the Lipschitz constants of the numerical fluxes $F_1, F_2$. 
\end{proposition}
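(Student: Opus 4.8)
The plan is to estimate the $L^1$-difference directly from the update rule \eqref{eq:1stscheme}. Omitting the superscript $k$ as in the previous proofs, the scheme gives
\begin{equation*}
  \rho_{i,j}^{n+1} - \rho_{i,j}^{n}
  = - \lambda_1 \mleft( \numflux{1}{i+\frac{1}{2}}{j}{\rho_{i,j}^n}{\rho_{i+1,j}^n} - \numflux{1}{i-\frac{1}{2}}{j}{\rho_{i-1,j}^n}{\rho_{i,j}^n} \mright)
  - \lambda_2 \mleft( \numflux{2}{i}{j+\frac{1}{2}}{\rho_{i,j}^n}{\rho_{i,j+1}^n} - \numflux{2}{i}{j-\frac{1}{2}}{\rho_{i,j-1}^n}{\rho_{i,j}^n} \mright).
\end{equation*}
Multiplying by $\dx\dy$, summing over $i,j\in\Z$, applying the triangle inequality and using $\dx\dy\,\lambda_1=\dt\,\dy$ and $\dx\dy\,\lambda_2=\dt\,\dx$, I bound $\norm{\rho_\Delta^k(t^{n+1},\cdot)-\rho_\Delta^k(t^n,\cdot)}_{L^1}$ by $\dt$ times the discrete fluctuation of $F_1$ in the $x_1$-direction (weighted by $\dy$) plus that of $F_2$ in the $x_2$-direction (weighted by $\dx$). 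It therefore suffices to bound $\sum_{i,j\in\Z}\mleft|\numflux{1}{i+\frac{1}{2}}{j}{\rho_{i,j}^n}{\rho_{i+1,j}^n}-\numflux{1}{i-\frac{1}{2}}{j}{\rho_{i-1,j}^n}{\rho_{i,j}^n}\mright|\dy$ together with its $x_2$-analogue.

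For each flux difference I would insert the consistent intermediate states $\numflux{1}{i\pm\frac{1}{2}}{j}{\rho_{i,j}^n}{\rho_{i,j}^n}$. The two pieces comparing a one-sided state to $\rho_{i,j}^n$ are controlled by the Lipschitz continuity of the numerical flux (assumption~3 of Def.~\ref{def:flux}), giving $L_{1,1}\mleft|\rho_{i,j}^n-\rho_{i-1,j}^n\mright|+L_{1,2}\mleft|\rho_{i+1,j}^n-\rho_{i,j}^n\mright|$; summed against $\dy$ these yield $\dt(L_{1,1}+L_{1,2})$ times the $x_1$-part of $\text{TV}(\rho_\Delta^k(t^n,\cdot))$. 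The remaining piece collapses, by consistency, to the purely spatial/nonlocal increment $\flux{1}{i-\frac{1}{2}}{j}{\rho_{i,j}^n}-\flux{1}{i+\frac{1}{2}}{j}{\rho_{i,j}^n}$ evaluated at the single state $\rho_{i,j}^n$, which is exactly the quantity already expanded in \eqref{eq:fdiffi}.

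Using that expansion, assumption $(\boldsymbol f)$ to bound $|\nabla_{\Rb}\fb^k|,|\ddx\fb^k|\le\mathcal{M}|\rho_{i,j}^n|$, and the convolution estimate \eqref{eq:Rdiffi}, this increment is of order $\mathcal{M}|\rho_{i,j}^n|\dx\mleft(1+\norm{\ddx\etab}_\infty\norm{\rhob_0}_{L^1}\mright)$; summing against $\dy$ and invoking Cor.~\ref{cor:L1} in the form $\dx\dy\sum_{i,j}|\rho_{i,j}^n|=\norm{\rho_0^k}_{L^1}$ turns it into a constant multiple of $\dt\,\mathcal{M}\norm{\rho_0^k}_{L^1}$, independent of the solution and thus absorbed into $\mathcal{K}_4$. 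Performing the analogous splitting for $F_2$ (with the $x_2$-analogue of \eqref{eq:Rdiffi}) and adding the two contributions yields the claim with $\mathcal{K}_3=\max\{L_{1,1}+L_{1,2},\,L_{2,1}+L_{2,2}\}$ and $\mathcal{K}_4$ collecting the spatial and nonlocal constants depending only on $\rhob_0,\etab,\fb^k$.

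I do not expect a genuine obstacle: the sole nontrivial estimate is the spatial/nonlocal flux increment at a fixed state, and this is precisely the computation carried out for \eqref{eq:Hdiffb} in the proof of Thm.~\ref{thm:BV}. The time-continuity bound is thus essentially a repackaging of those estimates, the new point being that here the increment is taken at the \emph{same} state $\rho_{i,j}^n$, so no $|\Delta\rho|$-term appears in the remainder and the entire spatial contribution lands in the constant $\mathcal{K}_4$ via the $L^1$-bound, while the Lipschitz pieces supply the $\text{TV}$ term.
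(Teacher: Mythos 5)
Your proposal is correct and follows essentially the same route as the paper: adding and subtracting the consistent values $F_{1,n}^{i\pm\frac{1}{2},j}\mleft(\rho_{i,j}^n,\rho_{i,j}^n\mright)$ (and the $x_2$-analogues), using Lipschitz continuity for the two-sided pieces to produce the $\mathrm{TV}$ term with $\mathcal{K}_3=\max\{L_{1,1}+L_{1,2},\,L_{2,1}+L_{2,2}\}$, and reducing the remaining piece via consistency to the spatial/nonlocal flux increment, which is bounded through \eqref{eq:fdiffi}, assumption $(\boldsymbol{f})$, \eqref{eq:Rdiffi} and Cor.~\ref{cor:L1} to yield $\mathcal{K}_4$. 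No gaps.
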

\begin{proof}
  We use the update in time given by \eqref{eq:1stscheme}.
  By adding and subtracting the terms 
  $\lambda_1 \numflux{1}{i-\frac{1}{2}}{j}{\rho_{i,j}^n}{\rho_{i,j}^n}$, $\lambda_1 \numflux{1}{i+\frac{1}{2}}{j}{\rho_{i,j}^n}{\rho_{i,j}^n}$, $\lambda_2 \numflux{2}{i}{j-\frac{1}{2}}{\rho_{i,j}^n}{\rho_{i,j}^n}$, $\lambda_2 \numflux{2}{i}{j+\frac{1}{2}}{\rho_{i,j}^n}{\rho_{i,j}^n}$ it follows
  \begin{equation}
  \begin{aligned}
        \left| \rho_{i,j}^{n+1} - \rho_{i,j}^{n} \right| \leq  &\; \lambda_1 \Bigl( L_{1,1} \left| \rho_{i,j}^n - \rho_{i-1,j}^n \right| + L_{1,2} \left| \rho_{i+1,j}^n - \rho_{i,j}^n \right| + \left| \flux{1}{i+\frac{1}{2}}{j}{\rho_{i,j}^n}- \flux{1}{i-\frac{1}{2}}{j}{\rho_{i,j}^n} \right| \Bigr) \\
        & + \lambda_2  \Bigl( L_{2,1} \left| \rho_{i,j}^n - \rho_{i,j-1}^n \right| + L_{2,2} \left| \rho_{i,j+1}^n - \rho_{i,j}^n \right| + \left| \flux{2}{i}{j+\frac{1}{2}}{\rho_{i,j}^n}- \flux{2}{i}{j-\frac{1}{2}}{\rho_{i,j}^n} \right| \Bigr).
    \end{aligned}
\end{equation}
With the same inequalities as in the proof of Thm.~\ref{thm:BV} we get
\begin{align*}
  \dx \dy \sum_{i,j\in \Z}  &\left| \rho_{i,j}^{n+1} - \rho_{i,j}^{n}\right| \\
  \leq\; & \dt \mleft( L_{1,1}+L_{1,2} \mright) \sum_{i,j\in \Z} \left| \rho_{i+1,j}^n - \rho_{i,j}^n \right| \dy+ \dt \mleft( L_{2,1}+L_{2,2} \mright) \sum_{i,j\in \Z} \left| \rho_{i,j+1}^n - \rho_{i,j}^n \right| \dx\\
  &+ \dt \dy \sum_{i,j\in \Z} \mathcal{M} |\rho_{i,j}^n| \mleft( \norm{\Rb_{i+\frac{1}{2},j}^n - \Rb_{i-\frac{1}{2},j}^n}_2 + \dx \mright) \\
  &+ \dt \dx \sum_{i,j\in \Z} \mathcal{M} |\rho_{i,j}^n| \mleft( \norm{\Rb_{i,j+\frac{1}{2}}^n - \Rb_{i,j-\frac{1}{2}}^n}_2 + \dy \mright)
.\end{align*}
Thus, with $L \defeq \max\{ L_{1,1}+L_{1,2}, L_{2,1}+L_{2,2}\}$ we conclude
\begin{align*}
  \dx \dy \sum_{i,j\in \Z}  \left| \rho_{i,j}^{n+1} - \rho_{i,j}^{n}\right|
  \leq & \dt \, L \; \text{TV}\mleft( \rho_\Delta\mleft(t^n, \cdot \mright) \mright)\\
  &+ \dt \mathcal{M}  \mleft(2 + \norm{\ddx \etab}_\infty \norm{\rhob_0}_{L^1} + \norm{\ddy \etab}_\infty \norm{\rhob_0}_{L^1}\mright) \sum_{i,j\in \Z} \left| \rho_{i,j}^n \right| \dx \dy\\
  &= \dt \, \revb{\mathcal{K}_3} \; \text{TV}\mleft( \rho_\Delta\mleft(t^n, \cdot \mright) \mright) + \dt \revb{\mathcal{K}_4}
\end{align*}
\revb{with $\mathcal{K}_3 = L$ and $\mathcal{K}_4 = \mathcal{M} \mleft(2 + \mleft( \norm{\ddx \etab}_\infty + \norm{\ddy \etab}_\infty  \mright) \norm{\rhob_0}_{L^1}\mright) \norm{\rhob_0}_{L^1}$.}
\end{proof}
For any finite time we can also prove an upper bound on the numerical solution which is necessary for the case that $\imgrho_k=[\rho_m^k,\infty)$ since otherwise the maximum principle in Thm.~\ref{eq:maxprinciple1} already holds.
\begin{lemma}[L$^\infty$-bound]\label{lemma:Linf}
  Let the Asm.~\ref{asm} hold.
  Then for a solution computed by a numerical scheme \eqref{eq:1stscheme} satisfying Def.\ \ref{def:flux} and the CFL condition \eqref{eq:CFL} there exists a constant $\mathcal{K}_5>0$ depending on $\rhob_0, \etab$ and $\fb^k$ such that
  $$ \norm{\rho_{\Delta}^k\mleft( t^n,\cdot \mright)}_\infty \leq \exp\mleft( \mathcal{K}_5 t^n \mright)  \norm{\rhob_0}_\infty  $$
for $k=1,\dots,K$.
\end{lemma}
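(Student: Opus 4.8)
The plan is to derive the one-step recursion $\norm{\rho_\Delta^k(t^{n+1},\cdot)}_\infty \le (1+\mathcal{K}_5\dt)\,\norm{\rho_\Delta^k(t^n,\cdot)}_\infty$ and then to iterate it, using $1+x\le e^x$ and the fact that the cell averages of $\rho_0^k$ never exceed $\norm{\rho_0^k}_\infty$, so that $\norm{\rho_\Delta^k(t^0,\cdot)}_\infty \le \norm{\rho_0^k}_\infty$. As before I drop the superscript $k$. A preliminary observation does the heavy lifting: by the maximum principle (Thm.~\ref{eq:maxprinciple1}) every iterate obeys $\rho_{i,j}^n \ge \rho_m \ge 0$, so $|\rho_{i,j}^n| = \rho_{i,j}^n$ and $\norm{\rho_\Delta(t^n,\cdot)}_\infty = \sup_{i,j}\rho_{i,j}^n =: M_n$. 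This nonnegativity is exactly what lets a convex-combination bound be upgraded to a bound on the supremum.

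Next I would rewrite the update \eqref{eq:1stscheme} in incremental form, using the same device as in the BV estimate (Thm.~\ref{thm:BV}). Adding and subtracting the consistency values $\numflux{1}{i\pm\frac12}{j}{\rho_{i,j}^n}{\rho_{i,j}^n}$ (and their $x_2$-analogues) and introducing the nonnegative coefficients $a_{i-\frac12,j}^n,b_{i+\frac12,j}^n$ from \eqref{eq:ab} together with their $x_2$-counterparts $c_{i,j-\frac12}^n,d_{i,j+\frac12}^n$, the scheme becomes
\begin{align*}
\rho_{i,j}^{n+1} = {}& \Bigl(1 - a_{i-\frac12,j}^n - b_{i+\frac12,j}^n - c_{i,j-\frac12}^n - d_{i,j+\frac12}^n\Bigr)\rho_{i,j}^n + a_{i-\frac12,j}^n\rho_{i-1,j}^n + b_{i+\frac12,j}^n\rho_{i+1,j}^n \\
& {} + c_{i,j-\frac12}^n\rho_{i,j-1}^n + d_{i,j+\frac12}^n\rho_{i,j+1}^n - \lambda_1 A_{i,j}^n - \lambda_2 B_{i,j}^n,
\end{align*}
where $A_{i,j}^n \defeq \flux{1}{i+\frac12}{j}{\rho_{i,j}^n} - \flux{1}{i-\frac12}{j}{\rho_{i,j}^n}$ and $B_{i,j}^n$ is its $x_2$-analogue. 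The Lipschitz property (assumption~3 of Def.~\ref{def:flux}) gives $a_{i-\frac12,j}^n \le \lambda_1 L_{1,1}$, $b_{i+\frac12,j}^n \le \lambda_1 L_{1,2}$, $c_{i,j-\frac12}^n \le \lambda_2 L_{2,1}$ and $d_{i,j+\frac12}^n \le \lambda_2 L_{2,2}$, so the CFL condition \eqref{eq:CFL} ensures that the five weights are nonnegative and sum to one; the convex-combination part is therefore bounded above by $M_n$.

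It then remains to control the remainder $\lambda_1 A_{i,j}^n + \lambda_2 B_{i,j}^n$. Here the local state is frozen at $\rho_{i,j}^n$, so only the spatial argument and the convolution value vary between the two interfaces. A mean value expansion as in \eqref{eq:fdiffi} writes $A_{i,j}^n = \ddx f_1(\cdots)\,\dx + \nabla_\Rb f_1(\cdots)\mydot\mleft(\Rb_{i+\frac12,j}^n - \Rb_{i-\frac12,j}^n\mright)$; bounding each first-order derivative by $\mathcal{M}\rho_{i,j}^n$ through assumption~$(\fb)$, invoking the convolution estimate \eqref{eq:Rdiffi} and using $\lambda_1\dx = \dt$ gives $\lambda_1|A_{i,j}^n| \le \dt\,\mathcal{M}\rho_{i,j}^n\mleft(1 + \norm{\ddx\etab}_\infty\norm{\rhob_0}_{L^1}\mright)$, and analogously for $B_{i,j}^n$. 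Since $\rho_{i,j}^n \le M_n$, combining the two parts yields $\rho_{i,j}^{n+1} \le (1+\mathcal{K}_5\dt)M_n$ with $\mathcal{K}_5 = \mathcal{M}\bigl(2 + (\norm{\ddx\etab}_\infty + \norm{\ddy\etab}_\infty)\norm{\rhob_0}_{L^1}\bigr)$; taking the supremum over $i,j$ and iterating gives the claim.

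The main obstacle is conceptual rather than computational: because of the nonlocal coupling and the explicit $\xb$-dependence, the scheme fails to be monotone in the classical sense, so one cannot simply invoke a discrete maximum principle to obtain an $L^\infty$ bound. The resolution --- the same one already used for the BV and time-continuity estimates --- is to split the update into a genuinely monotone convex-combination part, handled by the CFL condition, and a lower-order spatial/convolution remainder of size $\mathcal{O}(\dt)$ per step, handled by the regularity assumption~$(\fb)$ and the kernel estimate \eqref{eq:Rdiffi}. Once this split is set up, the remaining estimates merely repeat computations already performed above.
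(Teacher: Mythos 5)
Your proof is correct and follows essentially the same route as the paper: rewriting the update in incremental form with the coefficients from \eqref{eq:ab}, using the CFL condition \eqref{eq:CFL} to obtain a convex-combination bound, and controlling the flux-difference remainder via the mean value expansion \eqref{eq:fdiffi}, assumption $(\fb)$ and the kernel estimate \eqref{eq:Rdiffi}, arriving at the same constant $\mathcal{K}_5$. The only cosmetic differences are that the paper keeps the two directions separate through the splitting $\rho_{i,j}^{n+1}=H_{i,j}^{1,n}+H_{i,j}^{2,n}$ (each carrying weight $\tfrac12$) and bounds $|H^{1,n}_{i,j}|$, $|H^{2,n}_{i,j}|$ directly by the triangle inequality, so it does not need your preliminary appeal to the maximum principle for nonnegativity.
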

\begin{proof}
      Using \eqref{eq:H1+H2} we determine estimates on $H_{i,j}^{1,n}$ and $H_{i,j}^{2,n}$. For this we \revc{rewrite $H_{i,j}^{1,n}$ as in the proof of Thm.~\ref{thm:BV} and use similar bounds on the difference $\flux{1}{i-\frac{1}{2}}{j}{\rho_{i,j}^n}  - \flux{1}{i+\frac{1}{2}}{j}{\rho_{i,j}^n$}} to obtain
  \begin{equation*}
    \begin{aligned}
    \left| H_{i,j}^{1,n} \right| \leq \mleft( \frac12 - a_{i-\frac{1}{2},j}^n - b_{i+\frac{1}{2},j}^n \mright) \norm{\rho_{\Delta}^k\mleft( t^n,\cdot \mright)}_{\infty}  + a_{i-\frac{1}{2},j}^n \norm{\rho_{\Delta}^k\mleft( t^n,\cdot \mright)}_{\infty} + b_{i+\frac{1}{2},j}^n \norm{\rho_{\Delta}^k\mleft( t^n,\cdot \mright)}_{\infty}\\
    + \lambda_1 \mathcal{M} \mleft| \rho_{i,j}^n \mright| \dx \mleft( 1 + \norm{\ddx \etab}_\infty \norm{\rhob_0}_{L^1} \mright)\\
    \leq \mleft( \frac12 + \dt \mathcal{M} (1 + \norm{\ddx \etab}_\infty \norm{\rhob_0}_{L^1}) \mright) \norm{\rho_{\Delta}^k\mleft( t^n,\cdot \mright)}_{\infty}.
    \end{aligned}
  \end{equation*} 
  With an analogous bound on $\left| H_{i,j}^{2,n} \right|$ we are left with
  \begin{align*}
    \norm{\rho_{\Delta}^k\mleft( t^{n+1},\cdot \mright)}_{\infty} \leq \mleft( 1 + \dt  \mathcal{K}_5 \mright) \norm{\rho_{\Delta}^k\mleft( t^n,\cdot \mright)}_{\infty}
    \leq \exp\mleft( t^{n+1} \mathcal{K}_5 \mright) \norm{\rhob_0}_{\infty}
  \end{align*}
  for $ \mathcal{K}_5 = \mathcal{M} (2 + (\norm{\ddx \etab}_\infty+\norm{\ddy \etab}_\infty) \norm{\rhob_0}_{L^1})$.
\end{proof}
The last ingredient to prove the convergence is a discrete analogue for the entropy conditions. 
Hence, we define the discrete entropy fluxes
  \begin{align*}
    \entropyfluxkn{1}{i+\frac{1}{2}}{j}{u}{w} &\defeq
    \numfluxkn{1}{i+\frac{1}{2}}{j}{u \land \kappa}{w \land \kappa} - \numfluxkn{1}{i+\frac{1}{2}}{j}{u \lor \kappa}{w \lor \kappa}\\
    \entropyfluxkn{2}{i}{j+\frac{1}{2}}{u}{w}
    &\defeq \numfluxkn{2}{i}{j+\frac{1}{2}}{u \land \kappa}{w \land \kappa} - \numfluxkn{2}{i}{j+\frac{1}{2}}{u \lor \kappa}{w \lor \kappa}
  \end{align*}
  with $a \land b \defeq \max\mleft( a,b \mright) $ and $a \lor b \defeq \min \mleft( a,b \mright)$.
  This allows us to show the following discrete entropy condition.
We note that the proof is analogue to \cite{ACT15,FSS23} and we do not go into detail here.
\begin{proposition}[Discrete entropy condition]
  Under the Asm.~\ref{asm} the numerical scheme \eqref{eq:1stscheme} with a numerical flux from Def.\ \ref{def:flux} fulfills the discrete entropy condition
  \begin{equation}\label{eq:discr entropy}
  \begin{aligned}
    \left| \rho_{i,j}^{k,n+1} - \kappa \right| &- \left| \rho_{i,j}^{k,n} - \kappa \right| 
    + \lambda_1 \mleft( \entropyfluxkn{1}{i+\frac{1}{2}}{j}{\rho_{i,j}^{k,n}}{\rho_{i+1,j}^{k,n}} - \entropyfluxkn{1}{i-\frac{1}{2}}{j}{\rho_{i-1,j}^{k,n}}{\rho_{i,j}^{k,n}} \mright)\\ 
     &+ \lambda_1 \textnormal{sgn}\mleft( \rho_{i,j}^{n+1}-\kappa \mright) \mleft( \fluxk{1}{i+\frac{1}{2}}{j}{\kappa} -\fluxk{1}{i-\frac{1}{2}}{j}{\kappa}\mright)\\
    &+ \lambda_2 \mleft(\entropyfluxkn{2}{i}{j+\frac{1}{2}}{\rho_{i,j}^{k,n}}{\rho_{i,j+1}^{k,n}} - \entropyfluxkn{2}{i}{j-\frac{1}{2}}{\rho_{i,j-1}^{k,n}}{\rho_{i,j}^{k,n}} \mright)\\
    &+ \lambda_2 \textnormal{sgn}\mleft( \rho_{i,j}^{n+1}-\kappa \mright)
    \mleft( \fluxk{2}{i}{j+\frac{1}{2}}{\kappa} - \fluxk{2}{i}{j-\frac{1}{2}}{\kappa} \mright)  \leq 0,
  \end{aligned}
\end{equation}
for $k=1,\dots,K$.
\end{proposition}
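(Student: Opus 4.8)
The plan is to run the classical Crandall–Majda monotone-scheme argument, adapted to the fact that the reduced fluxes carry an explicit dependence on the cell interface (through the position $\xb$ and the discrete convolution $\Rb$), which prevents the scheme from being exactly constant-preserving; this is precisely what produces the two $\textnormal{sgn}$-correction terms in \eqref{eq:discr entropy}. As in the earlier proofs I omit the superscript $k$. First I would collect the five-point update \eqref{eq:1stscheme} into a single map
\[
  \mathcal{H}\bigl(\rho_{i-1,j}^n,\rho_{i,j}^n,\rho_{i+1,j}^n,\rho_{i,j-1}^n,\rho_{i,j+1}^n\bigr)=\rho_{i,j}^{n+1},
\]
and verify that $\mathcal{H}$ is nondecreasing in each of its five arguments. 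Monotonicity in the four off-center arguments is immediate from assumption~2 of Def.~\ref{def:flux}: each numerical flux is nondecreasing in its first and nonincreasing in its second slot, and these slots enter $\mathcal{H}$ with exactly the matching signs. For the central argument I would use the Lipschitz bounds of assumption~3 together with the CFL condition \eqref{eq:CFL}: both flux contributions push the coefficient of $\rho_{i,j}^n$ downward by at most $\lambda_1(L_{1,1}+L_{1,2})+\lambda_2(L_{2,1}+L_{2,2})\le\tfrac12+\tfrac12=1$, so the effective coefficient is bounded below by $0$, exactly as in the maximum-principle proof of Thm.~\ref{eq:maxprinciple1}.

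The key algebraic step is to evaluate $\mathcal{H}$ on the argument lists obtained by replacing every $\rho_\bullet^n$ either by $\rho_\bullet^n\land\kappa$ or by $\rho_\bullet^n\lor\kappa$. Reading off the definition of the discrete entropy fluxes $\mathcal{F}$ stated just above the proposition, and using $\rho\land\kappa-\rho\lor\kappa=|\rho-\kappa|$ together with the fact that each flux difference $F(\,\cdot\,\land\kappa)-F(\,\cdot\,\lor\kappa)$ is by definition the corresponding $\mathcal{F}$, a direct (non-computational) substitution yields the exact identity
\[
  \mathcal{H}(\,\cdot\,\land\kappa)-\mathcal{H}(\,\cdot\,\lor\kappa)
  =\left|\rho_{i,j}^{n}-\kappa\right|
  -\lambda_1\bigl(\mathcal{F}_{1,n}^{i+\frac12,j,\kappa}-\mathcal{F}_{1,n}^{i-\frac12,j,\kappa}\bigr)
  -\lambda_2\bigl(\mathcal{F}_{2,n}^{i,j+\frac12,\kappa}-\mathcal{F}_{2,n}^{i,j-\frac12,\kappa}\bigr).
\]

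On the other hand, monotonicity of $\mathcal{H}$ gives the elementary lattice bounds $\mathcal{H}(\,\cdot\,\land\kappa)\ge\mathcal{H}(\,\cdot\,)\land\mathcal{H}(\kappa,\dots,\kappa)$ and $\mathcal{H}(\,\cdot\,\lor\kappa)\le\mathcal{H}(\,\cdot\,)\lor\mathcal{H}(\kappa,\dots,\kappa)$ (recall $\land=\max$, $\lor=\min$), so their difference is at least $\bigl|\rho_{i,j}^{n+1}-\mathcal{H}(\kappa,\dots,\kappa)\bigr|$. Here is where the nonlocality bites: by consistency
\[
  \mathcal{H}(\kappa,\dots,\kappa)=\kappa-\mathcal{C},\qquad
  \mathcal{C}\defeq\lambda_1\bigl(f_{1,n}^{i+\frac12,j}(\kappa)-f_{1,n}^{i-\frac12,j}(\kappa)\bigr)
  +\lambda_2\bigl(f_{2,n}^{i,j+\frac12}(\kappa)-f_{2,n}^{i,j-\frac12}(\kappa)\bigr),
\]
which differs from $\kappa$ precisely by the interface flux mismatch $\mathcal{C}$. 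Applying the elementary inequality $|x+c|\ge|x|+\textnormal{sgn}(x)\,c$ with $x=\rho_{i,j}^{n+1}-\kappa$ and $c=\mathcal{C}$ extracts the two $\textnormal{sgn}(\rho_{i,j}^{n+1}-\kappa)$ terms. Combining this lower bound with the exact identity above and rearranging gives \eqref{eq:discr entropy}. The only genuinely nonstandard point compared with \cite{ACT15,FSS23} is this loss of constant preservation, i.e.\ correctly identifying $\mathcal{H}(\kappa,\dots,\kappa)=\kappa-\mathcal{C}$ and routing $\mathcal{C}$ through the $\textnormal{sgn}$-inequality; once that is handled, the remaining manipulations are routine, which is why I would state the result and only indicate these modifications.
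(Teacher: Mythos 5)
Your proposal is correct and takes essentially the same route as the paper: the paper omits the details and defers precisely to the classical Crandall--Majda monotone-scheme argument of \cite{ACT15,FSS23}, which is what you execute, including the two points that matter here --- freezing the interface-indexed nonlocal terms so that the update map $\mathcal{H}$ is monotone under the CFL condition \eqref{eq:CFL}, and extracting the $\textnormal{sgn}$-correction terms from $\mathcal{H}(\kappa,\dots,\kappa)=\kappa-\mathcal{C}$ via $|x+c|\ge |x|+\textnormal{sgn}(x)\,c$. I see no gaps.
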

Now, we are ready to prove the main result:
\begin{proof}[Proof of Thm.~\ref{thm:mainres}]
  Let Asm.~\ref{asm} and $\rhob_0 \in \text{BV}(\R^2)$ hold and $\rhob_{\Delta \xb}$ be a sequence where $\dx$ tends to zero and consequently also $\dy$ by the fixed relation $\dx = \delta \dy$ and $\dt$ by the CFL condition \eqref{eq:CFL}. 
With the bound on the $L^1$-norm (Cor.~\ref{cor:L1}), the total variation in space (Thm.\ \ref{thm:BV}) and the bound on the time continuity (Prop.\ \ref{prop:timecont})  there exists for each $\rho^k_{\Delta \xb}$, $k=1,\ldots,K$ a uniformly convergent subsequence in $L_{loc}^1(\R^2)$ on every bounded interval $[0,T]$ \cite[Lem.~1]{San83}. The approximated convolution terms \eqref{eq:Ri} and \eqref{eq:Rj} converge for kernels fulfilling $(\etab)$ to the integral terms. Thus, with a Lax-Wendroff-type argument we can prove that
the limit of each subsequence is a weak entropy solution of \eqref{eq:system} in the sense of \ref{Def:entropysol}. Later, in Thm.~\ref{thm:uniqueness} we will prove the uniqueness of the weak entropy solution and together with Cor.~\ref{cor:L1} we obtain that the entire sequence $\rhob_{\Delta \xb}$ converges to a unique weak entropy solution in $\mleft( L_{loc}^1([0,T] \times \R^2) \mright)^K$.
\end{proof}

\section{Error estimate}\label{sec:errorestimate}
    After having established the convergence of the monotone-based numerical schemes, we will investigate the rate of convergence via error estimates. 
In addition, we provide a uniqueness result for weak entropy solutions of \eqref{eq:system}.
In \cite{aggarwal2024well}, the authors prove a Kuznetsov-type lemma as well as error estimates for a similar nonlocal system as \eqref{eq:system} in one spatial dimension. 
Although the result of the Kuznetsov-type lemma is still valid in several space \reva{dimensions} \cite[Rem.~5.3]{aggarwal2024well},
we consider a more general flux function and hence, we will discuss the necessary modifications of the proof as well as the error estimate in this section. 
We underline that we do not use dimensional splitting such that the proof of the error estimate in \cite{aggarwal2024well} cannot be directly applied.

As aforementioned, we follow \cite{aggarwal2024well} and define $Q_T=[0,T]\times \R^2$ and $\phi:Q_T^2\to \R$ by
$$ \phi(t,\xb,s,\yb)\coloneq \omega_\epsilon(x_1-y_1) \omega_\epsilon(x_2-y_2) \omega_{\epsilon_0}(t-s), $$
where $\omega_a(x)=\omega(x/a)/a,\ a>0$, and $\omega$ is a standard symmetric mollifier
with compact support on $[-1,1]$. 
In addition, we assume that $\int_{\R}\omega_a(x) dx=1$ and $\int_{\R} |\omega_a'(x)| dx=1/a$.
We note that $\phi$ is symmetric, $\nabla_\xb \phi= -\nabla_\yb \phi$ and $\phi_t=-\phi_s$. 

For $\varrhob,\ub\in L^1\cap L^\infty (Q_T;\R^K) $, $\phi\in \mathbf{C}_c^\infty(Q_T;\R^+)$ and for each $k\in \{1,\dots,K\}$, we define the entropy functionals by
\begin{align*}
    \Lambda_T^k(\varrho^k,\phi,\kappa)\coloneq & \int_{Q_T} \mleft[ \left| \varrho^k - \kappa \right| \ddt \phi + \textnormal{sgn}(\varrho^k - \kappa)\mleft( \fb^k\mleft( t, \xb, \varrho^k, \contconv{\varrhob} \mright) - \fb^k\mleft( t, \xb, \kappa, \contconv{\varrhob} \mright)
    \mright) \mydot \nabla_\xb \phi \mright] d\xb \; dt \\
                                            & - \int_{Q_T} \textnormal{sgn}(\varrho^k-\kappa) \Div \fb^k\mleft( t, \xb, \kappa, \contconv{\varrhob} \mright)   \phi d\xb \; dt \\
                                            & - \int_{\R^2} \left| \varrho^k(T,\xb)-\kappa \right| \phi(T,\xb) d\xb + \int_{\R^2} \left| \varrho_0^k(\xb)-\kappa \right| \phi(0,\xb) d\xb\revb{,} \\
    \Lambda_{\epsilon,\epsilon_0}^k(\varrho^k,u^k)\coloneq& \int_{Q_T} \Lambda_T^k\mleft( \varrho^k(\cdot,\cdot),\phi(\cdot,\cdot,s,\yb),u^k(s,\yb)\mright) d \yb\; ds\revb{,}
\end{align*}
\revb{\text{and}}
\begin{align*}
    \mathcal{U}\coloneq&\{ \ub:\, Q_T\to \R^K: \ub(t,\cdot)\in (L^\infty\cap BV)(\R^2;\R^K),\ \|u^k(t,\cdot)\|_{L^1(\R^2)}=\|u^k(0,\cdot)\|_{L^1(\R^2)}\\
    &\text{ for }t\in [0,T]\text{ and }k=1,\dots,K\}\revb{.}  
\end{align*}

\begin{lemma}[A Kuznetsov-type lemma for two-dimensional nonlocal systems of conservation laws]\label{lem:Kuznetsov}
    Let $\varrhob$ be an entropy solution and $\ub\in \mathcal{U}$, then for any finite time $T>0$ it holds
    \begin{equation}
      \| \varrhob(T,\cdot) - \ub(T,\cdot) \|_{L^1(\R^2;\R^K)} \leq \mathcal{K}_6 \left(\sum_{k=1}^{K} \mleft(\gamma(u^k,\epsilon_0)-\Lambda_{\epsilon,\epsilon_0}(u^k,\varrho^k)\mright)
      +\| \varrhob_0-\ub_0\|_{L^1(\R^2;\R^K)} +K(\epsilon+\epsilon_0)\right),
    \end{equation}
    where $\gamma(u^k,\epsilon_0)\coloneq \displaystyle{\sup_{\substack{|t_1-t_2|<\epsilon_0,\\ 0\leq t_1\leq t_2 \leq T}}}
    \|u^k(t_1,\cdot )-u^k(t_2,\cdot )\|_{L^1(\R^2)}$ and $\mathcal{K}_6$ depends on $T, \fb, \mathcal{M}, \etab, \varrhob, \ub$.
\end{lemma}
\begin{proof}
    The proof is very similar to the one of \cite[Lem.~4.1]{aggarwal2024well}.
    Hence, we focus on the most significant differences which occur due to the different flux function and two spatial dimensions.
    For any ${k\in \{1,\dots,K\}}$, we add the entropy functionals and use the symmetry of $\phi$ to get 
        $$\Lambda_{\epsilon,\epsilon_0}^k(\varrho^k,u^k)+\Lambda_{\epsilon,\epsilon_0}^k(u^k,\varrho^k)=I_0^k-I_T^k+I_{\nabla_\xb \phi}^k+I_{\phi}^k$$
    with
    \begin{align*}
        I_0^k\coloneq&\int_{Q_T} \int_{\R^2}\left( \left| \varrho_0^k(\xb)-u^k(s,\yb) \right| + \left| u_0^k(\yb)-\varrho^k(s,\xb) \right| \right)\phi(t,\xb,0,y)d\yb \; d\xb \; dt\\
        I_T^k\coloneq&\int_{Q_T} \int_{\R^2}\left( \left| \varrho^k(T,\xb)-u^k(s,\yb) \right| + \left| u^k(T,\yb)-\varrho^k(s,\xb) \right| \right)\phi(t,\xb,T,y)d\yb \; d\xb \; dt\\
        I_{\nabla_\xb \phi}^k\coloneq& \int_{Q_T^2} \textnormal{sgn}(\varrho^k(t,\xb) - u^k(s,\yb)) \mleft( \fb^k\mleft( t, \xb, \varrho^k(t,\xb), \contconv{\varrhob} \mright) 
                                - \fb^k\mleft( t, \xb, u^k(s,\yb), \contconv{\varrhob} \mright) \mright. \\
                               &\mleft. + \fb^k\mleft( s, \yb, u^k(s,\yb), \contconarg{\ub}{s}{\yb} \mright) 
                                -\fb^k\mleft( s, \yb, \varrho^k(t,\xb),\contconarg{\ub}{s}{\yb} \mright) \mright) \mydot \nabla_\xb \phi(t,\xb,s,\yb) dt \; d\xb \; ds \; d\yb\\
        I_{\phi}^k\coloneq&  - \int_{Q_T^2} \textnormal{sgn}(\varrho^k(t,\xb)-u^k(s,\yb))\mleft( \Div \fb^k\mleft( t, \xb, u^k(s,\yb), \contconv{\varrhob}\mright)\mright.\\
                          & \qquad  \mleft.- \textnormal{div}_\yb \, \fb^k\mleft( s, \yb, \varrho^k(t,\xb), \contconarg{\ub}{s}{\yb} \mright)  \mright) \phi(t,\xb,s,\yb) dt \; d\xb \; ds \; d\yb.
    \end{align*}
    Since $\varrhob$ is an entropy solution of \eqref{eq:system}, $\Lambda_{\epsilon,\epsilon_0}^k(\varrho^k,u^k)\geq 0$ holds, such that
    \begin{equation}\label{eq:Kuznetsov}
        I_T^k \leq -\Lambda_{\epsilon,\epsilon_0}^k(u^k,\varrho^k)+I_0^k+I_{\nabla_\xb \phi}^k+I_{\phi}^k .
    \end{equation}
    The terms $I_T^k$ and $I_0^k$ can be estimated following \cite{ghoshal2021godunov}, see Claim 1 and Claim 2 in the proof of Lem.~4.1.
    Hence, we obtain similar estimates as in \cite{aggarwal2024well}. 
    The most significant differences to the proof of \cite[Lem.~4.1]{aggarwal2024well} \reva{appear} for the estimation of $I_{\nabla_\xb \phi}^k+I_{\phi}^k$. 
    Integration by parts yields
    \begin{align*}
        I_{\nabla_\xb \phi}^k+I_{\phi}^k=& - \int_{Q_T^2} \textnormal{sgn}(\varrho^k(t,\xb)-u^k(s,\yb))
        \mleft( \Div \mleft[\fb^k\mleft( t, \xb, \varrho^k(t,\xb), \contconv{\varrhob}\mright)\mright.\mright.\\
        &\mleft.\mleft.-\fb^k\mleft( s, \yb, \varrho^k(t,\xb), \contconarg{\ub}{s}{\yb}\mright)\mright]
        - \textnormal{div}_\yb \, \fb^k\mleft( s, \yb, \varrho^k(t,\xb), \contconarg{\ub}{s}{\yb} \mright)  \mright) \phi(t,\xb,s,\yb) dt \; d\xb \; ds \; d\yb.
    \end{align*}
    In the following the partial derivative of $\varrho$ needs to be understood in the sense of measures. We can compute 
    \begin{align*}
        &I_{\nabla_\xb \phi}^k+I_{\phi}^k\\
        =& - \int_{Q_T^2} \textnormal{sgn}(\varrho^k(t,\xb)-u^k(s,\yb))
        \bigg(\sum_{\ell=1}^2 (\partial_{x_\ell} f_\ell^k)(t,\xb,\varrho^k(t,\xb),\contconv{\varrhob})-(\partial_{x_\ell} f_\ell^k)(s,\yb,\varrho^k(t,\xb),\contconarg{\ub}{s}{\yb})\\
        &+(\partial_{x_\ell} \varrho(t,\xb))\mleft((\partial_{\varrho} f_\ell^k)(t,\xb,\varrho^k(t,\xb),\contconv{\varrhob})-(\partial_{\varrho} f_\ell^k)(s,\yb,\varrho^k(t,\xb),\contconarg{\ub}{s}{\yb})\mright)\\
        &+(\nabla_{R} f_\ell^k)(t,\xb,\varrho^k(t,\xb),\contconv{\varrhob})\;\mydot\; \partial_{x_\ell} \contconv{\varrhob}
        -(\nabla_{R}  f_\ell^k)(s,\yb,\varrho^k(t,\xb),\contconarg{\ub}{s}{\yb})\;\mydot\; \partial_{y_\ell} \contconarg{\ub}{s}{\yb})\bigg) \\
        &\quad \phi(t,\xb,s,\yb) dt \; d\xb \; ds \; d\yb.
    \end{align*}
    For the last term we add and subtract $(\nabla_{R}  f_\ell^k)(s,\yb,\varrho^k(t,\xb),\contconarg{\ub}{s}{\yb})\;\mydot\; \partial_{x_\ell} \contconv{\varrhob}$,
    such that we can estimate (by using the Asm.~\ref{asm})
    \begin{align}\label{eq:UkUxk}
        &I_{\nabla_\xb \phi}^k+I_{\phi}^k\\
        \leq &\; \mathcal{C}_1\int_{Q_T^2} \bigg(\sum_{\ell=1}^2 \mleft( (1+|\partial_{x_\ell} \contconv{\varrhob}|)|\varrho^k(t,\xb)|+ |\partial_{x_\ell}\varrho^k(t,\xb)| \mright)\\
        & \qquad \qquad \qquad \mleft(  |t-s|+\|\xb-\yb\|_2+\| \contconv{\varrhob}- \contconarg{\ub}{s}{\yb}\|_2 \mright)\\
        &\qquad \qquad \qquad + |\varrho^k(t,\xb)| \| \partial_{x_\ell} \contconv{\varrhob}- \partial_{y_\ell} \contconarg{\ub}{s}{\yb}\|_2
        \bigg)\phi(t,\xb,s,\yb) dt \; d\xb \; ds \; d\yb.
    \end{align}
    Here, $\mathcal{C}_1$ depends on $\mathcal{M}$ and $\fb$. 
    Further, we can use $|\partial_{x_\ell} \contconv{\varrhob}|\leq \|\partial_{x_\ell}\eta\|_{L^\infty(\R^2)}\displaystyle{\sup_{\tau \in [0,T]}}\|\varrhob(\tau,\cdot)\|_{L^1(\R^2;\R^K)}$ in the first line.
    The remaining terms can be estimated by using either the BV-norm or $L^1$-norm of $\varrho^k$. 
    In particular, for $\ell\in \{1,2\}$
    \begin{align*}
    \int_{Q_T^2} &\mleft( |\varrho^k(t,\xb)|+ |\partial_{x_\ell}\varrho^k(t,\xb)| \mright) 
        \mleft(  |t-s|+\|\xb-\yb\|_2\mright)   \phi(t,\xb,s,\yb) dt \; d\xb \; ds \; d\yb\\
        & \leq (\|\varrho^k\|_{L^1(Q_T)}+|\varrho^k|_{L^\infty([0,T];BV(\R^2))}) (\epsilon+\epsilon_0).
    \end{align*}
    The differences in the nonlocal terms can be estimated as in \cite{aggarwal2024well}, such that we obtain\footnote{In particular, we refer for the remaining terms in the first two lines in \eqref{eq:UkUxk} to the estimation of $I_{\mathcal{U}^k}$ on \cite[pp.\ 3369 -- 3371]{aggarwal2024well}. The steps are completely analogue, using one time the BV-norm and one time the $L^1$-norm of $\varrho^k$. The last line of \eqref{eq:UkUxk} is as well analogue to $I_{\mathcal{U}_x^k}$ in \cite{aggarwal2024well}.} 
    \begin{align*}
        I_{\nabla_\xb \phi}^k+I_{\phi}^k\leq \mathcal{C}_2 (\epsilon+\epsilon_0) +\mathcal{C}_3 \int_0^T\| \varrhob(t,\cdot)-\ub(t,\cdot)\|_{L^1(\R;\R^K)} dt
    \end{align*}
    for constants $\mathcal{C}_2,\mathcal{C}_3>0$ depending on $T, \fb, \etab, \varrhob, \ub$.
    The lemma now follows from substituting all the estimates into \eqref{eq:Kuznetsov} and applying Gronwall's lemma.
\end{proof}
Similar to \cite{aggarwal2024well} the uniqueness of weak entropy solutions follow from Lem.~\ref{lem:Kuznetsov}.
The proof is the same as for \cite[Thm.~4.2]{aggarwal2024well}:
\begin{theorem}\label{thm:uniqueness}
 Let $\varrhob,\ub$ be two entropy solutions of the system \eqref{eq:system} with initial data
    $\varrhob_0,\ \ub_0$ respectively, both fulfilling assumption $(\rhob_0)$.
    For any finite time $T>0$ the following holds
    \begin{align*}   
           \|\varrhob(T,\cdot)-\ub(T,\cdot)\|_{L^1(\R^2;\R^K)}\leq \mathcal{K}_6 \|\varrhob_0-\ub_0\|_{L^1(\R^2;\R^K)}
    \end{align*}
    \revb{with $\mathcal{K}_6$ as in Lem.~\ref{lem:Kuznetsov}.}
    In particular, a weak entropy solution to \eqref{eq:system} is unique.   
\end{theorem}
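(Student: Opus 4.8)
The plan is to apply the Kuznetsov-type estimate of Lem.~\ref{lem:Kruznetsov} and then exploit that \emph{both} $\varrhob$ and $\ub$ are entropy solutions, playing them against one another. First I would verify that $\ub\in\mathcal{U}$: by Def.~\ref{Def:entropysol} an entropy solution lies in $\mathbf{C}^0([0,T];L^1(\R^2;\R^K))\cap L^\infty([0,T];BV)$, and since $\fb^k(\cdot,\cdot,\rho_m^k,\cdot)=0$ with $\rho^k\geq\rho_m^k\geq 0$ the $L^1$-norm is conserved in time (the continuous analogue of Cor.~\ref{cor:L1}), so the constraint $\|u^k(t,\cdot)\|_{L^1}=\|u^k(0,\cdot)\|_{L^1}$ is satisfied. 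Hence Lem.~\ref{lem:Kruznetsov} applies with $\varrhob$ in the role of the entropy solution and $\ub$ as the comparison function.

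The key observation is that the term $-\Lambda_{\epsilon,\epsilon_0}^k(u^k,\varrho^k)$ can simply be discarded. Indeed,
$\Lambda_{\epsilon,\epsilon_0}^k(u^k,\varrho^k)=\int_{Q_T}\Lambda_T^k\bigl(u^k(\cdot,\cdot),\phi(\cdot,\cdot,s,\yb),\varrho^k(s,\yb)\bigr)\,d\yb\,ds$, and for each fixed $(s,\yb)$ the inner functional is exactly the Kru\v{z}kov entropy functional for $u^k$ tested against the nonnegative weight $\phi(\cdot,\cdot,s,\yb)$ at the constant level $\kappa=\varrho^k(s,\yb)$. Since $\ub$ is itself an entropy solution, $\Lambda_T^k(u^k,\phi,\kappa)\geq 0$ for every $\kappa\in\R$ and every admissible test function, so $\Lambda_{\epsilon,\epsilon_0}^k(u^k,\varrho^k)\geq 0$. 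Bounding this term from above by zero in Lem.~\ref{lem:Kruznetsov} gives
\begin{equation*}
  \|\varrhob(T,\cdot)-\ub(T,\cdot)\|_{L^1(\R^2;\R^K)}\leq \mathcal{K}_6\left(\sum_{k=1}^K\gamma(u^k,\epsilon_0)+\|\varrhob_0-\ub_0\|_{L^1(\R^2;\R^K)}+K(\epsilon+\epsilon_0)\right).
\end{equation*}

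It then remains to let $\epsilon,\epsilon_0\to 0$. Because $\mathcal{K}_6$ depends only on $T,\fb,\mathcal{M},\etab,\varrhob,\ub$ and not on the mollification parameters, it stays bounded along the limit; the term $K(\epsilon+\epsilon_0)$ vanishes trivially; and the time-continuity modulus $\gamma(u^k,\epsilon_0)\to 0$ since $u^k\in\mathbf{C}^0([0,T];L^1(\R^2))$ is uniformly continuous on the compact interval $[0,T]$. This leaves the asserted bound $\|\varrhob(T,\cdot)-\ub(T,\cdot)\|_{L^1(\R^2;\R^K)}\leq\mathcal{K}_6\|\varrhob_0-\ub_0\|_{L^1(\R^2;\R^K)}$, and specializing to $\varrhob_0=\ub_0$ forces $\varrhob(T,\cdot)=\ub(T,\cdot)$ for every $T>0$, i.e.\ uniqueness.

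I expect the only delicate points to be the passage to the limit: one must check that $\mathcal{K}_6$ is genuinely independent of $\epsilon,\epsilon_0$ and that $\gamma(u^k,\epsilon_0)\to 0$, since these two facts are precisely what let the limit annihilate the residual error terms while preserving the initial-data contribution. The sign argument $\Lambda_{\epsilon,\epsilon_0}^k(u^k,\varrho^k)\geq 0$ is the conceptual heart of the proof, but it is routine once the two solutions have been correctly assigned to their respective roles in the symmetric functional.
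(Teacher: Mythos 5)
Your proposal is correct and follows essentially the same route as the paper, which simply defers to the proof of \cite[Thm.~4.2]{aggarwal2024well}: that argument is exactly yours, namely discarding $-\Lambda_{\epsilon,\epsilon_0}^k(u^k,\varrho^k)\leq 0$ because $\ub$ is itself an entropy solution, and then sending $\epsilon,\epsilon_0\to 0$ so that $\gamma(u^k,\epsilon_0)$ and $K(\epsilon+\epsilon_0)$ vanish while $\mathcal{K}_6$ stays fixed. Your explicit verification that $\ub\in\mathcal{U}$ (positivity via comparison with the constant solution $\rho_m^k$ plus mass conservation) is a detail the paper leaves implicit, and it is handled correctly.
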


  The Kuznetsov-type lemma is not only used to obtain uniqueness of the weak entropy solution but also for an error estimate for the numerical approximation to this solution. We will obtain a convergence rate of 0.5. In the local case, it is already proven \cite{Sab97} that this is the optimal rate that can be achieved for nonlinear (local) fluxes.  
For this we first consider an estimate on the relative entropy functional $\Lambda_{\epsilon,\epsilon_0}^k(\rho_\Delta^k,\varrho^k)$ that can be inserted into the estimate of Lem.~\ref{lem:Kuznetsov}.

\begin{lemma}
    For a numerical solution $\rhob_\Delta$ computed by a scheme \eqref{eq:1stscheme} with a flux function fulfilling Def.~\ref{def:flux} using a grid that satisfies the fixed relation $\dx = \delta \dy$ and the CFL condition \eqref{eq:CFL} the relative entropy functional satisfies
    \begin{align*}
        -\Lambda_{\epsilon,\epsilon_0}^k(\rho_\Delta^k,\varrho^k) \leq  \mathcal{K}_7 \mleft( \frac{\dx}{\varepsilon} + \frac{\dt}{\varepsilon_0} + \dt + \dx \mright) 
    \end{align*}
    for every $k=1,\ldots,K$ and a constant $\mathcal{K}_7$ depending on $\rhob_0, \etab, \fb, \delta$, and the Lipschitz constants in \mbox{Def.~\ref{def:flux}}.
\end{lemma}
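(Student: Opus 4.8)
The plan is to insert the piecewise-constant numerical solution $\rho_\Delta^k$ into the relative entropy functional $\Lambda_{\epsilon,\epsilon_0}^k(\rho_\Delta^k,\varrho^k)$ and to show that, up to the asserted error, it is controlled by the smooth-weight average of the discrete entropy inequality \eqref{eq:discr entropy}. First I would exploit that $\rho_\Delta^k$ is constant on each space–time cell $[t^n,t^{n+1})\times C_{i,j}$: this turns every integral in $\Lambda_T^k(\rho_\Delta^k,\phi(\cdot,\cdot,s,\yb),\varrho^k(s,\yb))$ into a sum over $i,j\in\Z$ and $n\in\N$ of cell-wise contributions, with $\kappa=\varrho^k(s,\yb)$ in the role of the Kru\v{z}kov level. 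The time-derivative term is rewritten by integrating $\partial_t\phi$ over each $[t^n,t^{n+1})$ and performing an Abel (summation-by-parts) step in $n$, producing the discrete differences $|\rho^{k,n+1}_{i,j}-\kappa|-|\rho^{k,n}_{i,j}-\kappa|$ together with boundary contributions at $t=0$ and $t=T$ that cancel the corresponding explicit terms of $\Lambda_T^k$. The flux terms are treated analogously by summation by parts in $i$ and $j$, matching the continuous Kru\v{z}kov entropy fluxes against the discrete entropy fluxes defined above. Since \eqref{eq:discr entropy} holds for every $\kappa\in\R$, it may be applied pointwise with $\kappa=\varrho^k(s,\yb)$ and the resulting inequality integrated in $(s,\yb)$ over $Q_T$ at the end.

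After these rearrangements the leading contribution is exactly the left-hand side of \eqref{eq:discr entropy} tested against the nonnegative weight $\phi$, so by the discrete entropy inequality it has the favourable sign and is harmless. What remains is a sum of error terms of two types. The first collects the smoothing/Taylor remainders produced when the smooth $\phi$ and its derivatives are frozen at grid nodes instead of being integrated exactly; these are schematically of order $\dt\,\|\partial_t\phi\|_{L^1}$ and $\dx\,\|\nabla_\xb\phi\|_{L^1}$, and since for the chosen mollifier $\int|\partial_t\phi|\sim\varepsilon_0^{-1}$ and $\int|\nabla_\xb\phi|\sim\varepsilon^{-1}$, combining them with the uniform bounds of Cor.~\ref{cor:L1}, Lem.~\ref{lemma:Linf}, Thm.~\ref{thm:BV} and the time-continuity of Prop.~\ref{prop:timecont} yields the contributions $\dt/\varepsilon_0$ and $\dx/\varepsilon$. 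The second type gathers the consistency error between the numerical flux functions and the exact flux $\fb^k$, and the midpoint-rule error in the convolutions \eqref{eq:Ri} and \eqref{eq:Rj}. Using consistency, Lipschitz continuity and assumption~4 of Def.~\ref{def:flux}, the estimate \cite[Lem.~A.2]{ACG15} and the mixed-difference bound \eqref{eq:4Rdiff} for the discrete convolutions, together with the a priori estimates, each such term is $O(\dt)$ or $O(\dx)$ uniformly in $(s,\yb)$; integrating in $(s,\yb)$ and using the $L^1$-conservation of $\rho_\Delta^k$ keeps the constants finite and collects them into the $\dt+\dx$ part of the bound.

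The main obstacle I expect is the genuinely two-dimensional bookkeeping caused by the absence of dimensional splitting combined with the full $(t,\xb)$-dependence of the flux. Unlike in \cite{aggarwal2024well}, the discrete entropy fluxes in the two coordinate directions cannot be analysed separately, so the summation by parts must be carried out simultaneously in $i$ and $j$, and the mixed remainders—in particular those coupling a spatial shift of $\phi$ with a shift of the discrete convolution $\Rb_{i\pm\frac12,j}^n$, $\Rb_{i,j\pm\frac12}^n$—must be estimated jointly rather than direction by direction. This is precisely where the mixed second-difference bound \eqref{eq:4Rdiff} and the structural assumption~4 of Def.~\ref{def:flux} enter, playing the same role as in the BV estimate of Thm.~\ref{thm:BV}. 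Once these cross terms are shown to be of order $\dx$ (equivalently $\dy$, via $\dx=\delta\dy$), all error contributions take the stated form, the constant $\mathcal{K}_7$ depends only on $\rhob_0,\etab,\fb,\delta$ and the Lipschitz constants of Def.~\ref{def:flux}, and the resulting bound is exactly the one that will be fed into Lem.~\ref{lem:Kruznetsov} to produce the $\mathcal{O}(\sqrt{\dt})$ convergence rate.
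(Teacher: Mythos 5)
Your proposal follows essentially the same route as the paper's proof: insert the piecewise-constant numerical solution into the entropy functional, sum by parts in time and space, apply the discrete entropy inequality \eqref{eq:discr entropy} with $\kappa=\varrho^k(s,\yb)$ integrated against the nonnegative mollifier weight so that the leading term has the favourable sign, and bound the remainders---test-function smoothing errors of size $\frac{\dx}{\varepsilon}+\frac{\dt}{\varepsilon_0}$ and flux/convolution consistency errors of size $\dt+\dx$---using the a priori bounds. One caveat: the ``main obstacle'' you anticipate in your last paragraph does not actually arise in the paper's argument, since after the single (unsplit) discrete entropy inequality is applied the divergence terms decompose additively by coordinate direction and each direction is estimated separately (the paper treats only the $x_1$-terms and notes the $x_2$-terms are analogous); in particular, neither assumption~4 of Def.~\ref{def:flux} nor the mixed second-difference bound \eqref{eq:4Rdiff} is needed in this proof---those enter only the BV estimate of Thm.~\ref{thm:BV}---whereas here the one-directional bound \eqref{eq:Rdiffi}, the mean value theorem with assumption $(\fb)$, and the Lipschitz continuity and consistency of the numerical fluxes suffice.
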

\begin{proof}
    \revb{We present a sketch of the proof here, and provide details in Appendix~\ref{sec:appendix}. 
  The proof is again based on the one of \cite[Lem.~5.1]{aggarwal2024well}. We introduce the notation of time-space cells $C_{i,j}^n = [t^n,t^{n+1}) \times C_{i,j}$. 
    Further, we define for $n \in \{0,\ldots,N_T \}, i,j \in \Z,$ ${k=1,\ldots,K}, (t,\xb) \in \overline{Q}_T$
    \begin{align*}
        p_{i,j}^{k,n}(\kappa, R) \coloneq \textnormal{sgn}\mleft( \rho_{i,j}^{k,n} - \kappa \mright) \mleft( \fb^k\mleft( t,\xb,\rho_{i,j}^{k,n},R \mright) - \fb^k\mleft( t,\xb,\kappa,R \mright)  \mright)
    .\end{align*}
    We consider $-\Lambda_{\epsilon,\epsilon_0}^k(\rho_\Delta^k,\varrho^k)$ for the discrete $\rhob_\Delta$, perform summation by parts and add a zero to obtain
    \begin{align*}
        -\Lambda_{\epsilon,\epsilon_0}^k&(\rho_\Delta^k,\varrho^k) =\int_{Q_T} \sum_{i,j \in \Z} \sum_{n=0}^{N_T-1} \mleft[ \left| \rho_{i,j}^{k,n+1} - \varrho^k(s,\yb) \right| - \left| \rho_{i,j}^{k,n} - \varrho^k(s,\yb) \right| \mright] \int_{C_{i,j}}\phi\mleft( s,\yb,t^{n+1},\xb \mright) d\xb \, ds \, d\yb\\
        &- \int_{Q_T} \sum_{i,j \in \Z} \sum_{n=0}^{N_T-1} \int_{C_{i,j}^n} p_{i,j}^{k,n}(\varrho^k(s,\yb), \Rb_{i+\frac{1}{2},j}^n)  \mydot \nabla_\xb \phi dt \, d\xb \, ds \, d\yb  \\
        &- \int_{Q_T} \sum_{i,j \in \Z} \sum_{n=0}^{N_T-1} \int_{C_{i,j}^n} \mleft[ p_{i,j}^{k,n}(\varrho^k(s,\yb), \contconv{\rhob_\Delta}) - p_{i,j}^{k,n}(\varrho^k(s,\yb), \Rb_{i+\frac{1}{2},j}^n)\mright] \mydot \nabla_\xb \phi dt \, d\xb \, ds \, d\yb  \\
        & + \int_{Q_T} \sum_{i,j \in \Z} \sum_{n=0}^{N_T-1} \int_{C_{i,j}^n}\textnormal{sgn}\mleft( \rho_{i,j}^{k,n} - \varrho^k(s,\yb) \mright) \Div \fb^k\mleft( t, \xb, \varrho^k(s,\yb), \contconv{\rhob_\Delta} \mright)   \phi \, dt \, d\xb \, ds \, d\yb \\
        & \coloneq \biglambda_1 + \biglambda_2' + \bigeps_{2} + \biglambda_3.
    \end{align*}
    We note that we adapted here the notation of \cite{aggarwal2024well} as far as possible\footnote{The corresponding terms to $\biglambda_1, \biglambda_2', \biglambda_3$ in \cite{aggarwal2024well} are denoted by $\lambda_1, \lambda_2', \lambda_3$.} to increase the clarity when referring to steps in \cite{aggarwal2024well} in the appendix.
    The spatial derivatives in the terms above can be expressed as a sum of terms that involve either the first or the second component of the flux function.
    
    The main goal of the proof is to show 
    $$ -\Lambda_{\epsilon,\epsilon_0}^k(\rho_\Delta^k,\varrho^k)= \mathcal{O}\mleft(\frac{\dx}{\varepsilon} + \frac{\dt}{\varepsilon_0} + \dx + \dt  \mright).$$
    In comparison to local conservation laws, the main differences in the proof arise from the additionally appearing nonlocal terms. 
    Those need to be estimated using similar techniques as before.

    In a first step we want to consider $\bigeps_{2}$ and  $\biglambda_3$ treating each component seperately, while the other terms follow analogously.
    \begin{itemize}
      \item An estimate on $\bigeps_{2}^1$ can be provided using the differences of fluxes as usual, i.e.\ mean value theorem, assumption $(\mathbf{f})$, estimates for the differences in the nonlocal terms, and exploiting the fixed relation $\delta = \frac{\dx}{\dy}$.
      \item A direct estimate on $\biglambda_3^1$ cannot be established such that it is rewritten into $\biglambda_3^1 \eqcolon \; \biglambda_3'^1 + \bigeps_{31} + \bigeps_{32}$. These terms include differences either in (derivatives of) the flux functions or the test functions.   
    Note that due to the two spatial dimensions additional terms, compared to the proof in \cite{aggarwal2024well}, arise. 
    Using the properties of the test function, e.g. $\left| \omega_\eps \right|_{BV(\R)} = \frac{1}{\varepsilon}$, and similar arguments as before, $\bigeps_{31}$ and $\bigeps_{32}$ can be shown to be $\mathcal{O}\mleft( \frac{\dx}{\varepsilon} \mright)$.
    \end{itemize}
    Thus, we have proven 
    \begin{align*}
        -\Lambda_{\epsilon,\epsilon_0}^k(\rho_\Delta^k,\varrho^k) = \biglambda_1 + \biglambda_2' +  \biglambda_3' + \mathcal{O}\mleft( \frac{\dx}{\varepsilon} \mright) 
    .\end{align*}
    Next, we follow the steps in \cite{aggarwal2024well} and exploit the discrete entropy inequality for $\biglambda_1$ to split it up\footnote{The terms $A_2, A_3$ in \cite{aggarwal2024well} are here again expressed as two terms each, according to the two directions.} into $A_2^1+A_2^2+A_3^1+A_3^2$. We proceed to show $A_2^1 + \biglambda_2'^1= \mathcal{O}\mleft( \frac{\dt}{\varepsilon_0} + \frac{\dx}{\varepsilon} + \dt^2 + \dx^2  \mright)$.
    \begin{itemize}
      \item With the fundamental theorem of calculus and other standard arguments
      we obtain various differences that need to be bounded. Here, a difference in $p_{i,j}^{k,n}$ introduces additional terms, compared to \cite{aggarwal2024well}, due to the general form of the flux function $\fb$. Therefore, the subsequent estimates must be adapted. 
      Because some terms which are integrated still depend on time and space, 
      we obtain not only continuous norms of the test functions, as in \cite{aggarwal2024well}, but also discrete ones.
      \item Furthermore, we add some zeros that have to be chosen carefully in the two-dimensional case, e.g.\ including an integration over $x_2$ while fixing $x_1$.
    \end{itemize}
    Then, $A_3^1 + \biglambda_3'^1= \mathcal{O}\mleft(\frac{\dx}{\varepsilon} + \frac{\dt}{\varepsilon_0} + \dx + \dt  \mright)$ is shown using similar ideas as before. Consequently, with analogue estimates for the \mbox{$x_2$-directions}, we have achieve the result that needed to be established. 
    For further details on the computations, we refer to Appendix \ref{sec:appendix}.}
\end{proof}

Finally, the result on the relative entropy functional can be utilized to further estimate the claim in the \reva{Kuznetsov-type} Lemma \ref{lem:Kuznetsov}. Following the arguments in \cite{aggarwal2024well}, we obtain an estimate on the convergence rate of the numerical approximations. \reva{This constitutes the final main result of this work.}

\begin{lemma}[Error estimate]\label{lem:errorestimate}
    Let $\varrhob$ be a weak entropy solution and let $\rhob_{\Delta}$ be a numerical approximation computed by a scheme \eqref{eq:1stscheme} with a flux function fulfilling Def.\ \ref{def:flux} using a grid that satisfies the fixed relation $\dx = \delta \dy$ and the CFL condition \eqref{eq:CFL}. For any finite time $T>0$, the convergence rate can be estimated by
    \begin{align*}
        \| \varrhob(T,\cdot ) - \rhob_{\Delta}(T,\cdot )\|_{L^1(\R^2;\R^K)} = \mathcal{O}(\sqrt{\dt})
    .\end{align*}
\end{lemma}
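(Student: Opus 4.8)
The plan is to apply the Kuznetsov-type estimate of Lem.~\ref{lem:Kruznetsov} with the comparison function chosen as the numerical solution, $\ub=\rhob_\Delta$, so that its left-hand side becomes exactly $\norm{\varrhob(T,\cdot)-\rhob_\Delta(T,\cdot)}_{L^1(\R^2;\R^K)}$. First I would verify that $\rhob_\Delta$ is an admissible comparison function, i.e.\ $\rhob_\Delta\in\mathcal{U}$: the requirement $\rho_\Delta^k(t,\cdot)\in(L^\infty\cap BV)(\R^2;\R^K)$ follows from Lem.~\ref{lemma:Linf} and Thm.~\ref{thm:BV}, while the conservation of the $L^1$-norm, $\norm{\rho_\Delta^k(t,\cdot)}_{L^1}=\norm{\rho_0^k}_{L^1}$, is precisely Cor.~\ref{cor:L1}.

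With this choice the three groups of terms on the right-hand side of Lem.~\ref{lem:Kruznetsov} are handled separately. For the initial-data term I would use that $\rho_{i,j}^{k,0}$ is the cell average of $\rho_0^k$, so that the $BV$-regularity of the datum gives $\norm{\varrhob_0-\rhob_\Delta(0,\cdot)}_{L^1}=\mathcal{O}(\dx)$ (recalling $\dx=\delta\dy$). For the relative-entropy term I would insert directly the estimate proven in the preceding lemma, namely $-\Lambda_{\epsilon,\epsilon_0}^k(\rho_\Delta^k,\varrho^k)\le\mathcal{K}_7\bigl(\tfrac{\dx}{\eps}+\tfrac{\dt}{\epsilon_0}+\dt+\dx\bigr)$ for each $k$. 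The remaining term is the time-modulus $\gamma(\rho_\Delta^k,\epsilon_0)$, which I would bound by telescoping the single-step time-continuity estimate of Prop.~\ref{prop:timecont}: for $|t_1-t_2|<\epsilon_0$ at most $\lceil\epsilon_0/\dt\rceil+1$ update steps separate the two instants, and summing the per-step bound $\dt\,\mathcal{K}_3\,\text{TV}(\rho_\Delta^k)+\dt\,\mathcal{K}_4$ against the uniform-in-$n$ total-variation bound of Thm.~\ref{thm:BV} yields $\gamma(\rho_\Delta^k,\epsilon_0)=\mathcal{O}(\epsilon_0+\dt)$.

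Collecting these contributions into Lem.~\ref{lem:Kruznetsov} gives
\begin{align*}
  \norm{\varrhob(T,\cdot)-\rhob_\Delta(T,\cdot)}_{L^1(\R^2;\R^K)}
  \le C\mleft(\eps+\epsilon_0+\frac{\dx}{\eps}+\frac{\dt}{\epsilon_0}+\dx+\dt\mright),
\end{align*}
for a constant $C$ independent of the discretisation and of $\eps,\epsilon_0$. The final step is to optimise the mollification parameters: balancing $\eps$ against $\dx/\eps$ forces $\eps=\sqrt{\dx}$, and balancing $\epsilon_0$ against $\dt/\epsilon_0$ forces $\epsilon_0=\sqrt{\dt}$, so that the bracket reduces to $\mathcal{O}(\sqrt{\dx}+\sqrt{\dt}+\dx+\dt)$. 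Since the CFL condition \eqref{eq:CFL} together with the fixed ratio $\dx=\delta\dy$ keep $\dx$ and $\dt$ of the same order, the dominant contribution is $\sqrt{\dt}$, which establishes the claimed rate.

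I expect the main obstacle to be the correct treatment of the time-modulus $\gamma(\rho_\Delta^k,\epsilon_0)$: Prop.~\ref{prop:timecont} only controls a single time step, so the telescoping over $\mathcal{O}(\epsilon_0/\dt)$ steps must be combined with the \emph{uniform} total-variation bound of Thm.~\ref{thm:BV} to avoid losing a spurious factor of $\epsilon_0/\dt$ and to keep the bound of order $\epsilon_0+\dt$. All the other ingredients are direct insertions of previously established estimates, and the closing optimisation is the standard Kuznetsov balancing.
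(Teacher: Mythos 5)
Your proposal is correct and follows essentially the same route as the paper, which proves this lemma by inserting the relative-entropy estimate into the Kuznetsov-type Lem.~\ref{lem:Kruznetsov} with $\ub=\rhob_\Delta$ (admissibility via Cor.~\ref{cor:L1}, Thm.~\ref{thm:BV}, Lem.~\ref{lemma:Linf}), bounding the time modulus through Prop.~\ref{prop:timecont} combined with the uniform TV bound, and performing the standard balancing $\eps=\sqrt{\dx}$, $\epsilon_0=\sqrt{\dt}$ as in the cited reference. Your explicit treatment of the telescoped time-continuity estimate and of the initial-data term $\mathcal{O}(\dx)$ matches exactly what the paper's argument (deferred to the one-dimensional reference) requires.
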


\section{Numerical examples}\label{sec:experiments}
    In this section, we validate the theoretical results with numerical experiments. In particular, we consider a model that is reversible in time \cite{CG25}. This provides us with an exact solution given by the initial data that can be used for a convergence study. We will consider two settings, one examining the rate for discontinuous solutions with respect to the error estimate in Section \ref{sec:errorestimate} and one with smooth initial data showing the first order convergence of the applied numerical scheme. Moreover, we deal with a multi-population model, which is nonlinear in $\rho$. We compare the solutions computed with different numerical schemes as well as the associated errors to a reference solution.
    
   \revb{
   Both examples fall into the model class which has a multiplicative form as discussed in Rem.~\ref{rem:multflux}.
   In fact, most of the models considered in the literature, e.g. \cite{CR19,BGIV20,goatin2025pedestrians,GR24,CGL12,colombo2018nonlocal,aggarwal2016crowd,gottlich2014modeling,GGZ25,CG25,keimer2018multi}, belong to this class such that we have chosen the above test cases as representative numerical examples.
   Nevertheless, we note that there exist models, like the material flow models in \cite{RWGG20} and \cite{colombo2015nonlocal}, that do not fit into this special framework and require the general form of the flux.
   The numerical schemes in \cite{RWGG20} and \cite{colombo2015nonlocal} fit into our framework. For these schemes convergence has already been proved in the cited references, while the corresponding error estimates have not been established but are now guaranteed. Our framework also yields both convergence and error estimates for alternative schemes.
   }
\subsection{Encryption-decryption}
We consider the time reversible model proposed in \cite[Section 3.4]{CG25}
\begin{equation}\label{eq:encr-decr}
    \begin{cases}
        \begin{aligned}
            &\ddt \rho + \Div [\rho \; \nub\left( t, \xb, \nabla (\tilde \eta * \rho) \right)] = 0&&, (t,\xb) \in \R^+ \times \R^2\\
            &\rho(0,\xb) = \rho_0(\xb)&&, \xb \in  \R^2. 
        \end{aligned}
    \end{cases}
  \end{equation}
with 
$$\nub\left( t, \xb, \nabla \tilde \eta *  \rho \right) =   \begin{bmatrix}
0 & -1 \\
1 & 0
\end{bmatrix}  \frac{\nabla \left(  \tilde\eta * \rho \right)}{\sqrt{1+ \left\lVert \nabla \left( \tilde\eta * \rho \right) \right\rVert _2^2}}$$
for a kernel $\tilde{\eta} \in (C^3 \cap W^{3,1} \cap W^{3,\infty})(\R^2;\R)$.
This scalar conservation law fits into the setting of \eqref{eq:system} and into \eqref{eq:conv} by $\boldsymbol{\eta} = \left( \ddx \widetilde{\eta}, \ddy \widetilde{\eta} \right)^T$.
The above equation \eqref{eq:encr-decr} is reversible in time \cite[Thm. 2.2]{CG25} and thus, can be used for encrypting and decrypting data.
We consider the $L^1$-errors of the decrypted solution to the initial data and with that the convergence rates for a discontinuous and a smooth solution. 

Note that for this model the Godunov-type flux corresponds to the Upwind-type flux due to the linearity in $\rho$ of the reduced flux function. 
Moreover, the less diffusive Lax-Friedrichs-type flux that we proposed in \eqref{eq:LxFnew} results in the Upwind-type scheme too if we set the viscosity coefficient $\alpha=1$. 
Thus, we solely compare the results for the Lax-Friedrichs-type flux from \cite{ACG15} with $\alpha=1$, cf.\ \eqref{eq:LxF}, and the Upwind-type flux.

We employ periodic boundary conditions to examine the problem on a bounded domain $[-L,L]^2$ for $L \in \R^+$ while retaining all information to be reversed.
We choose $\dx = \dy = 2L/N$ and the time step size $\dt$ is set by the CFL condition \eqref{eq:CFL} with Lipschitz constants given by one, i.e.\ $\dt = 0.25\dx$.

\paragraph{Non-smooth initial data}
We consider a first numerical example to observe the convergence rates for initial data obtained via midpoint-rule approximation of the function
\begin{align}\label{eq:nonsmoothinitial}
    \rho_0\mleft(x_1,x_2  \mright) = 1 + \mleft( 4 \sin^2\left(x_1 \right) + 3\sin^2\left( x_2 \right) \mright) \raisebox{0.6ex}{\scalebox{1.2}{$\chi$}}_{[0,3]}\mleft( \left\lVert \xb \right\rVert _2 \mright)    
\end{align}
on the domain $\xb \in [-6,6]^2$. This example is similar to the one presented in \cite[Sec.~3.4.2]{CG25}. 
Here, we use a slightly different kernel, i.e.
$$\tilde\eta_\ell\mleft( \xb \mright) = \cos^3\mleft( \frac{\pi}{2\ell^2} \left\lVert \xb \right\rVert _2^2 \mright) \raisebox{0.6ex}{\scalebox{1.2}{$\chi$}}_{B_\ell(0)}\mleft( \left\lVert \xb \right\rVert _2 \mright)\text{ with } \ell=2,$$
where $B_\ell(\mathbf{z}) \coloneq \{\xb \in \R^2: |\xb - \mathbf{z}| < \ell \}$.

The encrypted data is decrypted at time $t=0.75$, yielding an approximate numerical reconstruction of the initial data. Figure \ref{fig:nonsmooth} illustrates the corresponding densities obtained using the Upwind-type numerical flux function on a grid with $N=6400$ cells in each direction.
The left plot presents the initial data, while the middle plot depicts the solution of the encryption at $t=0.75$. This solution is subsequently utilized for decryption, with the resulting density illustrated in the right-hand plot.
The errors are given by the discrete $L^1$-differences between the initial data and the decrypted density at $t=0$. These are summarized in Figure \ref{tab:nonsmooth} using the Lax-Friedrichs and the Upwind-type numerical flux functions for different grid sizes. Moreover, on the left side in Figure \ref{tab:nonsmooth} the errors in a logarithmic plot are depicted for the Upwind-type flux. 
For both flux functions the convergence rates are close to the \textit{worst case} rate of 0.5 proven by the error estimate in Lem.~\ref{lem:errorestimate}. Moreover, as expected, the total errors using the Upwind-type flux are smaller than those of the Lax-Friedrichs-type flux from \cite{ACG15}.

\begin{figure}[t]
    \centering
    \begin{subfigure}[b]{0.325\textwidth}
        \centering
        \includegraphics[width=\textwidth]{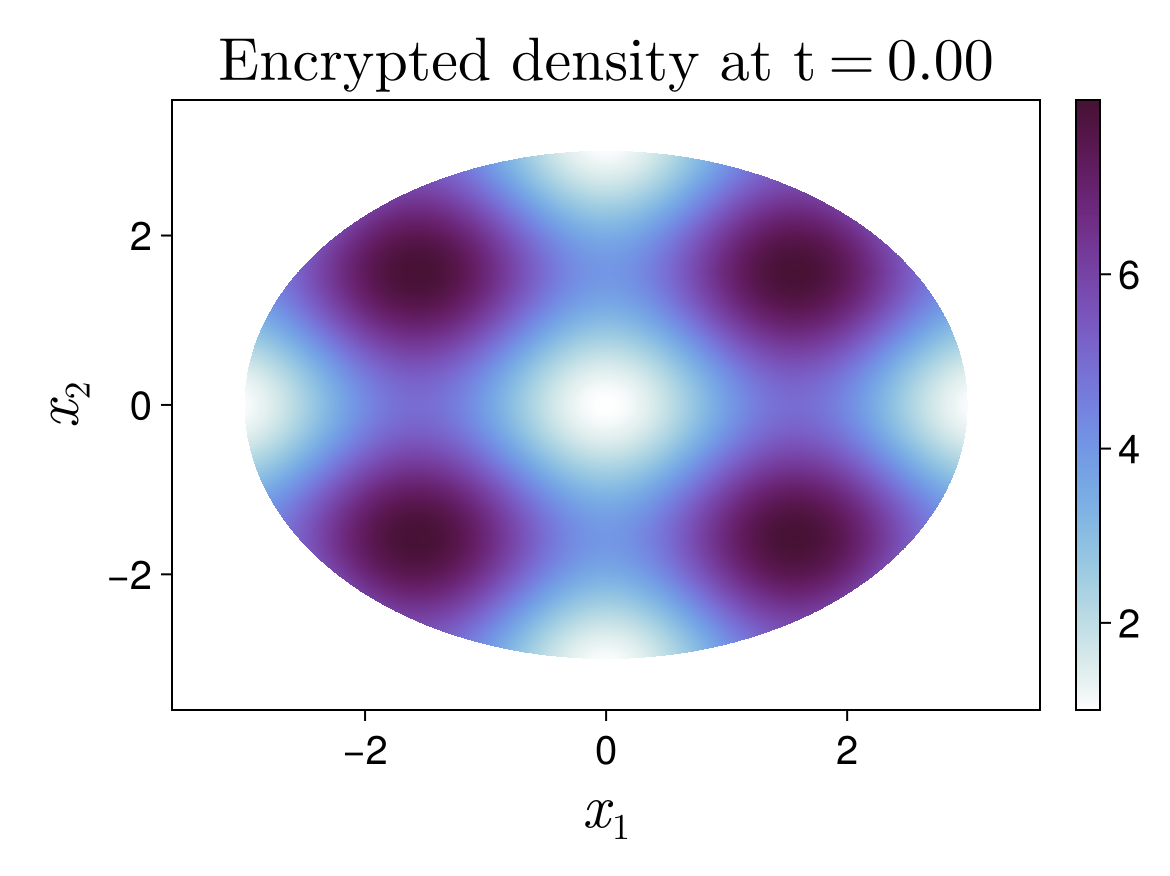}
    \end{subfigure}
    \hfill
    \begin{subfigure}[b]{0.325\textwidth}
        \centering
        \includegraphics[width=\textwidth]{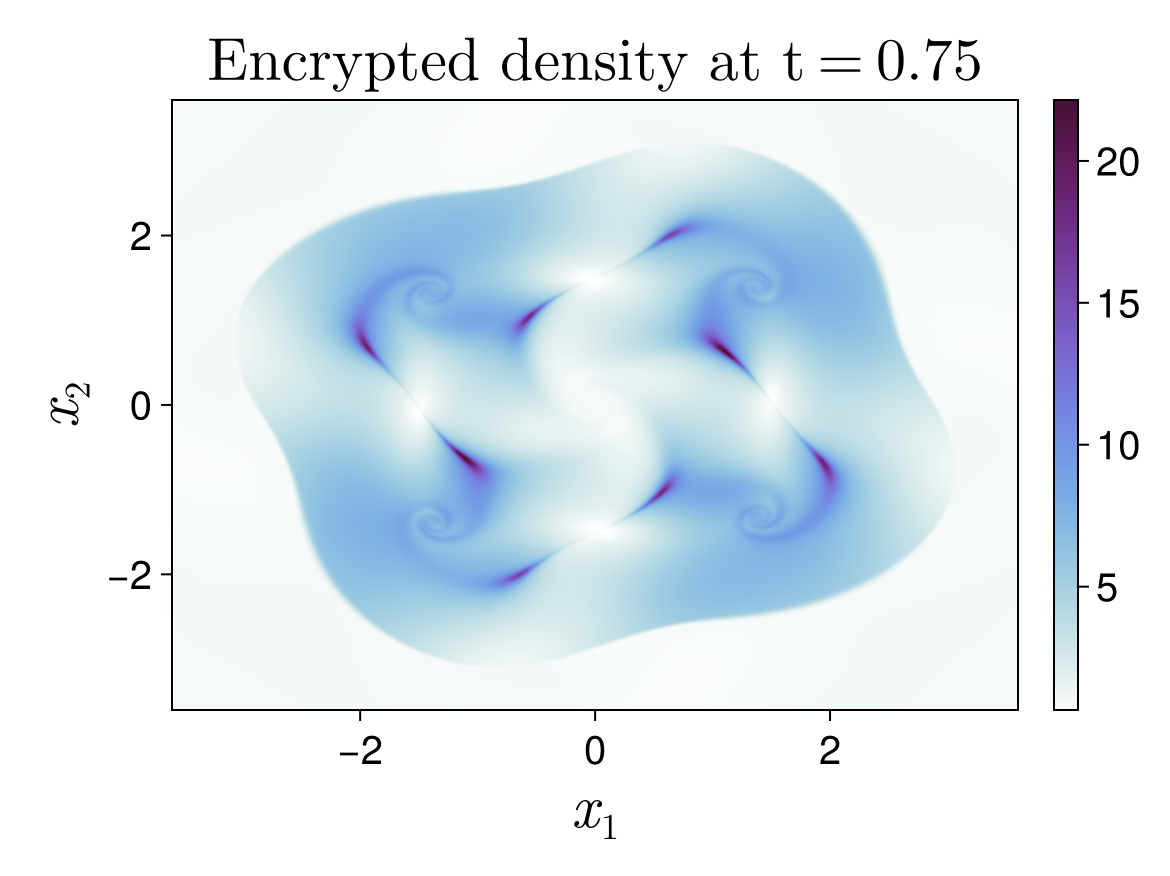}
    \end{subfigure}
    \hfill
    \begin{subfigure}[b]{0.325\textwidth}
        \centering
        \includegraphics[width=\textwidth]{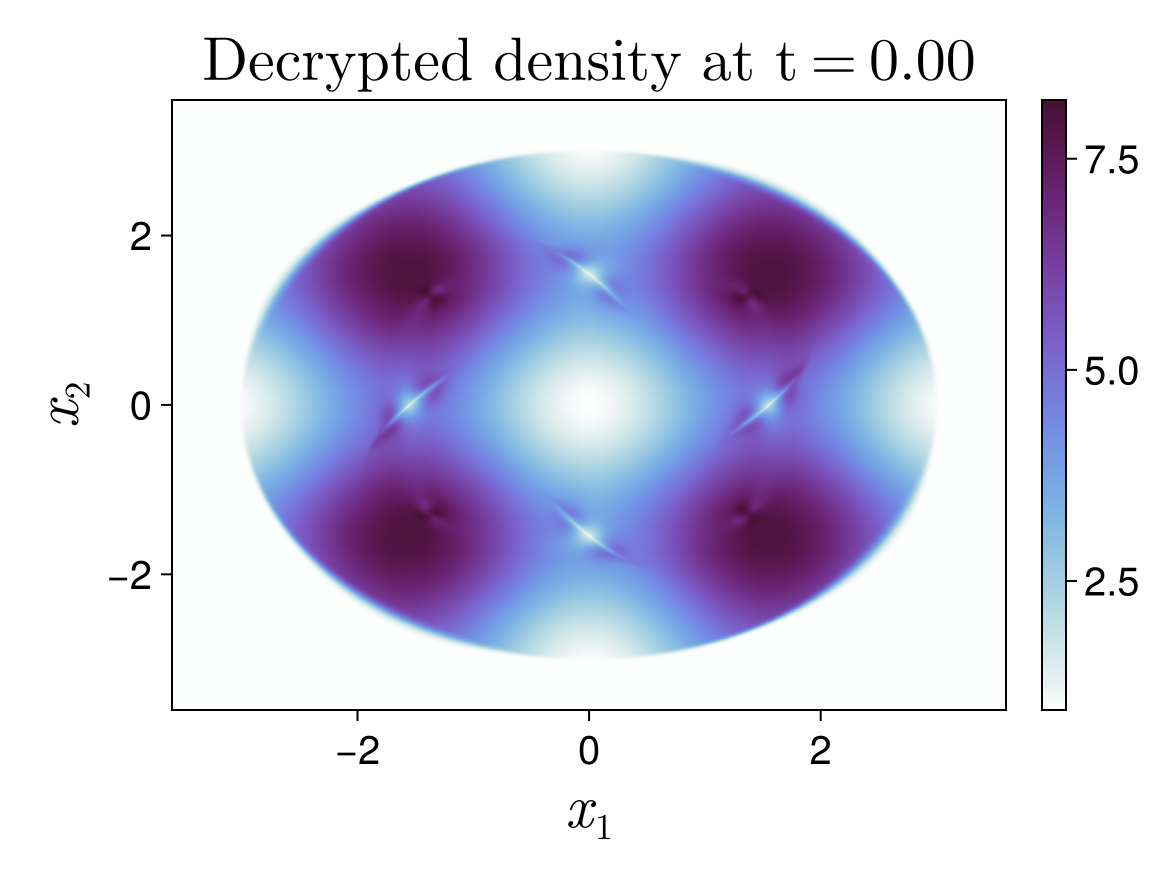}
    \end{subfigure}

    \caption{Discontinuous initial data (left) being encrypted until $t=0.75$ (middle) and decrypted (right) each plotted in the domain $[-4,4]^2$. Here an Upwind-type scheme was used with $N=6400$ cells in each direction.}
    \label{fig:nonsmooth}
\end{figure}

\begin{figure}[ht]
\centering
\begin{minipage}{0.45\textwidth}
  \centering
  \includegraphics[width=\linewidth]{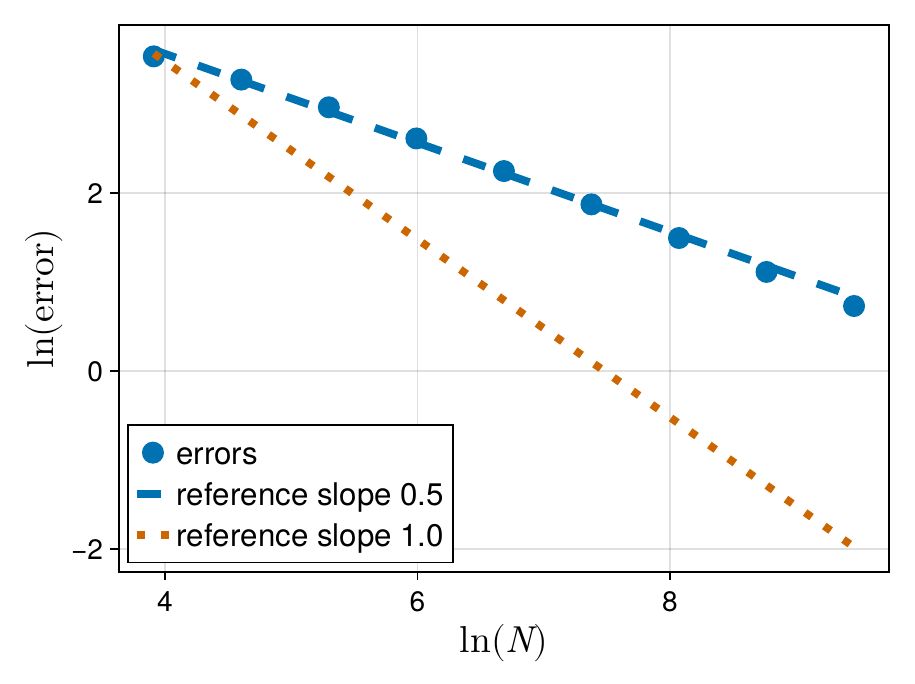}
\end{minipage}
\hfill
\begin{minipage}{0.45\textwidth}
  \centering
  \begin{tabular}{c|cc|cc}
    &\multicolumn{2}{c|}{LxF\cite{ACG15}}&\multicolumn{2}{c}{Upwind}\\
    $N$ & error & c.r. & error & c.r. \\
    \hline
    50 & 47.8 & - & 34.2 & - \\
    100 & 35.6 & 0.425 & 26.3 & 0.377 \\
    200 & 26.0 & 0.451 & 19.3 & 0.449 \\
    400 & 18.6 & 0.48 & 13.6 & 0.505 \\
    800 & 13.2 & 0.502 & 9.43 & 0.529 \\
    1600 & 9.22 & 0.514 & 6.49 & 0.540 \\
    3200 & 6.42 & 0.521 & 4.44 & 0.546 \\
    6400 & 4.45 & 0.528 & 3.04 & 0.546 \\
    12800 & 3.08 & 0.534 & 2.07 & 0.552
\end{tabular}
\end{minipage}
\caption{Convergence analysis for discontinuous initial data \eqref{eq:nonsmoothinitial}. The left side displays the decrease of the error employing Upwind-type flux on different grids with logarithmic axes. On the right, a table summarizes the errors and convergence rates for different grid sizes and using both the Lax-Friedrichs-type \cite{ACG15} and Upwind-type fluxes.}
\label{tab:nonsmooth}
\end{figure}

\paragraph{Smooth initial data}
The second numerical test deals with a convergence study demonstrating the first order accuracy of a scheme \eqref{eq:1stscheme} for smooth initial data. We consider $\xb \in [-1,1]^2$ with initial data obtained from the function
\begin{align}\label{eq:smoothinitial}
    \rho_0\mleft(x_1,x_2  \mright) = \reva{0.5 \sin\mleft(\pi x_1 + \frac{\pi}{3} \mright) \sin\mleft(\pi x_2 + \frac{\pi}{3} \mright)+0.5}
\end{align}
and the kernel is set to
$$\tilde\eta_\ell\mleft( \xb \mright) = 5 \cos^3\mleft( \frac{\pi}{2\ell^2} \left\lVert \xb \right\rVert _2^2 \mright)\raisebox{0.6ex}{\scalebox{1.2}{$\chi$}}_{B_\ell(0)}\left( \left\lVert \xb \right\rVert _2 \right)\text{ with } \ell=0.8.$$
Figure \ref{fig:smooth} illustrates on the left side the encrypted density at time \reva{$t=0.2$} obtained using the Upwind-type numerical flux function on a grid with $N=6400$ cells for each direction. The errors and convergence rates when decrypting this density are given in the table on the right of Figure \ref{fig:smooth} for both numerical flux functions, the Lax-Friedrichs \cite{ACG15} and the Upwind-type flux, for different grid sizes. We again observe smaller errors for the Upwind scheme. Nonetheless, the convergence rates for this example with smooth initial data suggest the expected order of one.

\begin{figure}[ht]
\centering
\begin{minipage}{0.5\textwidth}
  \centering
  \includegraphics[width=\linewidth]{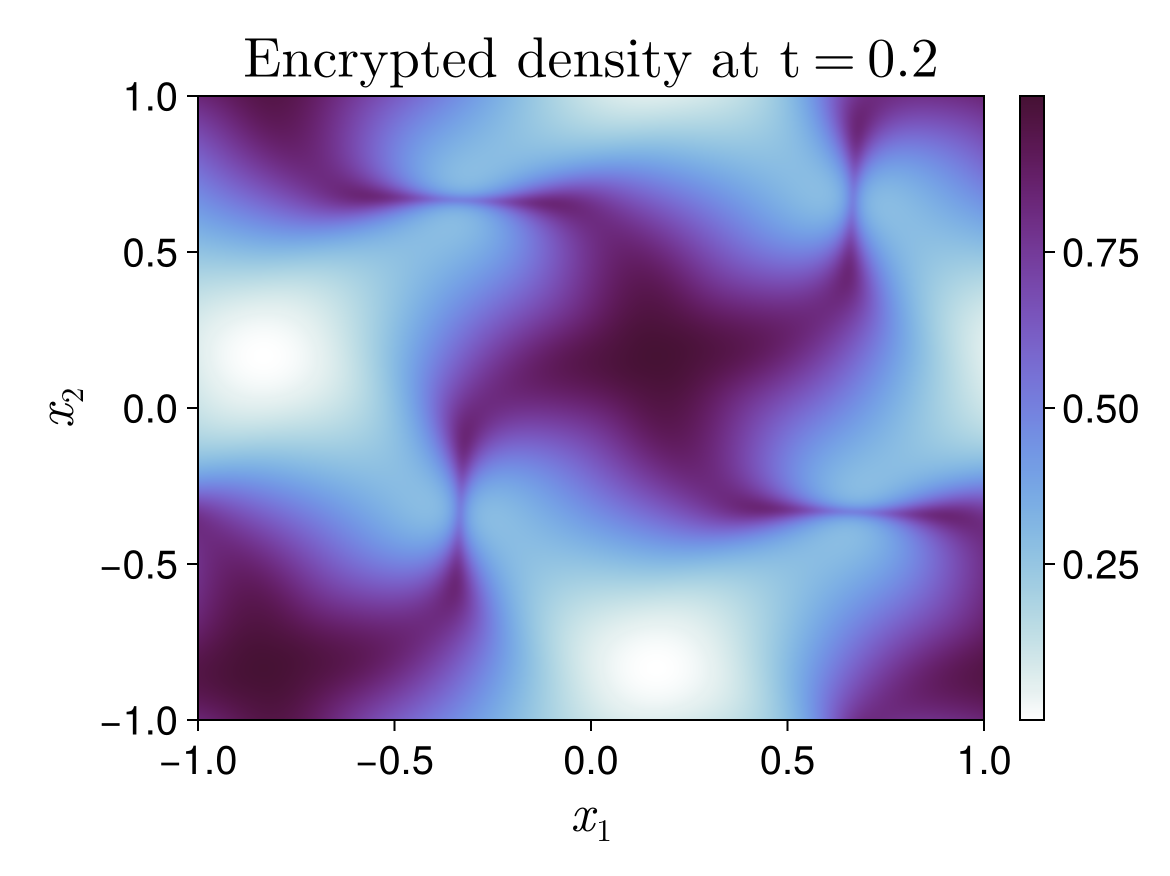}
\end{minipage}
\hfill
\begin{minipage}{0.45\textwidth}
  \centering
  \reva{
  \begin{tabular}{c|cc|cc}
    &\multicolumn{2}{c|}{LxF\cite{ACG15}}&\multicolumn{2}{c}{Upwind}\\
    $N$ & error & c.r. & error & c.r. \\
    \hline
    50 & 1.55e-1 & - & 7.66e-2 & - \\
    100 & 9.19e-2 & 0.749 & 4.36e-2 & 0.812 \\
    200 & 5.3e-2 & 0.796 & 2.36e-2 & 0.889 \\
    400 & 2.96e-2 & 0.839 & 1.23e-2 & 0.935 \\
    800 & 1.61e-2 & 0.879 & 6.33e-3 & 0.962 \\
    1600 & 8.53e-3 & 0.915 & 3.21e-3 & 0.979 \\
    3200 & 4.43e-3 & 0.945 & 1.62e-3 & 0.989 \\
    6400 & 2.27e-3 & 0.968 & 8.12e-4 & 0.994 \\
    12800 & 1.15e-3 & 0.982 & 4.07e-4 & 0.997 \\
\end{tabular}}
\end{minipage}
\caption{\reva{\revb{Encrypted} density at \reva{$t=0.2$} of the smooth \revb{initial} function \eqref{eq:smoothinitial} using an Upwind-type flux for $N=6400$ cells in each direction (left) and a table with errors and convergence rates for different grid sizes using the Lax-Friedrichs and Upwind-type numerical flux function.}}
\label{fig:smooth}
\end{figure}

\subsection{Crowd movements}
The system \eqref{eq:system} can describe crowd movements for two populations on bounded domains $\Omega \subset \R^2$, defined by the presence of walls and obstacle, for instance. Following \cite{BGIV20,GR24} we include an additional stationary density $\rho^3 = R_c \chi_{\Omega^c}$, with sufficiently large $R_c>0$, representing the domain boundaries as a high and constant value in $\R^2 \setminus \Omega$. Hence, a problem on the bounded domain $\Omega$ with boundary conditions $\rhob(t,\xb)=0$ on $\partial \Omega$ can be regarded on $\R^2$ without posing the boundary conditions directly.
We specifically consider, similar to \cite{ACG15,BGIV20,goatin2025pedestrians,GR24,CGL12},
\begin{align}\label{eq:pedmodel}
    \begin{cases*}
        \ddt \rho^1 + \Div \Bigl[ \rho^1 \, v_{\max} \; (1-\rho^1)  \mleft( \mathbf{w}^1(\xb) - \beta \frac{  \nabla \tilde \eta_{\ell} * (\rho^2 + \rho^3) }{\sqrt{1+ ||\nabla \tilde \eta_{\ell} * (\rho^2 + \rho^3) ||^2}} \mright) \Bigr] = 0,\\
        \ddt \rho^2 + \Div \Bigl[ \rho^2 \, v_{\max} \; (1-\rho^2) \mleft( \mathbf{w}^2(\xb) - \beta \frac{  \nabla \tilde \eta_{\ell} * (\rho^1 + \rho^3) }{\sqrt{1+ ||\nabla \tilde \eta_{\ell} * (\rho^1 + \rho^3) ||^2}} \mright) \Bigr] = 0.
    \end{cases*}
\end{align}
Note that this flux ensures non-negative solutions bounded above by one, cf. Thm.~\ref{eq:maxprinciple1}.

To guarantee well-posedness and to ensure that high densities in $\rhob$ are prevented from leaving the domain by sufficiently strong repulsive effects, we assume that the domain $\Omega$ satisfies assumptions $(\Omega.1)$ and $(\Omega.2)$ in \cite{GR24}, and that the support of the initial data is contained in the domain, i.e., $\operatorname{supp}(\rhob_0) \subset \Omega$. 
Moreover, the parameter $R_c > 0$ must be chosen sufficiently large such that the vector fields point inward along the boundary of the domain.
An explicit condition guaranteeing this is provided in \cite[Remark 1]{BGIV20}.

In our experiment, the domain $\Omega$ describes two crossed corridors with an obstacle, as in \cite{GR24}. More precisely, $\Omega = \R \times (-1,1) \cup (-1,1) \times \R \setminus \Omega_1^c$ for the obstacle $\Omega_1^c = [2.35,2.5]\times [2.85,3]$, where we focus the experiment on the part included in $[-3,3]^2$.
The vector field $\mathbf{w}^k$, $k=1,2$, describes the space dependent target direction of each population. The first population is initialized by \mbox{$\rho_0^1(\xb) = 0.4 \, \raisebox{0.6ex}{\scalebox{1.2}{$\chi$}}_{(-2.35,-1.45)\times (-0.75,0.75)}$} and seeks to exit the domain through $\Gamma_1 = \{3\} \times [-1,1]$. The second population, with initial density $\rho_0^2(\xb) = 0.5 \, \raisebox{0.6ex}{\scalebox{1.2}{$\chi$}}_{(-0.75,0.75)\times (-2.35,-1.25)}$, targets the exit $\Gamma_2 = [-1,1] \times \{3\}$. Thus, we impose absorbing boundary conditions at these exits.
Moreover, we choose 
\begin{equation}
    R_c = 3,\quad \beta = 0.7, \quad v_{\max}=4.5 ,\quad \tilde{\eta}_\ell(\xb) = \frac{315}{128 \pi \ell^{18}} (\ell^4 - ||\xb||_2^4)^4 \; \raisebox{0.6ex}{\scalebox{1.2}{$\chi$}}_{B_\ell(0)}\mleft( \left\lVert \xb \right\rVert _2 \mright) \text{ with } \ell=\reva{0.2}.
\end{equation}
The model fits into the setting of \eqref{eq:system} with $K=3$, $\fb^3 \equiv 0$ and 
$$ \etab = \begin{bmatrix}
0 & \ddx\tilde{\eta}_{r_2} & \ddx\tilde{\eta}_{r_2}\\
0 & \ddy\tilde{\eta}_{r_2} & \ddy\tilde{\eta}_{r_2}\\
\ddx\tilde{\eta}_{r_2} & 0 & \ddx\tilde{\eta}_{r_2}\\
\ddy\tilde{\eta}_{r_2} & 0 & \ddy\tilde{\eta}_{r_2}
\end{bmatrix}.$$

\paragraph{Direction field}
To compute the direction field $\mathbf{w}^k$, $k=1,2$, we follow \cite{PT09}. First, we solve  a Laplace's equation $\Delta \mathbf{u}^k(\xb) = 0$ with suitable Dirichlet boundary conditions. 
The outer boundary of $\Omega$ is defined by $\Omega_\text{out}$, which includes the subsets $\Gamma_k$, $k=1,2$ that denote the targeted exits. Then, the boundary conditions are given by
\begin{align}\label{eq:dirfieldbc}
  \begin{cases}
    \mathbf{u}^k(\xb) = 0 \quad, \xb \in \Omega_\text{out}\setminus\Gamma_k,\\
    \mathbf{u}^k(\xb) = 1 \quad, \xb \in \Gamma_k.
  \end{cases}
\end{align}
Since strong and broad repulsion at the outer boundary, i.e., the walls, may appear, we weaken this effect and solve a modified Helmholtz equation
\begin{align*}
  \mleft[ \Delta - \frac{1}{\delta} \mright]  \mathbf{u}^k(\xb) = 0
\end{align*}
with $\delta = 10^{-5}$ and the boundary conditions \eqref{eq:dirfieldbc}. 
Now, the direction field $\mathbf{w}^k$, $k=1,2$, is obtained by computing the normalized gradient of $\mathbf{u}^k$. 
This direction field points along the preferred path to the exit and assigns a small weight to the domain boundaries. 
We note that the boundaries are already included in the convolutions.

\begin{figure}[ht]
    \centering
    \hspace{-0.5cm}
    \begin{subfigure}[b]{0.34\textwidth}
        \centering
        \includegraphics[width=\textwidth]{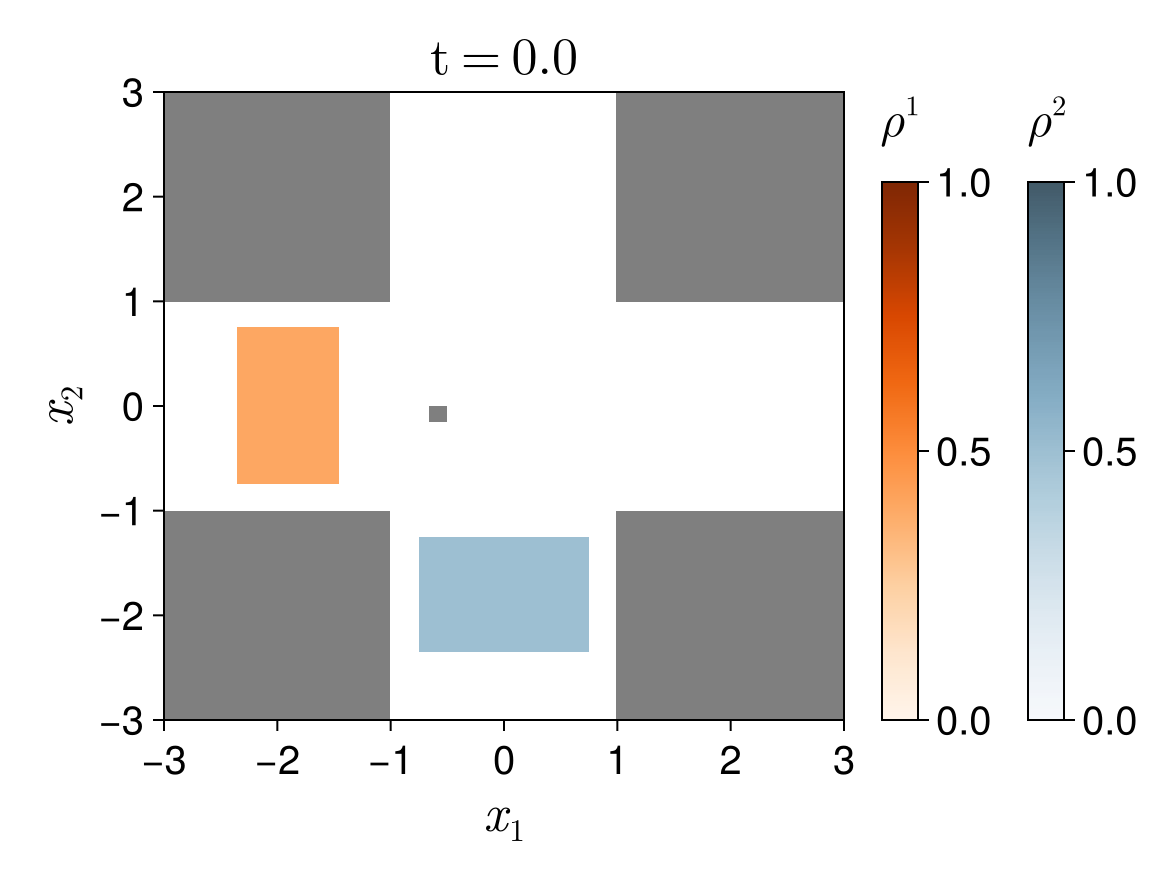}
    \end{subfigure}
    \hspace{-0.35cm}
    \begin{subfigure}[b]{0.34\textwidth}
        \centering
        \includegraphics[width=\textwidth]{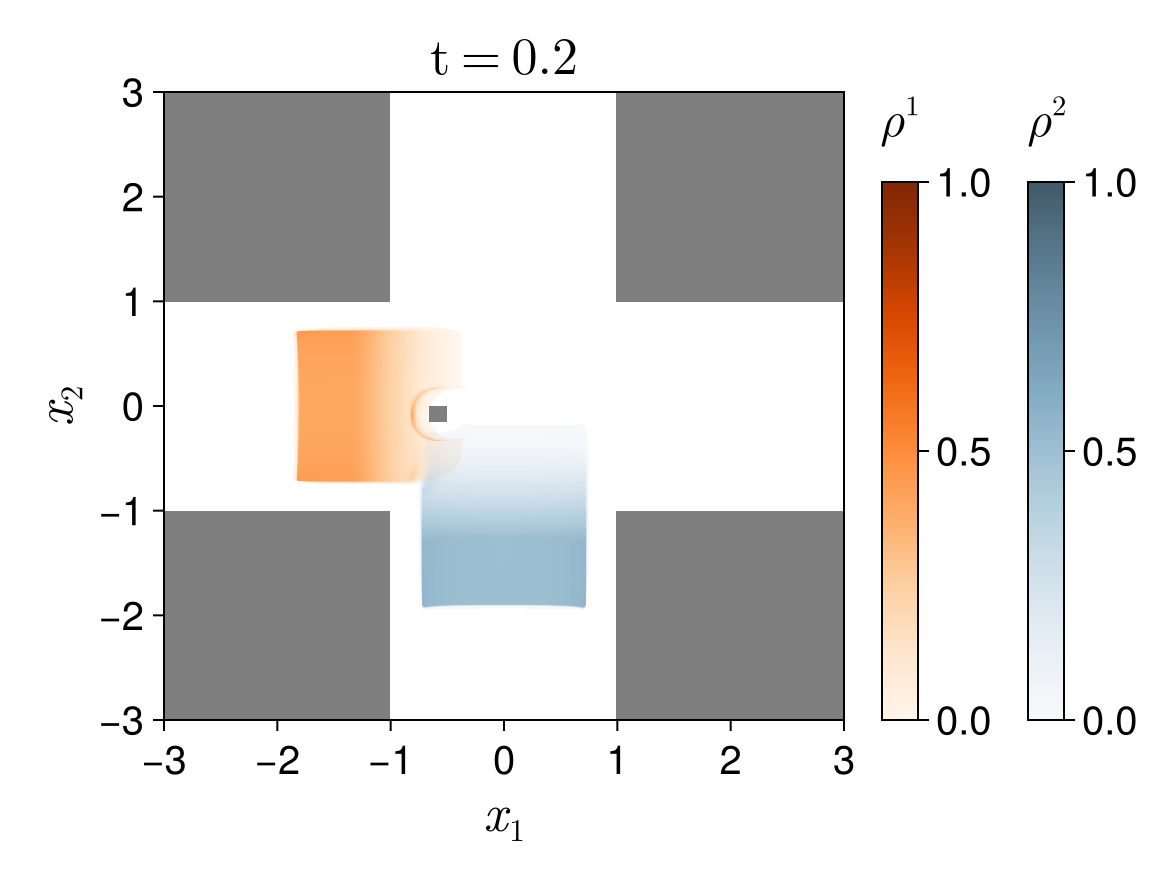}
    \end{subfigure}
    \hspace{-0.35cm}
    \begin{subfigure}[b]{0.34\textwidth}
        \centering
        \includegraphics[width=\textwidth]{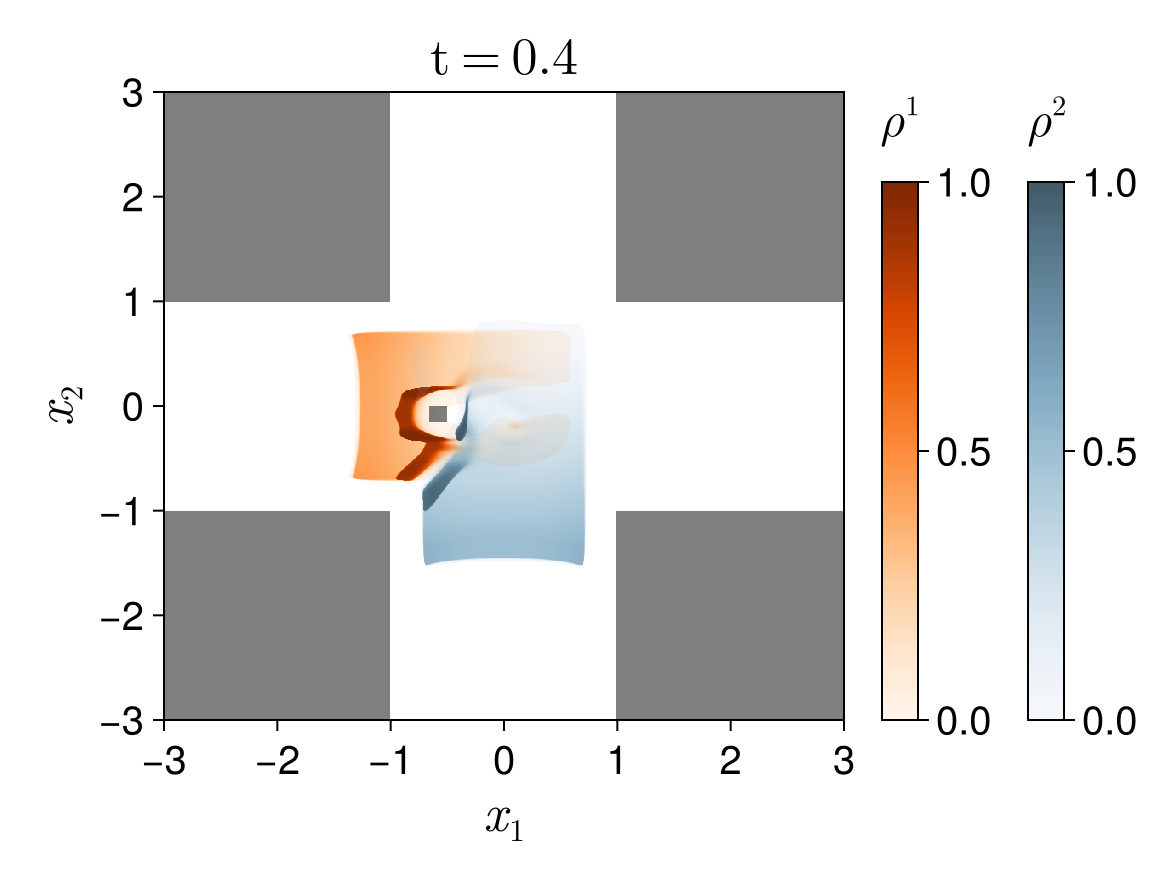}
    \end{subfigure}\\
    \hspace{-0.5cm}
    \begin{subfigure}[b]{0.34\textwidth}
        \centering
        \includegraphics[width=\textwidth]{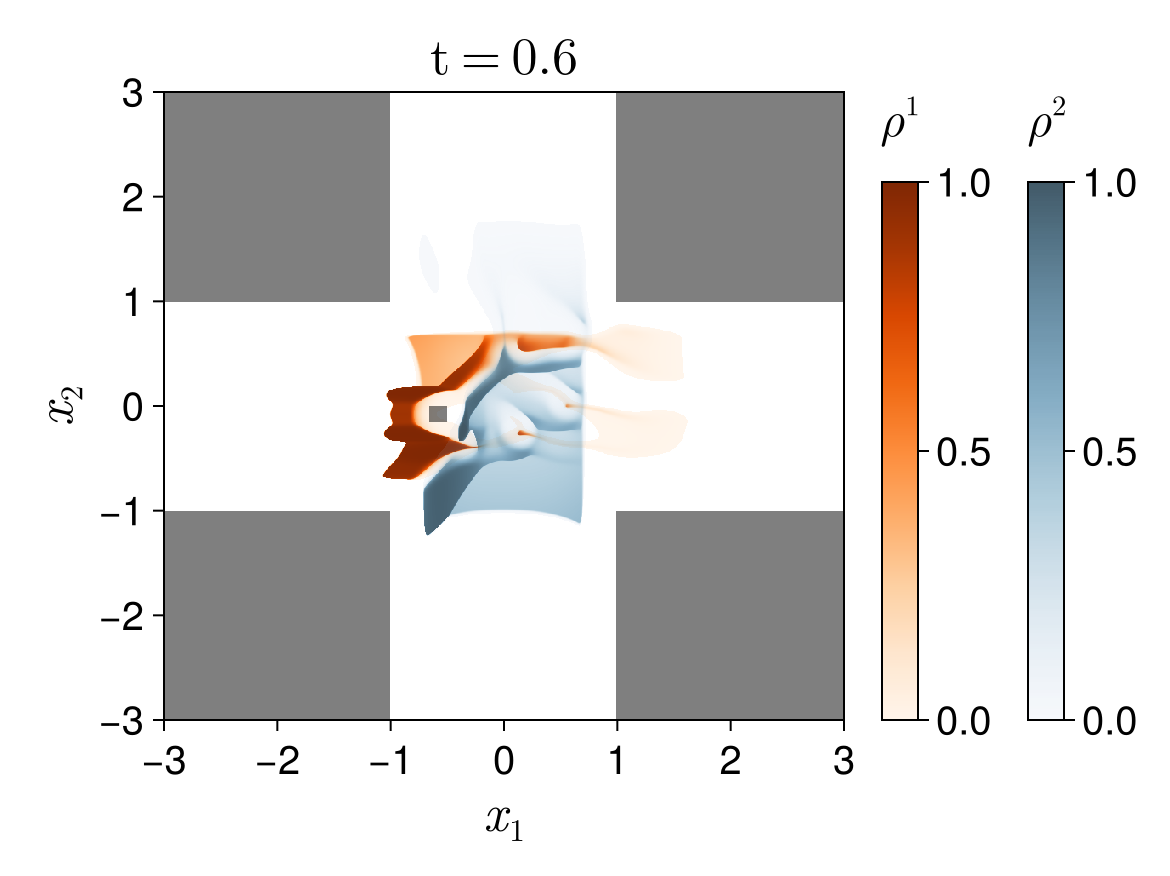}
    \end{subfigure}
    \hspace{-0.35cm}
    \begin{subfigure}[b]{0.34\textwidth}
        \centering
        \includegraphics[width=\textwidth]{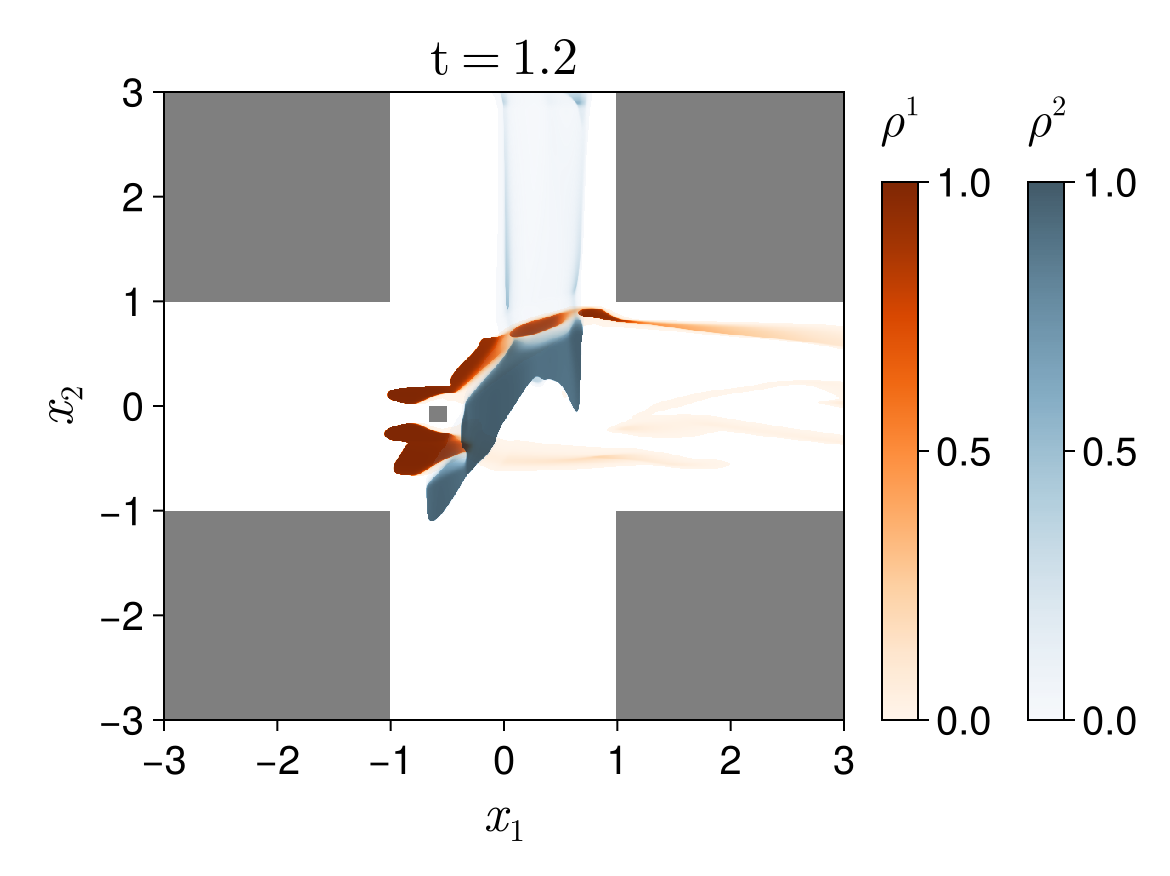}
    \end{subfigure}
    \hspace{-0.35cm}
    \begin{subfigure}[b]{0.34\textwidth}
        \centering
        \includegraphics[width=\textwidth]{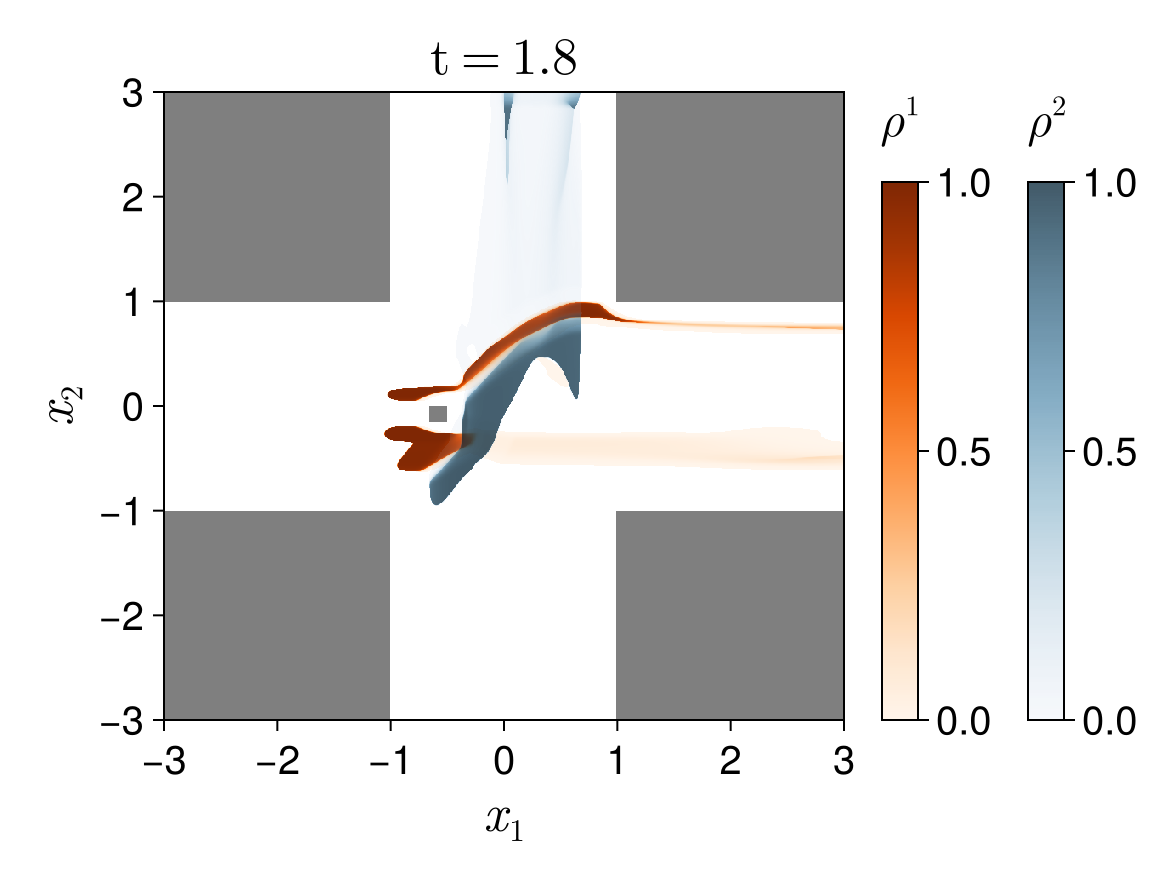}
    \end{subfigure}
    \caption{Solution at different time steps $t \in  \{0.0,0.2,0.4,0.6,1.2,1.8\}$ computed with the Godunov-type scheme on a mesh with $N=800$. The gray regions indicate the points outside the domain where the density is prescribed as $R_c = 3$.} 
    \label{fig:corridor}
\end{figure}

\paragraph{Comparison of different schemes}
Due to the multiplicative form of the model \eqref{eq:pedmodel} and the nonlinear flux in $\rho$, the Godunov-type scheme and the Lax-Friedrichs-type scheme \eqref{eq:LxFnew} do not coincide as in the previous subsection. 
Thus, we compare the following schemes: The Lax-Friedrichs-type scheme \cite{ACG15}, the less diffusive Lax-Friedrichs-type scheme \eqref{eq:LxFnew} and the Godunov-type scheme \eqref{eq:God}. 
Since the absolute value of the vector field does not exceed $1 + \beta$ the viscosity coefficient for the Lax-Friedrichs-type scheme \cite{ACG15} is set to $\alpha=v_{\max}(1+\beta)=7.65$, whereas the coefficient for the version proposed in \eqref{eq:LxFnew} can be chosen as $\alpha = v_{\max}=4.5$. 
The Lipschitz constants for all schemes are given by $L=v_{\max}(1+\beta)$, such that $\dt = \frac{\dx}{4L}$ with $\dx=\dy$.

Figure \ref{fig:corridor} displays the solution of the Godunov-type scheme on a grid with $N=800$ cells in each direction for different time steps to illustrate the general development of the two population densities $\rho^1$ and $\rho^2$. 
The first population needs to pass the obstacle, which is located close to the center. 
Here, the two populations collide which creates lanes such that the populations can pass each other.

Next, we fix the time $t=0.6$ and compare the three different schemes. The associated plots are displayed in Figure~\ref{fig:t06}. 
The Lax-Friedrichs-type scheme \cite{ACG15} (on the left) is more diffusive than the other two schemes. 
In contrast, the Lax-Friedrichs-type scheme \eqref{eq:LxFnew} (in the middle) shows significantly improved performance.
The differences to the Godunov-type scheme (on the right) seem to be negligible, although the latter still produces slightly sharper lines. 
Note that in all cases and at all time steps in Figure \ref{fig:corridor}, the two densities remain between 0 and 1, as required by the maximum principle.

\begin{figure}[b]
    \centering
    \hspace{-0.5cm}
    \begin{subfigure}[b]{0.34\textwidth}
        \centering
        \includegraphics[width=\textwidth]{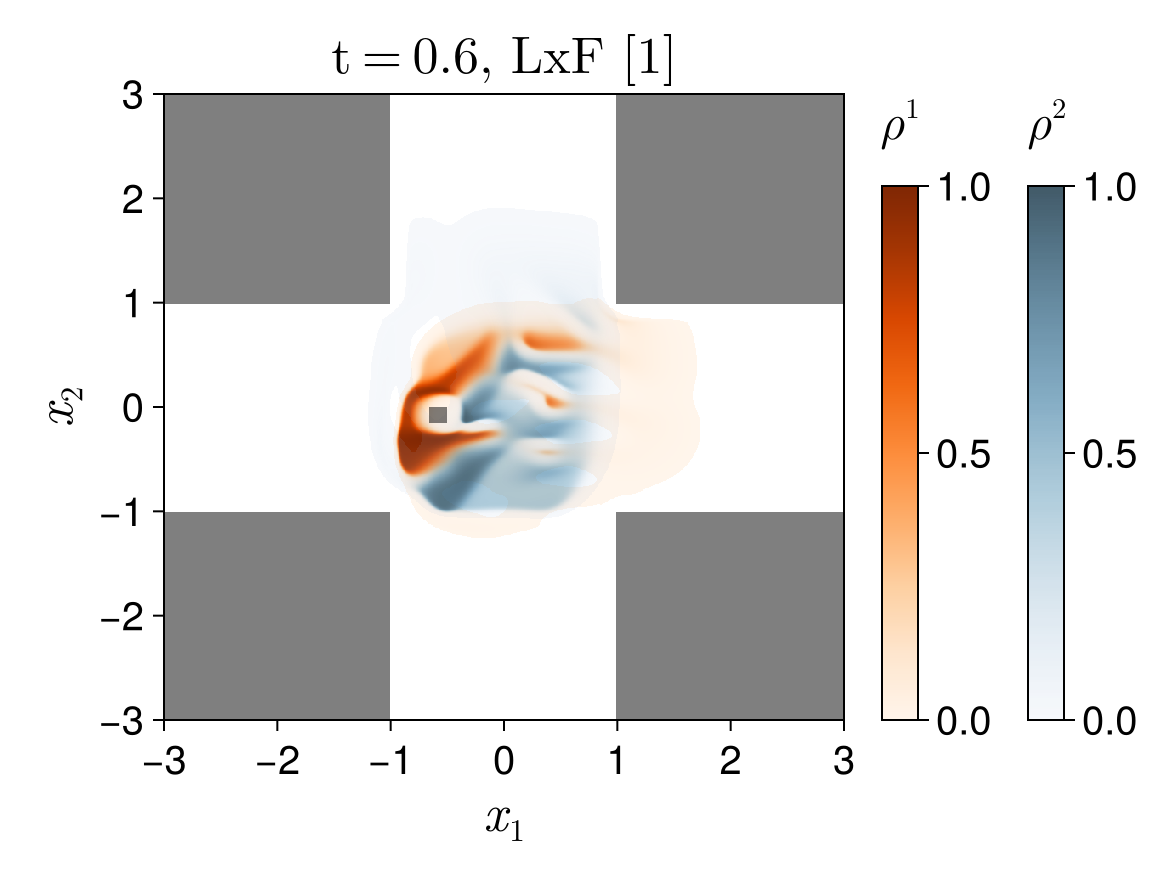}
    \end{subfigure}
    \hspace{-0.35cm}
    \begin{subfigure}[b]{0.34\textwidth}
        \centering
        \includegraphics[width=\textwidth]{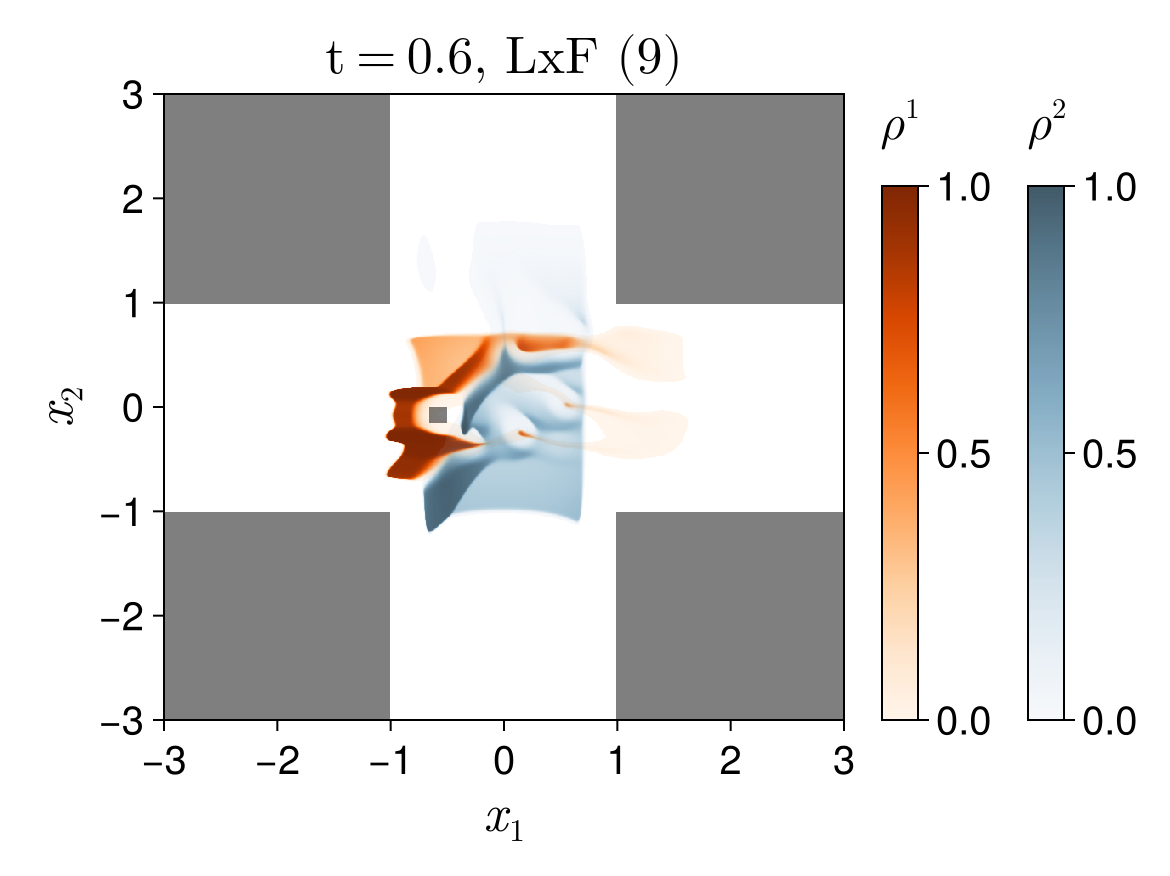}
    \end{subfigure}
    \hspace{-0.35cm}
    \begin{subfigure}[b]{0.34\textwidth}
        \centering
        \includegraphics[width=\textwidth]{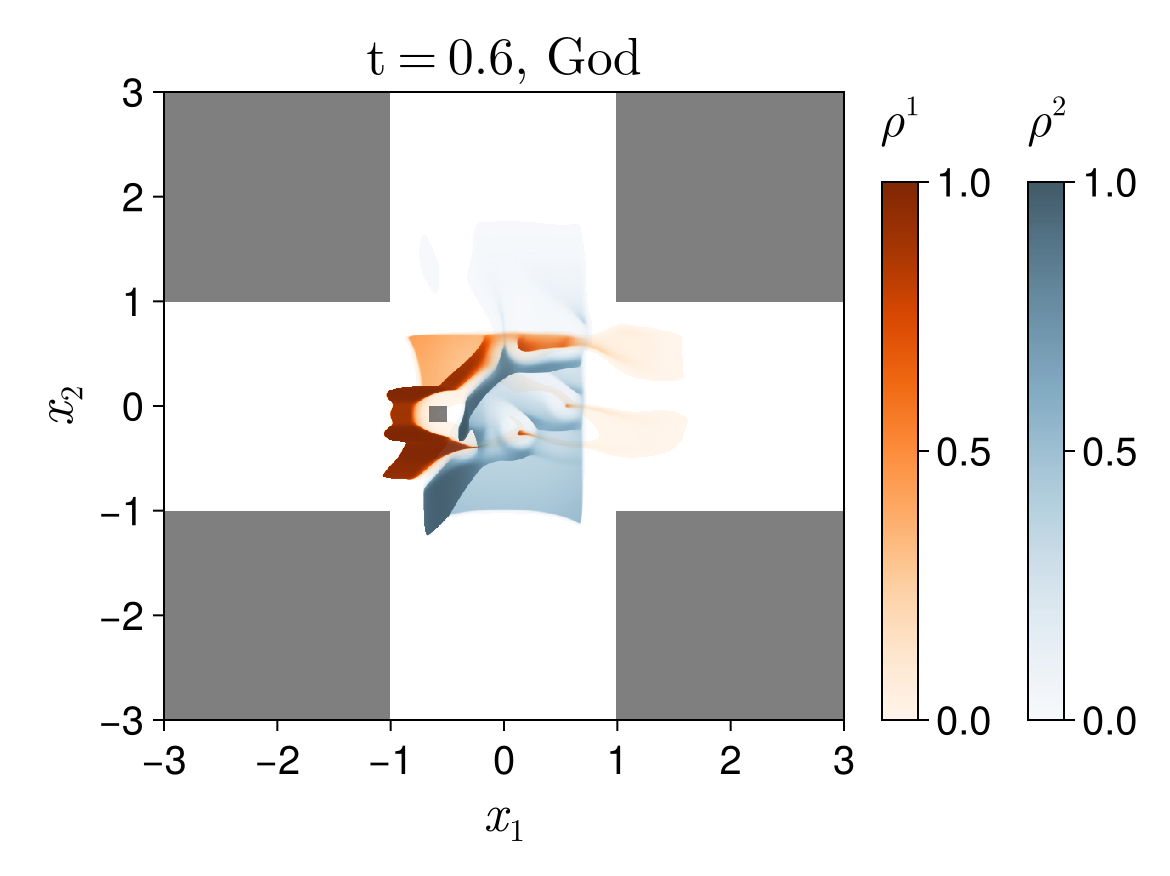}
    \end{subfigure}
    \caption{Solution at $t=0.6$ computed with different schemes on a mesh with $N=800$: the Lax-Friedrichs-type scheme \cite{ACG15} on the left, the Lax-Friedrich-type scheme \eqref{eq:LxFnew} in the middle and the Godunov-type scheme \eqref{eq:God} on the right.}
    \label{fig:t06}
\end{figure}

For a better comparison, in particular concerning the Godunov-type scheme and Lax-Friedrichs-type scheme \eqref{eq:LxFnew}, we compute the errors for different meshes to a reference solution that is computed with the Godunov-type scheme for $N=3200$.
As shown in Table~\ref{tab:corridor_errors} the errors produced by the Lax-Friedrichs-type scheme \eqref{eq:LxFnew} are only slightly larger than with the Godunov-type scheme \eqref{eq:God}. 
The largest errors are obtained by using the Lax–Friedrichs-type scheme from \cite{ACG15}.
Hence, the Lax-Friedrichs-type scheme \eqref{eq:LxFnew} provides a good compromise between accuracy and simplicity of implementation, since no Riemann problems of the reduced flux need to be solved as required by the Godunov-type scheme \eqref{eq:God}.
We note that only the Godunov-type scheme approaches the convergence rate of one and the worst case error estimate does not seem to apply.

\begin{table}
  \centering
  \begin{tabular}{c|cc|cc|cc}
  &\multicolumn{2}{c|}{LxF\cite{ACG15}}&\multicolumn{2}{c|}{LxF \eqref{eq:LxFnew}}& \multicolumn{2}{c}{God \eqref{eq:God}}\\
  $N$ & error & c.r. & error & c.r. & error & c.r. \\
  \hline
  50 & \reva{1.82} & - & \reva{1.14} & - & \reva{1.10} & - \\
  100 & \reva{1.51} & 0.269 & \reva{0.829} & 0.456 & \reva{0.686} & 0.678 \\
  200 & \reva{1.23} & 0.292 & \reva{0.669} & 0.310 & \reva{0.474} & 0.533 \\
  400 & \reva{1.03} & 0.261 & \reva{0.463} & 0.532 & \reva{0.309} & 0.615 \\
  800 & \reva{0.801} & 0.360 & \reva{0.267} & 0.791 & \reva{0.156} & 0.985 \\
  \end{tabular}  
  \caption{Errors and convergence rates for various grid sizes using the Lax-Friedrichs-type scheme from \cite{ACG15}, the Lax-Friedrichs-type scheme proposed in \eqref{eq:LxFnew} and the Godunov-type scheme \eqref{eq:God}. The errors are calculated with respect to a reference solution obtained using the Godunov-type scheme on a grid with $N=3200$.}
  \label{tab:corridor_errors}
\end{table}

\section{Conclusion}\label{sec:conclusion}
In this work, we have \revc{established the convergence for a general class of numerical} schemes for two-dimensional nonlocal conservation laws. 
In particular, we have proven that well-known monotone schemes for local conservation laws can be applied such as Godunov- or Lax-Friedrichs-type schemes. 
It should be noted that the resulting schemes are not monotone in every argument due to the nonlocal term. 
In addition, we have provided an error estimate for \revc{these monotone-based} schemes which is of magnitude $\mathcal{O}(\sqrt{\Delta t})$. 

Even though our results can be adapted to a scheme with dimensional splitting, we concentrated on those without dimensional splitting because this allows for a straightforward extension to higher order schemes. 
In particular, to guarantee the positivity preservation and, possibly, a maximum principle for suitable nonlocal conservation laws
it is important that the numerical fluxes \reva{satisfy} those for a first order scheme, as we have shown here. 
We will investigate higher order approximations based on the here presented class of numerical schemes in a follow-up work.

\section*{Declarations}
\subsection*{Availability of data and materials}
Not applicable.

\subsection*{Competing interest}
The authors declare that they have no competing interests.

\subsection*{Funding}
A.~B. and J.~F. are supported by the German Research Foundation (DFG) through SPP 2410 `Hyperbolic Balance Laws in Fluid Mechanics: Complexity, Scales, Randomness' under grant FR 4850/1-1. Moreover, A.~B. is partially funded by the DFG project 320021702/GRK2326 ’Energy, Entropy, and Dissipative Dynamics (EDDy)’.

\subsection*{Authors' contribution}
A.~B. implemented the code and prepared the numerical examples examined in Section \ref{sec:experiments}, while both authors contributed equally to the other sections. All authors read and approved the final manuscript.

\subsection*{Acknowledgment}
Both authors thank Michael Herty for fruitful discussions during the completion of this work.
\reva{Furthermore, we would like to thank the two anonymous referees for investing time in reviewing this work, which helped improve it further.}

  \bibliographystyle{siam} 
  \bibliography{references.bib}

@article{CGL12,
  title = {A CLASS OF NONLOCAL MODELS FOR PEDESTRIAN TRAFFIC},
  author = {Colombo, Rinaldo M. and Garavello, Mauro and {L{\'e}cureux-Mercier}, Magali},
  year = 2012,
  month = apr,
  journal = {Math. Models Methods Appl. Sci.},
  volume = {22},
  number = {04},
  pages = {1150023},
  issn = {0218-2025, 1793-6314},
  doi = {10.1142/S0218202511500230},
  urldate = {2026-01-26},
  abstract = {We present a new class of macroscopic models for pedestrian flows. Each individual is assumed to move towards a fixed target, deviating from the best path according to the instantaneous crowd distribution. The resulting equation is a conservation law with a nonlocal flux. Each equation in this class generates a Lipschitz semigroup of solutions and is stable with respect to the functions and parameters defining it. Moreover, key qualitative properties such as the boundedness of the crowd density are proved. Specific models are presented and their qualitative properties are shown through numerical integrations. In particular, the present model accounts for the possibility of reducing the exit time from a room by carefully positioning obstacles that direct the crowd flow.},
  langid = {english},
  }

@article{CG25,
  title = {Non-Local Hyperbolic Dynamics of Clusters},
  author = {Colombo, R. M. and Garavello, M.},
  year = {2025},
  journal = {Math. Modell. Nat. Phenom.},
  volume = {20},
  pages = {10},
  issn = {0973-5348, 1760-6101},
  doi = {10.1051/mmnp/2025011},
  urldate = {2025-05-21},
}

@article{RWGG20,
  title = {Well-Posedness of a Non-Local Model for Material Flow on Conveyor Belts},
  author = {Rossi, Elena and Wei{\ss}en, Jennifer and Goatin, Paola and G{\"o}ttlich, Simone},
  year = {2020},
  month = mar,
  journal = {ESAIM. Math. Model. Numer. Anal.},
  volume = {54},
  number = {2},
  pages = {679--704},
  publisher = {EDP Sciences},
  issn = {0764-583X, 1290-3841},
  doi = {10.1051/m2an/2019062},
  urldate = {2025-05-05},
  abstract = {In this paper, we focus on finite volume approximation schemes to solve a non-local material flow model in two space dimensions. Based on the numerical discretisation with dimensional splitting, we prove the convergence of the approximate solutions, where the main difficulty arises in the treatment of the discontinuity occurring in the flux function. In particular, we compare a Roe-type scheme to the well-established Lax--Friedrichs method and provide a numerical study highlighting the benefits of the Roe discretisation. Besides, we also prove the L\textsuperscript{1{$<$}sup/{$>$}-Lipschitz continuous dependence on the initial datum, ensuring the uniqueness of the solution.}},
  copyright = {{\copyright} EDP Sciences, SMAI 2020},
  }

@article{ACG15,
  title = {Nonlocal Systems of Conservation Laws in Several Space Dimensions},
  author = {Aggarwal, Aekta and Colombo, Rinaldo M. and Goatin, Paola},
  year = {2015},
  journal = {SIAM J. Numer. Anal.},
  volume = {52},
  number = {2},
  pages = {963},
  urldate = {2024-11-12},
  }

@article{BGIV20,
  title = {A Non-Local Pedestrian Flow Model Accounting for Anisotropic Interactions and Domain Boundaries},
  author = {B{\"u}rger, Raimund and Goatin, Paola and Inzunza, Daniel and Villada, Luis Miguel},
  year = {2020},
  journal = {Math. Biosci. Eng},
  volume = {17},
  number = {mbe-17-05-314},
  pages = {5883--5906},
  issn = {1551-0018},
  doi = {10.3934/mbe.2020314},
  urldate = {2024-11-12},
  abstract = {This study revises the non-local macroscopic pedestrian flow model proposed in [R. M. Colombo, M. Garavello, and M. L{\'e}cureux-Mercier. A class of nonlocal models for pedestrian traffic. Math. Models Methods Appl. Sci., 22(4):1150023, 2012] to account for anisotropic interactions and the presence of walls or other obstacles in the walking domain. We prove the well-posedness of this extended model and we apply high-resolution numerical schemes to illustrate the model characteristics. In particular, numerical simulations highlight the role of different model parameters in the observed pattern formation.},
  copyright = {2020 The Author(s)},
  langid = {english},
  }

@article{FSS23,
  title = {Numerical Schemes for a Class of Nonlocal Conservation Laws: A General Approach},
  shorttitle = {Numerical Schemes for a Class of Nonlocal Conservation Laws},
  author = {Friedrich, Jan and Sudha, Sanjibanee and Samala, Rathan},
  year = {2023},
  month = may,
  journal = {Networks Heterogen. Media},
  volume = {18},
  pages = {1335--1354},
  doi = {10.3934/nhm.2023058},
  abstract = {In this work we present a rather general approach to approximate the solutions of nonlocal conservation laws. In a first step, we approximate the nonlocal term with an appropriate quadrature rule applied to the spatial discretization. Then, we apply a numerical flux function on the reduced problem. We present explicit conditions which such a numerical flux function needs to fulfill. These conditions guarantee the convergence to the weak entropy solution of the considered model class. Numerical examples validate our theoretical results and demonstrate that the approach can be applied to other nonlocal problems.},
  }

@article{GR24,
  title = {Well-Posedness of Nonlocal Macroscopic Models of Multi-Population Pedestrian Flows for Domain Shape Optimization},
  author = {Goatin, Paola and Rossi, Elena},
  year = {2024},
  journal = {J. Hyperbolic Differ. Equations},
  abstract = {We consider a class of multi-population pedestrian models consisting in a system of nonlocal conservation laws coupled in the nonlocal components and describing several groups of pedestrians moving towards their respective targets while trying to avoid each other and the obstacles limiting the walking domain. Specifically, the nonlocal operators account for interactions occurring at the microscopic level as a reaction to the presence of other individuals or obstacles along the preferred path. In particular, the presence of obstacles is implemented in the nonlocal terms of the equations and not as classical boundary conditions. This allows to rewrite domain shape optimization problems as PDE-constrained problems. In this paper, we investigate the well-posedness of such optimization problems by proving the stability of solutions with respect to the positions and shapes of the obstacles. A differentiability result in the linear case is also provided. These properties are illustrated with a numerical example.},
  keywords = {domain shape optimization,multi-population,multi-population macroscopic pedestrian flow models,Nonlocal systems of conservation laws,obstacles as population},
  }

@article{ACT15,
  title = {On the Numerical Integration of Scalar Nonlocal Conservation Laws},
  author = {Amorim, Paulo and Colombo, Rinaldo M. and Teixeira, Andreia},
  year = {2015},
  month = jan,
  journal = {ESAIM. Math. Model. Numer. Anal.	},
  volume = {49},
  number = {1},
  pages = {19--37},
  publisher = {EDP Sciences},
  issn = {0764-583X, 1290-3841},
  doi = {10.1051/m2an/2014023},
}

@article{CR19,
  title = {Modelling Crowd Movements in Domains with Boundaries},
  author = {Colombo, Rinaldo M and Rossi, Elena},
  year = 2019,
  month = oct,
  journal = {IMA J. Appl. Math.},
  volume = {84},
  number = {5},
  pages = {833--853},
  issn = {0272-4960},
  doi = {10.1093/imamat/hxz017},
  }

@article{San83,
  title = {On Convergence of Monotone Finite Difference Schemes with Variable Spatial Differencing},
  author = {Sanders, Richard},
  year = 1983,
  journal = {Math. Comput.},
  volume = {40},
  number = {161},
  pages = {91--106},
  issn = {0025-5718, 1088-6842},
  doi = {10.1090/S0025-5718-1983-0679435-6}
}

@incollection{EGH00,
  title = {Finite Volume Methods},
  booktitle = {Handbook of {{Numerical Analysis}}},
  author = {Eymard, Robert and Gallou{\"e}t, Thierry and Herbin, Rapha{\`e}le},
  year = 2000,
  volume = {7},
  pages = {713--1018},
  publisher = {Elsevier},
  doi = {10.1016/S1570-8659(00)07005-8},
  urldate = {2025-10-31},
  isbn = {978-0-444-50350-3},
  }

@article{GGZ25,
  title = {A Non-Local Model for Heterogeneous Material Flow on Conveyor Belts},
  author = {Goatin, Paola and G{\"o}ttlich, Simone and Ziegler, Fabian},
  year = 2025,
  publisher = {arXiv},
  journal= {arXiv preprint arXiv:2510.17500 },
  doi = {10.48550/ARXIV.2510.17500},
  urldate = {2025-10-23},
  abstract = {In this paper, a finite volume approximation scheme is used to solve a non-local macroscopic material flow model in two space dimensions, accounting for the presence of boundaries in the non-local terms. Based on a previous result for the scalar case, we extend the setting to a system of heterogeneous material on bounded domains. We prove the convergence of the approximate solutions constructed using the Roe scheme with dimensiona splitting, where the major challenge lies in the treatment of the discontinuity occurring in the flux function. Numerical tests show a good agreement with microscopic simulations.},
  copyright = {Creative Commons Attribution 4.0 International},
  keywords = {FOS: Mathematics,Numerical Analysis (math.NA)},
}

@article{aggarwal2024well,
  title={Well-posedness and error estimates for coupled systems of nonlocal conservation laws},
  author={Aggarwal, Aekta and Holden, Helge and Vaidya, Ganesh},
  journal={IMA J. Numer. Anal.},
  volume={44},
  number={6},
  pages={3354--3392},
  year={2024},
  publisher={Oxford University Press}
}

@article{ghoshal2021godunov,
  title={A {{Godunov}} type scheme and error estimates for multidimensional scalar conservation laws with {{Panov-type}} discontinuous flux},
  author={Ghoshal, Shyam Sundar and Towers, John D and Vaidya, Ganesh},
  journal={arXiv preprint arXiv:2103.05602},
  year={2021}
}

@article{goatin2025pedestrians,
  title={Nonlocal macroscopic models of multi-population pedestrian flows for walking facilities optimization},
  author={Goatin, Paola and Inzunza, Daniel and Villada, Luis Miguel},
  journal={Appl. Math. Modell.},
  volume={141},
  pages={115927},
  year={2025},
  publisher={Elsevier}
}

@article {keimer2018multi,
    AUTHOR = {Keimer, Alexander and Pflug, Lukas and Spinola, Michele},
     TITLE = {Existence, uniqueness and regularity of multi-dimensional
              nonlocal balance laws with damping},
   JOURNAL = {J. Math. Anal. Appl.},
  FJOURNAL = {Journal of Mathematical Analysis and Applications},
    VOLUME = {466},
      YEAR = {2018},
    NUMBER = {1},
     PAGES = {18--55},
      ISSN = {0022-247X},
   MRCLASS = {35L60 (35B65)},
  MRNUMBER = {3818104},
MRREVIEWER = {Stephen D. Pankavich},
       doi = {10.1016/j.jmaa.2018.05.013},
       url = {https://doi.org/10.1016/j.jmaa.2018.05.013}, 
}

@article{huang2024asymptotic,
  title={Asymptotic compatibility of a class of numerical schemes for a nonlocal traffic flow model},
  author={Huang, Kuang and Du, Qiang},
  journal={SIAM J. Numer. Anal.	},
  volume={62},
  number={3},
  pages={1119--1144},
  year={2024},
  publisher={SIAM}
}

@article{aggarwal2024accuracy,
  title={On the accuracy of the finite volume approximations to nonlocal conservation laws},
  author={Aggarwal, Aekta and Holden, Helge and Vaidya, Ganesh},
  journal={Numer. Math.},
  volume={156},
  number={1},
  pages={237--271},
  year={2024},
  publisher={Springer}
}

@article{aggarwal2025error,
  title={Error estimates for systems of nonlocal balance laws modelling dense multilane vehicular traffic},
  author={Aggarwal, Aekta and Holden, Helge and Vaidya, Ganesh},
  journal={Nonlinearity},
  volume={38},
  number={10},
  pages={105007},
  year={2025},
  publisher={IOP Publishing}
}

@article{keimer2023nonlocal,
  title={Nonlocal balance laws--an overview over recent results},
  author={Keimer, Alexander and Pflug, Lukas},
  journal={Handb. Numer. Anal.},
  volume={24},
  pages={183--216},
  year={2023},
  publisher={Elsevier}
}

@article{blandin2016well,
  title={Well-posedness of a conservation law with non-local flux arising in traffic flow modeling},
  author={Blandin, Sebastien and Goatin, Paola},
  journal={Numer. Math.},
  volume={132},
  number={2},
  pages={217--241},
  year={2016},
  publisher={Springer}
}

@article{friedrich2018godunov,
  title={A {{Godunov}} type scheme for a class of {{LWR}} traffic flow models with non-local flux},
  author={Friedrich, Jan and Kolb, Oliver and G{\"o}ttlich, Simone},
  journal={Networks Heterogen. Media},
  volume={13},
  number={4},
  pages={531--547},
  year={2018},
  publisher={Networks Heterogen. Media}
}

@article{burger2023hilliges,
  title={A {{Hilliges-Weidlich-type}} scheme for a one-dimensional scalar conservation law with nonlocal flux.},
  author={B{\"u}rger, Raimund and Contreras, Harold Deivi and Villada, Luis Miguel},
  journal={Networks Heterogen. Media},
  volume={18},
  number={2},
  year={2023}
}

@article{friedrich2019maximum,
  title={Maximum principle satisfying {{CWENO}} schemes for nonlocal conservation laws},
  author={Friedrich, Jan and Kolb, Oliver},
  journal={SIAM J. Sci. Comput.},
  volume={41},
  number={2},
  pages={A973--A988},
  year={2019},
  publisher={SIAM}
}

@article{chalons2018high,
  title={High-order numerical schemes for one-dimensional nonlocal conservation laws},
  author={Chalons, Christophe and Goatin, Paola and Villada, Luis M},
  journal={SIAM J. Sci. Comput.},
  volume={40},
  number={1},
  pages={A288--A305},
  year={2018},
  publisher={SIAM}
}

@article{borsche2015differential,
  title={Differential equations modeling crowd interactions},
  author={Borsche, Raul and Colombo, Rinaldo M and Garavello, Mauro and Meurer, Anne},
  journal={J. Nonlinear Sci.},
  volume={25},
  number={4},
  pages={827--859},
  year={2015},
  publisher={Springer}
}

@article{Sab97,
  title = {The Optimal Convergence Rate of Monotone Finite Difference Methods for Hyperbolic Conservation Laws},
  author = {Sabac, Florin},
  year = 1997,
  month = dec,
  journal = {SIAM J. Numer. Anal.	},
  volume = {34},
  number = {6},
  pages = {2306--2318},
  issn = {0036-1429, 1095-7170},
  doi = {10.1137/S003614299529347X},
  urldate = {2026-01-21},
  langid = {english}
}

@article{colombo2018nonlocal,
  title={Nonlocal conservation laws in bounded domains},
  author={Colombo, Rinaldo M and Rossi, Elena},
  journal={SIAM J. Math. Anal.},
  volume={50},
  number={4},
  pages={4041--4065},
  year={2018},
  publisher={SIAM}
}

@article{PT09,
  title = {Pedestrian flows in bounded domains with obstacles},
  author = {Piccoli, Benedetto and Tosin, Andrea},
  year = 2009,
  month = jul,
  journal = {Continuum Mech. Thermodyn.},
  volume = {21},
  pages = {85--107},
  doi = {10.1007/s00161-009-0100-x},
  abstract = {In this paper, we systematically apply the mathematical structures by time-evolving measures developed in a previous work to the macroscopic modeling of pedestrian flows. We propose a discrete-time Eulerian model, in which the space occupancy by pedestrians is described via a sequence of Radon-positive measures generated by a push-forward recursive relation. We assume that two fundamental aspects of pedestrian behavior rule the dynamics of the system: on the one hand, the will to reach specific targets, which determines the main direction of motion of the walkers; on the other hand, the tendency to avoid crowding, which introduces interactions among the individuals. The resulting model is able to reproduce several experimental evidences of pedestrian flows pointed out in the specialized literature, being at the same time much easier to handle, from both the analytical and the numerical point of view, than other models relying on nonlinear hyperbolic conservation laws. This makes it suitable to address two-dimensional applications of practical interest, chiefly the motion of pedestrians in complex domains scattered with obstacles.},
}

@article{colombo2015nonlocal,
  title={Nonlocal systems of balance laws in several space dimensions with applications to laser technology},
  author={Colombo, Rinaldo M and Marcellini, Francesca},
  journal={J. Differ. Equ.},
  volume={259},
  number={11},
  pages={6749--6773},
  year={2015},
  publisher={Elsevier}
}

@article{aggarwal2016crowd,
  title={Crowd dynamics through non-local conservation laws},
  author={Aggarwal, Aekta and Goatin, Paola},
  journal={Bull. Braz. Math. Soc. (N.S.)},
  volume={47},
  number={1},
  pages={37--50},
  year={2016},
  publisher={Springer}
}

@article{aggarwal2025convergence,
  title={Convergence of the numerical approximations and well-posedness: Nonlocal conservation laws with rough flux},
  author={Aggarwal, Aekta and Vaidya, Ganesh},
  journal={Math. Comp.},
  volume={94},
  number={352},
  pages={585--610},
  year={2025}
}

@article{gong2026existence,
  title={Existence and uniqueness of nonlocal nonlinear conservation laws via fixed-point methods},
  author={Gong, Xiaoqian and Keimer, Alexander and Liverani, Lorenzo and Matin, Hossein Nick Zinat},
  journal={arXiv preprint arXiv:2604.08904},
  year={2026}
}

@article{betancourt2011nonlocal,
  title={On nonlocal conservation laws modelling sedimentation},
  author={Betancourt, Fernando and B{\"u}rger, Raimund and Karlsen, Kenneth H and Tory, Elmer M},
  journal={Nonlinearity},
  volume={24},
  number={3},
  pages={855--885},
  year={2011}
}

@article{chiarello2018global,
  title={Global entropy weak solutions for general non-local traffic flow models with anisotropic kernel},
  author={Chiarello, Felisia Angela and Goatin, Paola},
  journal={ESAIM: Math. Model. Numer. Anal.},
  volume={52},
  number={1},
  pages={163--180},
  year={2018}
}

@article{chiarello2020non,
  title={A non-local traffic flow model for 1-to-1 junctions},
  author={Chiarello, Felisia Angela and Friedrich, Jan and Goatin, Paola and G{\"o}ttlich, Simone and Kolb, Oliver},
  journal={Eur. J. Appl. Math.},
  volume={31},
  number={6},
  pages={1029--1049},
  year={2020},
  publisher={Cambridge University Press}
}

@article{manoj2026positivity,
  title={A positivity-preserving second-order scheme for multi-dimensional systems of nonlocal conservation laws},
  author={Manoj, Nikhil and Gowda, GD Veerappa and Kumar, K Sudarshan},
  journal={Z. Angew. Math. Phys.},
  volume={77},
  number={3},
  pages={98},
  year={2026},
  publisher={Springer}
}

@article{gottlich2014modeling,
  title={Modeling, simulation and validation of material flow on conveyor belts},
  author={G{\"o}ttlich, Simone and Hoher, Simon and Schindler, Patrick and Schleper, Veronika and Verl, Alexander},
  journal={Appl. Math. Model.},
  volume={38},
  number={13},
  pages={3295--3313},
  year={2014},
  publisher={Elsevier}
}

@article{chiarello2023existence,
  title={Existence of entropy weak solutions for 1D non-local traffic models with space-discontinuous flux},
  author={Chiarello, FA and Contreras, HD and Villada, LM},
  journal={J. Engrg. Math.},
  volume={141},
  number={1},
  pages={9},
  year={2023}
}

@article{friedrich2021nonlocal,
  title={Nonlocal approaches for multilane traffic models},
  author={Friedrich, Jan and G{\"o}ttlich, Simone and Rossi, Elena},
  journal={Communications in Mathematical Sciences},
  volume={19},
  number={8},
  pages={2291--2317},
  year={2021},
  publisher={International Press of Boston}
}

@article{colombo2023overview,
  title={An overview on the local limit of non-local conservation laws, and a new proof of a compactness estimate},
  author={Colombo, Maria and Crippa, Gianluca and Marconi, Elio and Spinolo, Laura V},
journal = {J. Équ. Dériv. Partielles},
  pages={1--14},
  year={2023}
}

@article{colombo2024multidimensional,
  title={On multidimensional nonlocal conservation laws with {BV} kernels},
  author={Colombo, Maria and Crippa, Gianluca and Spinolo, Laura V},
  journal={arXiv preprint arXiv:2408.02423},
  year={2024}
}

@article{colombo2019singular,
  title={On the singular local limit for conservation laws with nonlocal fluxes},
  author={Colombo, Maria and Crippa, Gianluca and Spinolo, Laura V},
  journal = {Arch. Ration. Mech. Anal.},
  volume={233},
  number={3},
  pages={1131--1167},
  year={2019},
  publisher={Springer}
}

@article{abreu2025semi,
  title={Semi-discrete Lagrangian--Eulerian approach based on the weak asymptotic method for nonlocal conservation laws in several dimensions},
  author={Abreu, Eduardo and Juajibioy, Juan and Lambert, Wanderson and others},
  journal = {J. Comput. Appl. Math.},
  volume={458},
  pages={116325},
  year={2025},
  publisher={Elsevier}
}

@article{CS26,
  title = {Non Conservative and Non Local Dynamics of Clusters},
  author = {Colombo, Rinaldo M. and Salvadori, Andrea},
  year = 2026,
  month = aug,
  journal = {Nonlinear Analysis},
  volume = {269},
  pages = {114106},
  issn = {0362546X},
  doi = {10.1016/j.na.2026.114106},
  urldate = {2026-06-10},
  langid = {english}
}

  \appendix
\section{Appendix}\label{sec:appendix}

We prove that the Lax-Friedrichs-type numerical flux \eqref{eq:LxF} satisfies \revb{condition} 4 in Def.\ \ref{def:flux}. We concentrate on the estimate
\begin{align}\label{eq:dblLipschitzproof}
  \left| \numfluxkn{2}{i+1}{j-\frac{1}{2}}{\rho_{i,j-1}}{\rho_{i,j}}
                - \numfluxkn{2}{i+1}{j-\frac{1}{2}}{\rho_{i,j}}{\rho_{i,j}}
                -  \numfluxkn{2}{i}{j-\frac{1}{2}}{\rho_{i,j-1}}{\rho_{i,j}} 
                + \numfluxkn{2}{i}{j-\frac{1}{2}}{\rho_{i,j}}{\rho_{i,j}} \right|\\
                 \leq (\dx \left| \rho_{i,j}^n - \rho_{i,j-1}^n \right|+ \mathcal{M} |\rho_{i,j}^n| \dx^2) L_{2,1}'
,\end{align}
since the remaining ones follow similarly.
\begin{proof}[Proof of \eqref{eq:dblLipschitzproof}]
  Using the Lax-Friedrichs-type flux function \eqref{eq:LxF} gives us
  \begin{align}\label{eq:LxF4}
    &\numfluxkn{2}{i+1}{j-\frac{1}{2}}{\rho_{i,j-1}}{\rho_{i,j}}
                - \numfluxkn{2}{i+1}{j-\frac{1}{2}}{\rho_{i,j}}{\rho_{i,j}}
                -  \numfluxkn{2}{i}{j-\frac{1}{2}}{\rho_{i,j-1}}{\rho_{i,j}} 
                + \numfluxkn{2}{i}{j-\frac{1}{2}}{\rho_{i,j}}{\rho_{i,j}}\\
    =\,& \frac{1}{2} \mleft( \flux{2}{i+1}{j-\frac{1}{2}}{\rho_{i,j-1}^{k,n}} - \flux{2}{i}{j-\frac{1}{2}}{\rho_{i,j-1}^{k,n}} - \flux{2}{i+1}{j-\frac{1}{2}}{\rho_{i,j}^{k,n}}  + \flux{2}{i}{j-\frac{1}{2}}{\rho_{i,j}^{k,n}} \mright) 
  .\end{align}
  Applying the mean value theorem on the first two and the last two flux terms yields
  \begin{align*}
    &\frac{1}{2}\Biggl( \ddx \fluxself{2}{\widetilde{\xb}^{i+\frac{1}{2},j-\frac{1}{2}}}{\rho_{i,j-1}^{k,n}}{\Rb_{i+1,j-\frac{1}{2}}^{n}} \dx - \ddx \fluxself{2}{\widehat{\xb}^{i+\frac{1}{2},j-\frac{1}{2}}}{\rho_{i,j}^{k,n}}{\Rb_{i+1,j-\frac{1}{2}}^{n}} \dx\\  
    + & \mleft( \nabla_{\Rb} \fluxself{2}{\xb^{i+\frac{1}{2},j-\frac{1}{2}}}{\rho_{i,j-1}^{k,n}}{\widetilde{\Rb}_{i+\frac{1}{2},j-\frac{1}{2}}^{n}}\! - \! \nabla_{\Rb} \fluxself{2}{\xb^{i+\frac{1}{2},j-\frac{1}{2}}}{\rho_{i,j-1}^{k,n}}{\widehat{\Rb}_{i+\frac{1}{2},j-\frac{1}{2}}^{n}} \! \mright) \! \mleft( \Rb_{i+1,j-\frac{1}{2}}^n - \Rb_{i,j-\frac{1}{2}}^n \mright) \!\!
    \Biggr) 
  \end{align*}
  for $\widetilde{\xb}^{i+\frac{1}{2},j-\frac{1}{2}} = (\widetilde{x}_1^{i+\frac{1}{2}}, x_2^{j-\frac{1}{2}})^T$ and $\widehat{\xb}^{i+\frac{1}{2},j-\frac{1}{2}} = (\widehat{x}_1^{i+\frac{1}{2}}, x_2^{j-\frac{1}{2}})^T$ with 
  $\widetilde{x}_1^{i+\frac{1}{2}}, \widehat{x}_1^{i+\frac{1}{2}} \in [x_1^{i},x_1^{i+1}]$ and $\widetilde{\Rb}_{i+\frac{1}{2},j-\frac{1}{2}}^{n}$, $\widehat{\Rb}_{i+\frac{1}{2},j-\frac{1}{2}}^{n} \in  I[{\Rb}_{i,j-\frac{1}{2}}^{n}, {\Rb}_{i+1,j-\frac{1}{2}}^{n}]$. 
  Hence, we again apply the mean value theorem and utilize assumption $(\fb)$ to obtain the following estimate on the absolute value
  \begin{align*}
    |\eqref{eq:LxF4}| \leq &\frac{1}{2}\Bigl( \Lipf{\rho}{x_1} \dx |\rho_{i,j}^{k,n}- \rho_{i,j-1}^{k,n}| + \mathcal{M} |\rho_{i,j}^{k,n}| \dx^2\\
    &\quad+  \Lipf{\rho}{\Rb} |\rho_{i,j}^{k,n}- \rho_{i,j-1}^{k,n}| \| \Rb_{i+1,j-\frac{1}{2}}^n - \Rb_{i,j-\frac{1}{2}}^n \|_2 + \mathcal{M} |\rho_{i,j}^{k,n}| \| \Rb_{i+1,j-\frac{1}{2}}^n - \Rb_{i,j-\frac{1}{2}}^n \|_2^2 \Bigr)\\
    \leq & \mleft( \dx \left| \rho_{i,j}^n - \rho_{i,j-1}^n \right|+ \mathcal{M} |\rho_{i,j}^n| \dx^2 \mright) L_{2,1}'
  \end{align*}
  with $L_{2,1}' = \frac{1}{2}\max\{\Lipf{\rho}{x_1} + \Lipf{\rho}{\Rb}\norm{\ddx \etab}_\infty \norm{\rhob_0}_{L^1}, 1 + \norm{\ddx \etab}_\infty^2 \norm{\rhob_0}_{L^1}^2  \}$. In the last inequality we employed \mbox{\cite[Lem.~A.2]{ACG15}}.
\end{proof}

\begin{proposition}\label{prop:dblLipmult}
For a multiplicative flux function $f^k\mleft( t,\xb, \rho^k, \etab * \rhob \mright) =g^k\mleft( \rho^k \mright)  \nub^k\mleft( t,\xb,\etab * \rhob \mright)$ as described in Rem.~\ref{rem:multflux} we consider the class of numerical fluxes given by \eqref{eq:F=GV}. Then there exist constants $L_{\ell,2}^G, L_{\ell,2}^G > 0$ such that the estimates
  \begin{align*}
  \Bigl| &F_\ell^k \mleft( a, c, V \mright)
    - F_\ell^k \mleft( b, c, V \mright)- F_\ell^k \mleft( a, c, W \mright)
    + F_\ell^k \mleft( b, c, W \mright) \Bigr| \leq L_{\ell,1}^G \left| a-b \right| \left| V-W \right|\\
  \Bigl| &F_\ell^k \mleft( c, a, V \mright)
    - F_\ell^k \mleft( c, b , V \mright)- F_\ell^k \mleft( c, a, W \mright)
    + F_\ell^k \mleft( c, b, W \mright) \Bigr| \leq L_{\ell,2}^G \left| a-b \right| \left| V-W \right|
\end{align*}
hold for $a,b,c \in \imgrho_k$, $V,W \in \R^M$ and $\ell=1,2$. This immediately implies assumption~4 in Def.\ \ref{def:flux}.
\end{proposition}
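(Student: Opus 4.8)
The plan is to exploit the product structure of the numerical flux in \eqref{eq:F=GV}, namely $F_\ell^k(a,b,V)=G_\ell^k(a,b;\textnormal{sgn}(V))\,|V|$, so that both mixed differences reduce to a short case distinction on the signs of $V$ and $W$. First I would abbreviate $\Delta G(s)\defeq G_\ell^k(a,c;s)-G_\ell^k(b,c;s)$ for $s\in\{-1,0,1\}$; then the left-hand side of the first inequality is exactly $\bigl|\Delta G(\textnormal{sgn}(V))\,|V|-\Delta G(\textnormal{sgn}(W))\,|W|\bigr|$, and the second inequality has the same shape with the roles of the two state slots exchanged.

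The key preliminary estimate is a Lipschitz bound $|\Delta G(s)|\le L_{\ell,1}^G\,|a-b|$ that holds \emph{uniformly} in the three admissible sign values. This is where the specific form \eqref{eq:F=GV} matters: $G_\ell^k(\cdot,c;s)$ is a monotone numerical flux for the scalar flux $\rho\mapsto g^k(\rho)\,s$, and since $|s|\le 1$ its Lipschitz constant in the first argument (for instance $\tfrac12(\sup_{\imgrho_k}|{g^k}'|+\alpha)$ for the Lax-Friedrichs choice \eqref{eq:LxFnew}, or $\sup_{\imgrho_k}|{g^k}'|$ for the Godunov choice) is bounded independently of $s$; taking the maximum over the finite set $s\in\{-1,0,1\}$ produces a single constant $L_{\ell,1}^G$, and analogously $L_{\ell,2}^G$ in the second slot.

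With this in hand I would distinguish two cases. If $\textnormal{sgn}(V)=\textnormal{sgn}(W)=:s$, the expression collapses to $|\Delta G(s)|\,\bigl||V|-|W|\bigr|$, and the elementary inequality $\bigl||V|-|W|\bigr|\le|V-W|$ together with the uniform bound closes the estimate. If $\textnormal{sgn}(V)\ne\textnormal{sgn}(W)$, then $V$ and $W$ lie on opposite sides of the origin (one possibly zero), so that $|V|+|W|=|V-W|$; applying the triangle inequality and bounding each summand separately yields $|\Delta G(\textnormal{sgn}(V))|\,|V|+|\Delta G(\textnormal{sgn}(W))|\,|W|\le L_{\ell,1}^G|a-b|(|V|+|W|)=L_{\ell,1}^G|a-b|\,|V-W|$. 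The second inequality follows verbatim using $L_{\ell,2}^G$.

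Finally, to see that this implies assumption~4 of Def.~\ref{def:flux}, I would identify $a,b$ with the neighbouring states $\rho_{i-1,j}^{k,n},\rho_{i,j}^{k,n}$, set $c=\rho_{i,j}^{k,n}$, and take $V,W$ to be the reduced velocities $V_{i-\frac12,j+1}^{1,k,n},V_{i-\frac12,j}^{1,k,n}$ at the two interfaces that differ only in the $x_2$-position. The proposition then bounds the mixed difference by $L_{1,1}^G\,|\rho_{i,j}^{k,n}-\rho_{i-1,j}^{k,n}|\,|V_{i-\frac12,j+1}^{1,k,n}-V_{i-\frac12,j}^{1,k,n}|$, and the regularity of $\nub^k$ in $(\xb,\Rb)$ together with a convolution estimate analogous to \eqref{eq:Rdiffi} (now in the $x_2$-direction) gives $|V_{i-\frac12,j+1}^{1,k,n}-V_{i-\frac12,j}^{1,k,n}|\le C\dy$; since $\mathcal{M}|\rho_{i,j}^{k,n}|\dy^2\ge 0$ is extra slack, the right-hand side of assumption~4 dominates. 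I expect no deep obstacle here: the whole argument is elementary once the product structure is used, and the one point that must be handled with care is keeping $G_\ell^k$ as a \emph{single} monotone flux for $g^k\,s$ rather than peeling off the numerical diffusion, because only then is the Lipschitz constant genuinely independent of the sign and the factorization through $|V-W|$ legitimate.
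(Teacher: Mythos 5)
Your proof is correct and takes essentially the same route as the paper's: the same product-structure identity $F_\ell^k = G_\ell^k\,|V|$, the same two-case distinction on $\textnormal{sgn}(V)$ versus $\textnormal{sgn}(W)$, and the same combination of the Lipschitz continuity of $G_\ell^k$ with $\bigl| |V|-|W| \bigr| \le |V-W|$ in the equal-sign case and $|V|+|W| \le |V-W|$ in the opposite-sign case. Your closing paragraph deriving assumption~4 of Def.~\ref{def:flux} (choosing $a,b,c$ as the neighbouring cell values, $V,W$ as the reduced velocities at the two interfaces, and bounding $|V-W|\le C\dy$ via the regularity of $\nub^k$ and an $x_2$-analogue of \eqref{eq:Rdiffi}) is a correct elaboration of a step the paper merely asserts as immediate.
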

\begin{proof}
  We only prove the first inequality, since the second one is analogous. In the following we use the identity
  \begin{align*}
    &F_\ell \mleft( a, c, V \mright)
    - F_\ell \mleft( b, c, V \mright)- F_\ell \mleft( a, c, W \mright)
    + F_\ell \mleft( b, c, W \mright)\\
    & =\bigl( G_\ell( a, c, \textnormal{sgn}(V) ) - G_\ell( b, c, \textnormal{sgn}(V))\bigr)  |V| 
      - \bigl( G_\ell( a, c, \textnormal{sgn}(W) ) - G_\ell( b, c, \textnormal{sgn}(W))\bigr)  |W|
  .\end{align*}
We consider two cases depending on the signs of $V$ and $W$.

\begin{enumerate}
  \item[\textit{1.}] \textit{case:} sgn($V$)=sgn($W$). We have
        \begin{align*}
          \Bigl| &F_\ell \mleft( a, c, V \mright)- F_\ell \mleft( b, c, V \mright)
                - F_\ell \mleft( a, c, W \mright)+ F_\ell \mleft( b, c, W \mright) \Bigr|\\
                &= \bigl| \bigl( G_\ell( a, c, \textnormal{sgn}(V) ) - G_\ell( b, c, \textnormal{sgn}(V))\bigr) \left( |V|-|W| \right) \bigr| \\
                &\leq  \bigl| G_\ell( a, c, \textnormal{sgn}(V) ) - G_\ell( b, c, \textnormal{sgn}(V))\bigr| \big| |V|-|W| \big|.
        \end{align*}
        With the Lipschitz continuity of $G_\ell$ and $\big| |V|-|W| \big| = \left| V-W\right| $ we obtain 
        \begin{align*}
          \Bigl| &F_\ell \mleft( a, c, V \mright)- F_\ell \mleft( b, c, V \mright)
                - F_\ell \mleft( a, c, W \mright)+ F_\ell \mleft( b, c, W \mright) \Bigr| \leq  L_{\ell,1}^G |a-b| |V-W|.
        \end{align*}
  \item[\textit{2.}] \textit{case:} sgn$(V)\neq \text{sgn}(W)$.
        Without loss of generality we assume sgn$(V)=1$ and sgn$(W)=-1$, such that
        \begin{align*}
          \Bigl| &F_\ell \mleft( a, c, V \mright)- F_\ell \mleft( b, c, V \mright)
                - F_\ell \mleft( a, c, W \mright)+ F_\ell \mleft( b, c, W \mright) \Bigr|\\
                  & \leq \bigl| G_\ell( a, c, \textnormal{sgn}(V) ) - G_\ell( b, c, \textnormal{sgn}(V))\bigr|  |V| 
                  + \bigl| G_\ell( a, c, \textnormal{sgn}(W) ) - G_\ell( b, c, \textnormal{sgn}(W))\bigr|  |W|\\
                  & \leq L_{\ell,1}^G |a-b| \mleft( |V| + |W| \mright)
        .\end{align*}
        Due to the signs of $V$ and $W$ it holds $(|V| + |W|) = (V - W) \leq |V-W|$.
\end{enumerate}
Hence, in both cases the claim holds.
\end{proof}

\begin{proof}[Proof of \eqref{eq:4Rdiff}]
  \begin{align}\label{eq:4R}
    \norm{R_{i+1,j-\frac{1}{2}}^n &- R_{i,j-\frac{1}{2}}^n - R_{i+1,j+\frac{1}{2}}^n + R_{i,j+\frac{1}{2}}^n }_2 \\
    &\leq \dx \dy \sum_{p,q\in\Z} \left\lVert \mleft( \etab_{p+1,q-\frac{1}{2}} - \etab_{p,q-\frac{1}{2}} - \etab_{p+1,q+\frac{1}{2}} + \etab_{p,q+\frac{1}{2}}  \mright) \rhob^{n}_{i-p,j-q}  \right\rVert _2 \notag\\
    &\leq \dx \dy \sum_{p,q\in\Z} \left\lVert \etab_{p+1,q-\frac{1}{2}} - \etab_{p,q-\frac{1}{2}} - \etab_{p+1,q+\frac{1}{2}} + \etab_{p,q+\frac{1}{2}}   \right\rVert _2 \left\lVert \rhob^{n}_{i-p,j-q}  \right\rVert _2.
  \end{align}
  Applying the mean value theorem componentwise on the matrices leaves us with
  \begin{align*}
    &\eta^{m,k}_{p+1,q-\frac{1}{2}} - \eta^{m,k}_{p,q-\frac{1}{2}} - \eta^{m,k}_{p+1,q+\frac{1}{2}} + \eta^{m,k}_{p,q+\frac{1}{2}}\\
    =& \eta^{m,k}( x_1^{p+1},x_2^{q-\frac{1}{2}}) - \eta^{m,k}( x_1^{p},x_2^{q-\frac{1}{2}})  - \eta^{m,k}( x_1^{p+1},x_2^{q+\frac{1}{2}})  + \eta^{m,k}( x_1^{p},x_2^{q+\frac{1}{2}})\\ 
    =&\ddx \eta^{m,k}( \tilde{x}_1^{p+\frac{1}{2}},x_2^{q-\frac{1}{2}}) \dx - \ddx \eta^{m,k}( \hat{x}_1^{p+\frac{1}{2}},x_2^{q+\frac{1}{2}}) \dx
  \end{align*}
  for $\tilde{x}_1^{p+\frac{1}{2}}, \hat{x}_1^{p+\frac{1}{2}} \in [x_1^{p},x_1^{p+1}]$ varying for each $m=1,\ldots,M, \; k=1,\ldots,K$.
  Next, 
  \begin{align*}
    &\ddx \eta^{m,k}( \tilde{x}_1^{p+\frac{1}{2}},x_2^{q-\frac{1}{2}}) \dx - \ddx \eta^{m,k}( \hat{x}_1^{p+\frac{1}{2}},x_2^{q+\frac{1}{2}}) \dx\\
    =& \partial_{x_1 x_1}^2 \eta^{m,k}( \breve{x}_1^{p+\frac{1}{2}},x_2^{q-\frac{1}{2}}) ( \tilde{x}_1^{p+\frac{1}{2}} - \hat{x}_1^{p+\frac{1}{2}}) \dx - \partial_{x_1 x_2}^2 \eta^{m,k}(\hat{x}_1^{p+\frac{1}{2}}, \overline{x}_2^{q}) \dx \dy
  \end{align*}
  for $ \breve{x}_1^{p+\frac{1}{2}} \in I[\hat{x}_1^{p+\frac{1}{2}}, \tilde{x}_1^{p+\frac{1}{2}}]$ and $\overline{x}_2^{q} \in [x_2^{q-\frac{1}{2}},x_2^{q+\frac{1}{2}}]$.
  Turning back to \eqref{eq:4R} we have
  \begin{align*}
    \norm{R_{i+1,j-\frac{1}{2}}^n &- R_{i,j-\frac{1}{2}}^n - R_{i+1,j+\frac{1}{2}}^n + R_{i,j+\frac{1}{2}}^n }_2\\
    &\leq \mleft( \left\lVert \partial_{x_1 x_1}^2 \etab \right\rVert _{\infty} 2 \dx^2 + \left\lVert \partial_{x_1 x_2}^2 \etab \right\rVert _{\infty}  \dx \dy\mright)  \dx \dy \sum_{p,q\in\Z} \left\lVert \rhob^{n}_{i+p,j+q}  \right\rVert _2\\
    &\leq  \mleft( \left\lVert \partial_{x_1 x_1}^2 \etab \right\rVert _{\infty} 2 \dx^2 + \left\lVert \partial_{x_1 x_2}^2 \etab \right\rVert _{\infty}  \dx \dy\mright)  \left\lVert \rhob_0 \right\rVert _{L^1}
  .\end{align*}
\end{proof}

\begin{proof}
    We again focus on the significant differences compared to the proof of \cite[Lem.~5.1]{aggarwal2024well}. We introduce the notation of time-space cells $C_{i,j}^n = [t^n,t^{n+1}) \times C_{i,j}$ and we will denote various constants $c_\ell \geq 0$, $\ell \in \N$ that depend on $\rhob_0, \etab, \fb$, the Lipschitz constants in Def.\ \ref{def:flux}  and $\delta=\frac{\dx}{\dy}$.
    Further, we define for $n \in \{0,\ldots,N_T \}, i,j \in \Z, k=1,\ldots,K, (t,\xb) \in \overline{Q}_T$
    \begin{align*}
        p_{i,j}^{k,n}(\kappa, R) \coloneq \textnormal{sgn}\mleft( \rho_{i,j}^{k,n} - \kappa \mright) \mleft( \fb^k\mleft( t,\xb,\rho_{i,j}^{k,n},R \mright) - \fb^k\mleft( t,\xb,\kappa,R \mright)  \mright)
    .\end{align*}
    We consider $-\Lambda_{\epsilon,\epsilon_0}^k(\rho_\Delta^k,\varrho^k)$ for the discrete $\rhob_\Delta$, perform summation by parts and add a zero to obtain
    \begin{align*}
        -\Lambda_{\epsilon,\epsilon_0}^k&(\rho_\Delta^k,\varrho^k) =\int_{Q_T} \sum_{i,j \in \Z} \sum_{n=0}^{N_T-1} \mleft[ \left| \rho_{i,j}^{k,n+1} - \varrho^k(s,\yb) \right| - \left| \rho_{i,j}^{k,n} - \varrho^k(s,\yb) \right| \mright] \int_{C_{i,j}}\phi\mleft( s,\yb,t^{n+1},\xb \mright) d\xb \, ds \, d\yb\\
        &- \int_{Q_T} \sum_{i,j \in \Z} \sum_{n=0}^{N_T-1} \int_{C_{i,j}^n} p_{i,j}^{k,n}(\varrho^k(s,\yb), \Rb_{i+\frac{1}{2},j}^n)  \mydot \nabla_\xb \phi dt \, d\xb \, ds \, d\yb  \\
        &- \int_{Q_T} \sum_{i,j \in \Z} \sum_{n=0}^{N_T-1} \int_{C_{i,j}^n} \mleft[ p_{i,j}^{k,n}(\varrho^k(s,\yb), \contconv{\rhob_\Delta}) - p_{i,j}^{k,n}(\varrho^k(s,\yb), \Rb_{i+\frac{1}{2},j}^n)\mright] \mydot \nabla_\xb \phi dt \, d\xb \, ds \, d\yb  \\
        & + \int_{Q_T} \sum_{i,j \in \Z} \sum_{n=0}^{N_T-1} \int_{C_{i,j}^n}\textnormal{sgn}\mleft( \rho_{i,j}^{k,n} - \varrho^k(s,\yb) \mright) \Div \fb^k\mleft( t, \xb, \varrho^k(s,\yb), \contconv{\rhob_\Delta} \mright)   \phi \, dt \, d\xb \, ds \, d\yb \\
        & \coloneq \biglambda_1 + \biglambda_2' + \bigeps_{2} + \biglambda_3.
    \end{align*}
    We note that we adapted here the notation of \cite{aggarwal2024well} as far as possible\footnote{The corresponding terms to $\biglambda_1, \biglambda_2', \biglambda_3$ in \cite{aggarwal2024well} are denoted by $\lambda_1, \lambda_2', \lambda_3$.} to increase the clarity when referring to steps in \cite{aggarwal2024well}.
    The derivatives in $\biglambda_2',\bigeps_{2}, \biglambda_3$ can be expressed as a sum of terms that involve either the first or the second component of the flux function. In the following we will focus solely on the terms for the first components denoted by $\biglambda_{2}'^1, \bigeps_{2}^1, \biglambda_{3}^1$, while the other terms follow analogously. Thus, we also denote the first component of $p_{i,j}^{k,n}$ by $p_{i,j,1}^{k,n}$.
    For an estimate on $\bigeps_{2}^1$ we need to bound 
    \begin{align*}
        & \left| p_{i,j,1}^{k,n}\mleft( \varrho^k(s,\yb),\contconv{\rhob_\Delta} \mright) - p_{i,j,1}^{k,n}\mleft( \varrho^k(s,\yb), \Rb_{i+\frac{1}{2},j}^n \mright) \right|\\
        = & \Bigl| f_1^k\mleft( t,\xb,\rho_{i,j}^{k,n},\contconv{\rhob_\Delta} \mright) - f_1^k\mleft( t,\xb,\varrho^k(s,\yb),\contconv{\rhob_\Delta} \mright)\\
        & - f_1^k\mleft( t,\xb,\rho_{i,j}^{k,n},\Rb_{i+\frac{1}{2},j}^n \mright) + f_1^k\mleft( t,\xb, \varrho^k(s,\yb),\Rb_{i+\frac{1}{2},j}^n \mright) \Bigr|
    .\end{align*}
    Hence, we consider $\rho \in \imgrho_k$ for both, replacing $\varrho^k(s,\yb)$ and $\rho_{i,j}^{k,n}$, respectively, and $(t,\xb) \in C_{i,j}^n$
    \begin{align}\label{eq:difffconv}
        &\left| f_1^k\mleft( t,\xb,\rho,\contconv{\rhob_\Delta} \mright) - f_1^k\mleft( t,\xb^\reva{i+\frac{1}{2},j},\rho,\Rb_{i+\frac{1}{2},j}^n\mright) \right| \\
        \leq & \left| f_1^k\mleft( t,\xb,\rho,\contconv{\rhob_\Delta} \mright) - f_1^k\mleft( t,\xb,\rho,\contconarg{\rhob_\Delta}{t^n}{\xb} \mright) \right| \\
        & + \left|f_1^k\mleft( t,\xb,\rho,\contconarg{\rhob_\Delta}{t^n}{\xb} \mright) - f_1^k\mleft( t^n,\xb,\rho,\contconarg{\rhob_\Delta}{t^n}{\xb} \mright) \right| \\
        & + \left| f_1^k\mleft( t^n,\xb,\rho,\contconarg{\rhob_\Delta}{t^n}{\xb} \mright) - f_1^k\mleft( t^n,\xb,\rho,\contconarg{\rhob_\Delta}{t^n}{\xb^{i+\frac{1}{2},j}} \mright) \right| \\
        & + \left| f_1^k\mleft( t^n,\xb,\rho,\contconarg{\rhob_\Delta}{t^n}{\xb^{i+\frac{1}{2},j}} \mright) -  f_1^k\mleft( t^n,\xb^{i+\frac{1}{2},j},\rho,\contconarg{\rhob_\Delta}{t^n}{\xb^{i+\frac{1}{2},j}} \mright)\right| \\
        & + \left|  f_1^k\mleft( t^n,\xb^{i+\frac{1}{2},j},\rho,\contconarg{\rhob_\Delta}{t^n}{\xb^{i+\frac{1}{2},j}} \mright) -  f_1^k\mleft( t^n,\xb^{i+\frac{1}{2},j},\rho, \Rb_{i+\frac{1}{2},j}^n \mright)\right| 
    .\end{align}
    In each case, the estimation of these five terms begins with using the mean value theorem and bounding the derivatives of $f_1^k$ by $\mathcal{M}|\rho|$ using assumption $(\fb)$. Next, the differences of the arguments have to be considered and for the first, third and last line we can proceed as in \cite[p.\ 3374]{aggarwal2024well} giving us (in two space dimensions) bounds consisting of a constant times $(\dx + \dy)$. We simplify this using $\dy = \dx/\delta$. For the second and fourth line we estimate $|t-t^n|$ and $\| \xb - \xb^{i+\frac{1}{2},j} \|_2$. Overall, we have
    \begin{align}
        \left| f_1^k\mleft( t,\xb,\rho,\contconv{\rhob_\Delta} \mright) - f_1^k\mleft( t,\xb^\reva{i+\frac{1}{2},j},\rho,\Rb_{i+\frac{1}{2},j}^n\mright) \right| 
        \leq c_1 |\rho| \dx
    \end{align}
    such that we obtain
    \begin{align*}
        \left| \bigeps_2^1 \right|
        \leq & c_1 \dx \int_{Q_T} \sum_{i,j \in \Z} \sum_{n=0}^{N_T-1} \mleft( |\rho_{i,j}^{k,n}| + \left| \varrho^k(s,\yb) \right|  \mright) \int_{C_{i,j}^n} \left| \ddx \phi (s,\yb,t,\xb) \right| dt \; d\xb \; ds \; d\yb,
    \end{align*}
    which can be estimated analogously to \cite[p.\ 3374]{aggarwal2024well} by $\frac{\dx}{\varepsilon}$ times \reva{$c_1$ and} a constant depending on the $L^1$-norms of $\rho^k_{\Delta}$ and $\varrho^k$.
    Next, the term $\biglambda_3^1$ is estimated, which can be rewritten into
    \begin{align*}
        \biglambda_3^1  = & \int_{Q_T} \sum_{i,j \in \Z} \sum_{n=0}^{N_T-1} \textnormal{sgn}\mleft( \rho_{i,j}^{k,n} - \varrho^k(s,\yb) \mright) \Bigl( f_1^k\mleft( t^n, \xb^{i+\frac{1}{2},j}, \varrho^k(s,\yb), \Rb_{i+\frac{1}{2},j}^n \mright)\\
        & \qquad \qquad - f_1^k\mleft( t^n, \xb^{i-\frac{1}{2},j}, \varrho^k(s,\yb), \Rb_{i-\frac{1}{2},j}^n \mright) \Bigr) \times    \int_{C^n} \int_{x_2^{j-\frac{1}{2}}}^{x_2^{j+\frac{1}{2}}}  \phi(s, \yb, t, (x_1^{i+\frac{1}{2}},x_2)) dx_2 \, dt \, ds \, d\yb  
    \end{align*}
    \begin{align*}
        &+\int_{Q_T} \sum_{i,j \in \Z} \sum_{n=0}^{N_T-1}\textnormal{sgn}\mleft( \rho_{i,j}^{k,n} - \varrho^k(s,\yb) \mright) \int_{C_{i,j}^n} \frac{d}{dx_1} f_1^k\mleft( t, \xb, \varrho^k(s,\yb), \contconv{\reva{\rhob_\Delta}} \mright)  \\
        &  \hspace{6cm}\times  \mleft( \phi(s, \yb, t, \xb) - \phi(s, \yb, t, (x_1^{i+\frac{1}{2}},x_2)) \mright) \reva{dt} \, d\xb \, ds \, d\yb \\
        &+\int_{Q_T} \sum_{i,j \in \Z} \sum_{n=0}^{N_T-1}\textnormal{sgn}\mleft( \rho_{i,j}^{k,n} - \varrho^k(s,\yb) \mright) \int_{C_{i,j}^n} \phi(s, \yb, t, (x_1^{i+\frac{1}{2}},x_2))  \\
        & \hspace{1cm}\times   \Biggl(\frac{d}{dx_1} f_1^k\mleft( t, \xb, \varrho^k(s,\yb), \contconv{\reva{\rhob_\Delta}} \mright)\\
        &\hspace{2cm} - \frac{f_1^k\mleft( t^n, \xb^{i+\frac{1}{2},j}, \varrho^k(s,\yb), \Rb_{i+\frac{1}{2},j}^n \mright) - f_1^k\mleft( t^n, \xb^{i-\frac{1}{2},j}, \varrho^k(s,\yb), \Rb_{i-\frac{1}{2},j}^n \mright)}{\dx} \Biggr) dt \, d\xb \, ds \, d\yb \\
            \eqcolon &\; \biglambda_3'^1 + \bigeps_{31} + \bigeps_{32}
    .\end{align*}
    Now, for $\left| \bigeps_{31} \right|$  we can use analogous inequalities as in \cite[p.\ 3375]{aggarwal2024well} with first applying 
    $$\left| \frac{d}{dx_1} f_1^k\mleft( t, \xb, \varrho^k(s,\yb), \contconv{\reva{\rhob_\Delta}} \mright) \right| \leq \mathcal{M} |\varrho^k(s,\yb)| (1+ \| \partial_{x_1} \revb{\etab} \|_{L^\infty} \| \rhob_0 \|_{L^1} \reva{)}$$
    such that $\left| \bigeps_{31} \right| = \mathcal{O}\mleft(  \frac{\dx}{\varepsilon} \mright)$. The estimates for
    $\left| \bigeps_{32} \right|$ are as well similar to \cite[p.\ 3376]{aggarwal2024well}, but with the differences of $f_1^k$ instead of the convolution terms, and we obtain the additional term 
    \begin{align*}
        &\int_{Q_T} \sum_{i,j \in \Z} \sum_{n=0}^{N_T-1} \int_{C^n} \int_{x_2^{j-\frac{1}{2}}}^{x_2^{j+\frac{1}{2}}} \phi(s, \yb, t, (x_1^{i+\frac{1}{2}},x_2))
        \Bigl(  f_1^k\mleft( t^n, (x_1^{i+\frac{1}{2}},x_2)^T, \varrho^k(s,\yb), \contconarg{\rhob_\Delta}{t^n}{(x_1^{i+\frac{1}{2}},x_2)} \mright)\\
        & - f_1^k\mleft( t^n, \xb^{i+\frac{1}{2},j}, \varrho^k(s,\yb), \contconarg{\rhob_\Delta}{t^n}{\xb^{i+\frac{1}{2},j}} \mright)
        -f_1^k\mleft( t^n, (x_1^{i-\frac{1}{2}},x_2)^T, \varrho^k(s,\yb), \contconarg{\rhob_\Delta}{t^n}{(x_1^{i-\frac{1}{2}},x_2)} \mright)\\
        & + f_1^k\mleft( t^n, \xb^{i-\frac{1}{2},j}, \varrho^k(s,\yb), \contconarg{\rhob_\Delta}{t^n}{\xb^{i-\frac{1}{2},j}} \mright)  \Bigr) dx_2\; dt\; ds\; d\yb
    \end{align*}
    that can be treated as the second summand of $\left|  \bigeps_{32} \right| $ in \cite{aggarwal2024well}: We perform summation by parts in $i$ to derive a difference of the test functions and to reduce the four flux terms to two
    and use the bounds for the differences in $f_1^k$, where in our setting the assumption $(\fb)$ and \eqref{eq:difffconv} \reva{are} used. Based on this, the integrals over the difference of the test functions in $x_1$ can be estimated by the BV-norm of $\omega_{\epsilon}$, i.e.\
    \begin{align*}
        \sum_{i,j \in \Z} \sum_{n=0}^{N_T-1} \int_{C^n} \int_{x_2^{j-\frac{1}{2}}}^{x_2^{j+\frac{1}{2}}} \left|\phi(s, \yb, t, (x_1^{i+\frac{1}{2}},x_2)) - \phi(s, \yb, t, (x_1^{i-\frac{1}{2}},x_2)) \right| \leq \left| \omega_\eps \right|_{BV(\R)} = \frac{1}{\varepsilon}
    .\end{align*}
    Thus, we have proven 
    \begin{align*}
        -\Lambda_{\epsilon,\epsilon_0}^k(\rho_\Delta^k,\varrho^k) = \biglambda_1 + \biglambda_2' +  \biglambda_3' + \mathcal{O}\mleft( \frac{\dx}{\varepsilon} \mright) 
    .\end{align*}
    The next steps are very similar to those in \cite[pp.\ 3378-3381]{aggarwal2024well}, i.e.\ we \revb{first} exploit the discrete entropy inequality for $\biglambda_1$ to split it up\footnote{The terms $A_2, A_3$ in \cite{aggarwal2024well} are here again expressed as two terms each, according to the two directions.} into $A_2^1+A_2^2+A_3^1+A_3^2$,
    \revb{where
    \begin{align*}
      A_2^1 = -\lambda_1 \int_{Q_T} \sum_{i,j \in \Z} \sum_{n=0}^{N_T-1}  &\mleft( \entropyfluxknkappa{1}{i+\frac{1}{2}}{j}{\rho_{i,j}^{k,n}}{\rho_{i+1,j}^{k,n}}{\varrho^k(s,\yb)} -\entropyfluxknkappa{1}{i-\frac{1}{2}}{j}{\rho_{i-1,j}^{k,n}}{\rho_{i,j}^{k,n}}{\varrho^k(s,\yb)}\mright) \\
         & \hspace{5cm}  \times \mleft( \int_{C_{i,j}}\phi(s,\yb,t^{n+1},\xb) d\xb \mright) ds \, d\yb,
    \end{align*}
    \begin{align*}
      A_3^1 = -\lambda_1 \int_{Q_T} \sum_{i,j \in \Z} \sum_{n=0}^{N_T-1} &\textnormal{sgn}\mleft( \rho_{i,j}^{k,n+1} - \varrho^k(s,\yb) \mright)
      \Bigl( f_1^k\mleft( t^n, \xb^{i+\frac{1}{2},j}, \varrho^k(s,\yb), \Rb_{i+\frac{1}{2},j}^n \mright)\\
       & - f_1^k\mleft( t^n, \xb^{i-\frac{1}{2},j}, \varrho^k(s,\yb), \Rb_{i-\frac{1}{2},j}^n \mright)\Bigr)
       \times \mleft( \int_{C_{i,j}}\phi(s,\yb,t^{n+1},\xb) d\xb \mright) ds \, d\yb
    .\end{align*}
    Then, we}
    apply the fundamental theorem of calculus and do summation by parts for $\biglambda_2'^1$. We also follow the first steps in \cite[Claim 1]{aggarwal2024well} to bound $A_2^1 + \biglambda_2'^1$, i.e.\ adding and subtracting a term and performing summation by parts to obtain
    \begin{align*}
        A_2^1 + \biglambda_2'^1 =&\; \lambda_1 \int_{Q_T} \sum_{i,j \in \Z} \sum_{n=0}^{N_T-1} \mleft( \entropyfluxknkappa{1}{i+\frac{1}{2}}{j}{\rho_{i,j}^{k,n}}{\rho_{i+1,j}^{k,n}}{\varrho^k(s,\yb)} - p_{i,j}^{k,n}(\varrho^k(s,\yb), R_{i+\frac{1}{2},j}^{k,n})\mright) \\
         & \hspace{2cm}  \times  \mleft( \int_{C_{i+1,j}} \phi(s,\yb,t^{n+1},\xb) d\xb - \int_{C_{i,j}} \phi(s,\yb,t^{n+1},\xb) d\xb \mright) ds \, d\yb\\
        &+ \int_{Q_T} \sum_{i,j \in \Z} \sum_{n=0}^{N_T-1} \mleft( p_{i,j\reva{,1}}^{k,n}(\varrho^k(s,\yb), R_{i+\frac{1}{2},j}^{k,n}) - p_{i-1,j\reva{,1}}^{k,n}(\varrho^k(s,\yb), R_{i-\frac{1}{2},j}^{k,n}) \mright)\\
        & \hspace{2cm} \times  \mleft( \int_{C^n} \int_{x_2^{j-\frac{1}{2}}}^{x_2+\frac{1}{2}} \phi\mleft( s,\yb,t,\mleft( x_1^{\reva{i-\frac{1}{2}}},x_2 \mright)^T  \mright) dt - \lambda_1 \int_{C_{i,j}} \phi\mleft( s,\yb,t^{n+1},\xb \mright) d\xb \mright) 
    ,\end{align*}
    where 
    \begin{align*}
        \mleft| \entropyfluxknkappa{1}{i+\frac{1}{2}}{j}{\rho_{i,j}^{k,n}}{\rho_{i+1,j}^{k,n}}{\varrho^k(s,\yb)} - p_{i,j,1}^{k,n}(\varrho^k(s,\yb), R_{i+\frac{1}{2},j}^{k,n})\mright| \leq c_2 \left| \rho_{i+1,j}^{k,n} - \rho_{i,j}^{k,n} \right| 
    .\end{align*}
    Moreover, we need an estimate on
    \begin{subequations}
    \begin{align}
        & \mleft| p_{i,j,1}^{k,n}(\varrho^k(s,\yb), R_{i+\frac{1}{2},j}^{k,n}) - p_{i-1,j,1}^{k,n}(\varrho^k(s,\yb), R_{i-\frac{1}{2},j}^{k,n}) \mright|\hspace{7cm}
    \end{align}
    \vspace*{-1.5em}
    \begin{equation} \label{eq:pdiffa}
        \begin{aligned}
        \leq  & \Bigl| \textnormal{sgn}\mleft( \rho_{i,j}^{k,n} - \varrho^k(s,\yb)\mright) \Bigl[ f_1^k\mleft( t^n, \xb^{i+\frac{1}{2},j}, \rho_{i,j}^{k,n}, \Rb_{i+\frac{1}{2},j}^n \mright) - f_1^k\mleft( t^n, \xb^{i+\frac{1}{2},j}, \varrho^k(s,\yb), \Rb_{i+\frac{1}{2},j}^n \mright)\\
                & \hspace{3cm}\reva{-} f_1^k\mleft( t^n, \xb^{i-\frac{1}{2},j}, \rho_{i-1,j}^{k,n}, \Rb_{i-\frac{1}{2},j}^n \mright) \reva{+} f_1^k\mleft( t^n, \xb^{i-\frac{1}{2},j}, \varrho^k(s,\yb), \Rb_{i-\frac{1}{2},j}^n \mright) \Bigr] \Bigr|
    \end{aligned}
    \end{equation}
    \vspace*{-0.5em}
    \begin{equation} \label{eq:pdiffb}
        \begin{aligned}
        & + \Bigl| \left[ \textnormal{sgn}\mleft( \rho_{i,j}^{k,n} - \varrho^k(s,\yb)\mright) - \textnormal{sgn}\mleft( \rho_{i-1,j}^{k,n} - \varrho^k(s,\yb)\mright)\right]\\
        & \hspace{2.5cm} \mleft( f_1^k\mleft( t^n, \xb^{i-\frac{1}{2},j}, \rho_{i-1,j}^{k,n}, \Rb_{i-\frac{1}{2},j}^n \mright) - f_1^k\mleft( t^n, \xb^{i-\frac{1}{2},j}, \varrho^k(s,\yb), \Rb_{i-\frac{1}{2},j}^n \mright) \mright)
                \Bigr| 
    .\end{aligned}
    \end{equation}
    \end{subequations}
    The second term \eqref{eq:pdiffb} can be bounded by $2 L_\rho \left| \rho_{i,j}^{k,n} - \rho_{i-1,j}^{k,n} \right| $ because either the two $\textnormal{sgn}$ terms are equal, causing the term to vanish, or we have that $\varrho^k(s,\yb)$ is between $\rho_{i-1,j}^{k,n}$ and $\rho_{i,j}^{k,n}$, i.e.\ after employing the Lipschitz continuity of $f_1^k$ we use $\left| \rho_{i-1,j}^{k,n} -  \varrho^k(s,\yb) \right| \leq  \left| \rho_{i-1,j}^{k,n} - \rho_{i,j}^{k,n} \right| $.
    For the first term \eqref{eq:pdiffa} successively applying the mean value theorem yields 
    \begin{align*}
        \left| \eqref{eq:pdiffa} \right| \leq &\left| \partial_{x_1x_1}^2 f_1^k\mleft( t^n, \breve{\xb}^{i,j}, \rho_{i,j}^{k,n},\Rb_{i+\frac{1}{2},j}^{k,n} \mright)  \right| \dx^2 + \Lipf{\rho}{x_1} \left| \rho_{i,j}^{k,n} - \varrho^k(s,\yb) \right| \dx + \reva{3}\Lipf{\rho}{} |\rho_{i-1,j}^{k,n} - \rho_{i,j}^{k,n} |\\
        &+ \Lipf{\rho}{\Rb} \left| \rho_{\reva{i-1},j}^{k,n} - \varrho^k(s,\yb) \right| \left\| \Rb_{i+\frac{1}{2},j}^{k,n} - \Rb_{i-\frac{1}{2},j}^{k,n} \right\|_{2}\\
        &+ \left| \nabla_{\Rb} \mydot \nabla_{\Rb} f_1\mleft( t^n, \xb^{i-\frac{1}{2},j}, \varrho^k\mleft( s,\yb\mright), \breve{\Rb}_{i,j}  \mright) \right| \left\| \Rb_{i+\frac{1}{2},j}^{k,n} - \Rb_{i-\frac{1}{2},j}^{k,n} \right\|_{2}^2 
    .\end{align*}
    Thus, using again assumption $(\fb)$ and \reva{the estimates in \cite[Lem.~A.2]{ACG15} for differences in the nonlocal terms} we are left with 
    \begin{align*}
         & \mleft| p_{i,j,1}^{k,n}(\varrho^k(s,\yb), R_{i+\frac{1}{2},j}^{k,n}) - p_{i-1,j,1}^{k,n}(\varrho^k(s,\yb), R_{i-\frac{1}{2},j}^{k,n}) \mright| \\
        \leq & c_3 \mleft[ \left| \rho_{i-1,j}^{k,n} - \rho_{i,j}^{k,n} \right| + \left| \rho_{i-1,j}^{k,n} - \varrho^{k}(s,\yb) \right| \dx \reva{+\left| \rho_{i,j}^{k,n} - \varrho^{k}(s,\yb) \right| \dx} + \mleft( \left| \rho_{i,j}^{k,n} \right| + \left| \varrho^k(s,\yb) \right| \mright) \dx^2  \mright] 
    .\end{align*}
    To proceed with the estimate of $A_2^1+\biglambda_2'^1$, we need estimates on the differences of the integrals over the test function $\phi$.
    For the first term we consider 
    \begin{align}\label{eq:42}
        & \left| \int_{Q_T} \int_{C_{i+1,j}} \phi\mleft( s,\yb,t^{n+1},\xb  \mright) dt - \int_{C_{i,j}} \phi\mleft( s,\yb,t^{n+1},\xb \mright) ds \, d\yb \right|\\
        & = \left| \int_\R \int_{C_{i,j}} \omega_{\varepsilon}(y_1 - x_1 - \dx)  - \omega_{\varepsilon}(y_1 - x_1) d\xb \, dy_1 \right| \\
        & \leq \int_\R \int_{C_{i,j}} \int_{-\dx}^{0} \left| \omega_{\varepsilon}'(y_1-x_1 + \xi) \right| d\xi \, d\xb \, dy_1 = \frac{\dx^2 \dy}{\varepsilon}.
    \end{align}
    The second one is considered using Taylor expansions
    \begin{align}\label{eq:41}
        & \int_{C^n} \int_{x_2^{j-\frac{1}{2}}}^{x_2^{j+\frac{1}{2}}} \phi\mleft( s,\yb,t,\mleft( x_1^{\reva{i-\frac{1}{2}}},x_2 \mright)^T  \mright) dt - \lambda_1 \int_{C_{i,j}} \phi\mleft( s,\yb,t^{n+1},\xb \mright) d\xb\\
        \leq & \int_{x_2^{j-\frac{1}{2}}}^{x_2^{j+\frac{1}{2}}} \left| \ddt \phi\mleft( s,\yb,\widetilde{t}^{n+\frac{1}{2}},\mleft( x_1^{\reva{i-\frac{1}{2}}},x_2 \mright)^T  \mright) \right| \dt^2 + \left| \ddx \phi\mleft( s,\yb,t^{n+1},\mleft( \widetilde{x}_1^{i},x_2 \mright)^T  \mright) \right| \dt \dx dx_2
    \end{align}
    with $\widetilde{t}^{n+\frac{1}{2}} \in [t^n,t^{n+1}]$ and $\widetilde{x}_1^{i} \in  [x_1^{i-\frac{1}{2}},x_1^{i+\frac{1}{2}}]$. Thus, we have
    \begin{align}\label{eq:A2lamb2s}
        A_2^1 &+ \biglambda_2'^1 \leq \, c_2 \lambda_1 \sum_{i,j \in \Z} \sum_{n=0}^{N_T-1} \left| \rho_{i+1,j}^{k,n}-\rho_{i,j}^{k,n} \right| \frac{\dx^2 \dy}{\varepsilon} \\
        &+ c_3 \int_{Q_T} \sum_{i,j \in \Z} \sum_{n=0}^{N_T-1} \Bigl[ \left| \rho_{i-1,j}^{k,n} - \rho_{i,j}^{k,n} \right| + \left| \rho_{i-1,j}^{k,n} - \varrho^{k}(s,\yb) \right| \dx\\
         &\hspace*{4.3cm} \reva{+ \left| \rho_{i,j}^{k,n} - \varrho^{k}(s,\yb) \right| \dx} + \mleft( \left| \rho_{i,j}^{k,n} \right| + \left| \varrho^k(s,\yb) \right| \mright) \dx^2  \Bigr] \\
        & \qquad \left(\int_{x_2^{j-\frac{1}{2}}}^{x_2^{j+\frac{1}{2}}} \left| \ddt \phi\mleft( s,\yb,\widetilde{t}^{n+\frac{1}{2}},\mleft( x_1^{i+\frac{1}{2}},x_2 \mright)^T  \mright) \right| \dt^2 + \left| \ddx \phi\mleft( s,\yb,t^{n+1},\mleft( \widetilde{x}_1^{i},x_2 \mright)^T  \mright) \right| \dt \dx dx_2 \right) ds \, d\yb
    .\end{align}
    The terms including the differences between two cell-average values of $\rho^k_\Delta$ are similar to those estimated in \cite[pp.\ 3380-3381]{aggarwal2024well}. The remaining are considered now, where we 
    insert \mbox{$\left| \rho_{\reva{i},j}^{k,n} - \varrho^{k}(s,\yb) \right| \linebreak[0] {\leq \left| \rho_{\reva{i},j}^{k,n} \right| + \left| \varrho^{k}(s,\yb) \right|}$}. This results in two terms \reva{(as analogously for $\bigl| \rho_{i-1,j}^{k,n} - \varrho^{k}(s,\yb)\bigr|$)}: One can be estimated by
    \begin{equation}
        \begin{aligned}
            &\sum_{i,j \in \Z} \sum_{n=0}^{N_T-1} \left| \rho_{\reva{i},j}^{k,n} \right| \dx \int_{x_2^{j-\frac{1}{2}}}^{x_2^{j+\frac{1}{2}}} \int_{Q_T} \left| \ddt \phi\mleft( s,\yb,\widetilde{t}^{n+\frac{1}{2}},\mleft( x_1^{i+\frac{1}{2}},x_2 \mright)^T  \mright) \right| \dt^2\\
            & \hspace{5cm} + \left| \ddx \phi\mleft( s,\yb,t^{n+1},\mleft( \widetilde{x}_1^{i},x_2 \mright)^T  \mright) \right| \dt \dx   ds \, d\yb dx_2 \\
            &\leq \sum_{i,j \in \Z} \sum_{n=0}^{N_T-1} \left| \rho_{\reva{i},j}^{k,n} \right| \dx \int_{x_2^{j-\frac{1}{2}}}^{x_2^{j+\frac{1}{2}}} \mleft( \frac{\dt^2}{\varepsilon_0} + \frac{\dt \dx}{\varepsilon} \mright) dx_2,
        \end{aligned}
    .\end{equation}
    and the other by
    \begin{align*}
        &\int_{Q_T} \left| \varrho^{k}(s,\yb) \right|\dx \sum_{i,j \in \Z} \sum_{n=0}^{N_T-1}\int_{x_2^{j-\frac{1}{2}}}^{x_2^{j+\frac{1}{2}}} \left| \ddt \phi\mleft( s,\yb,\widetilde{t}^{n+\frac{1}{2}},\mleft( x_1^{i+\frac{1}{2}},x_2 \mright)^T  \mright) \right| \dt^2  \\
        & \hspace{6cm} +\left| \ddx \phi\mleft( s,\yb,t^{n+1},\mleft( \widetilde{x}_1^{i},x_2 \mright)^T  \mright) \right| \dt \dx  ds \, d\yb dx_2\\
        \leq & \int_{Q_T} \left| \varrho^{k}(s,\yb) \right| \mleft(  \| \omega_{\varepsilon} \|_{L^1(\R)} \dt \left(\sum_{i \in \Z} \left| \omega_\varepsilon\mleft(y_1 - x_1^{i+\frac{1}{2}}\mright) \right| \dx\right) \left(\sum_{n=0}^{N_T-1} \left| \omega_{\varepsilon_0}'\mleft( s- \widetilde{t}^{n+\frac{1}{2}} \mright) \right| \dt \right)\mright) ds \, d\yb  \\
        & + \int_{Q_T} \left| \varrho^{k}(s,\yb) \right| \mleft(  \| \omega_{\varepsilon} \|_{L^1(\R)} \dx \left(\sum_{i \in \Z} \left| \omega_\varepsilon' \mleft(y_1 - \widetilde{x}_1^{i+\frac{1}{2}}\mright) \right| \dx\right) \left(\sum_{n=0}^{N_T-1} \left| \omega_{\varepsilon_0}\mleft( s- t^{n} \mright) \right| \dt \right)\mright) ds \, d\yb 
    .\end{align*}
    Due to the Lipschitz continuity of $\omega_{\varepsilon_0}'$ we can write 
    \begin{align}\label{eq:discrnormomega}
        \sum_{n=0}^{N_T-1} \left| \omega_{\varepsilon_0}'\mleft( s- \widetilde{t}^{n+\frac{1}{2}}\mright) \right| \dt = \int_{0}^{T} \left| \omega_{\varepsilon_0}'\mleft( s- t\mright) \right| dt + \mathcal{O}(\dt) = \left| \omega_{\varepsilon_0} \right|_{BV(\R)} + \mathcal{O}(\dt)
    \end{align}
    and analogue for $\omega_\varepsilon'$, as well as for $\omega_{\varepsilon_0}, {\omega}_\varepsilon$ using the $L^1(\R)$-norm instead.
    Thus, the term above can be estimated by 
    \begin{align*}
        \| \rho^k_\Delta \|_{L^1(Q_T)} \mleft( \frac{\dt}{\varepsilon_0} + \frac{\dx}{\varepsilon} \mright)
        + \| \varrho^k \|_{L^1(Q_T)} \mleft( \frac{\dt}{\varepsilon_0} + \frac{\dx}{\varepsilon} + \mathcal{O}\mleft( \dt^2 + \dx^2  \mright)  \mright)
    .\end{align*}
    The summand in \eqref{eq:A2lamb2s} including $\mleft( \left| \rho_{i,j}^{k,n} \right| + \left| \varrho^k(s,\yb) \right| \mright) \dx^2 $ can be estimated analogously with an additional $\dx$ such that, overall, we obtain
    \begin{align*}
        \left| \eqref{eq:A2lamb2s} \right| = \mathcal{O}\mleft( \frac{\dt}{\varepsilon_0} + \frac{\dx}{\varepsilon} + \dt^2 + \dx^2  \mright) 
    .\end{align*}
    The next step is to prove $A_3^1 + \biglambda_3'^1 = \mathcal{O}\mleft(\frac{\dx}{\varepsilon} + \frac{\dt}{\varepsilon_0} + \dx + \dt  \mright) $. We adhere to the calculation in \cite[pp.\ 3382-3383]{aggarwal2024well}, meaning we add a zero to obtain the sum of terms denoted by $\widetilde{\bigeps}_1$ and $\widetilde{\bigeps}_2$. We start with
    \begin{align*}
         \widetilde{\bigeps}_1 = \int_{Q_T}& \sum_{i,j \in \Z} \sum_{n=0}^{N_T-1}\mleft( f_1^k\mleft( t^n, \xb^{i+\frac{1}{2},j}, \varrho^k(s,\yb), \Rb_{i+\frac{1}{2},j}^n \mright) - f_1^k\mleft( t^n, \xb^{i-\frac{1}{2},j}, \varrho^k(s,\yb), \Rb_{i-\frac{1}{2},j}^n \mright) \mright)\\
         &\reva{\times \; \textnormal{sgn}\mleft( \rho_{i,j}^{k,n} - \varrho^k(s,\yb) \mright) } \mleft( \int_{C^n} \int_{x_2^{j-\frac{1}{2}}}^{x_2+\frac{1}{2}} \phi\mleft( s,\yb,t,\mleft( x_1^{i+\frac{1}{2}},x_2 \mright)^T  \mright) dt - \lambda_1 \int_{C_{i,j}} \phi\mleft( s,\yb,t^{n+1},\xb \mright) d\xb ds \, d\yb \mright)
    .\end{align*}
    In the absolute value, we first utilize assumption $(\fb)$ and \eqref{eq:41} to obtain
    \begin{align*}
        \left| \widetilde{\bigeps}_1 \right| \leq  c_4 \int_{Q^T} \left| \varrho^k(s,\yb) \right| \dx \sum_{i,j \in \Z} \sum_{n=0}^{N_T-1}\int_{x_2^{j-\frac{1}{2}}}^{x_2^{j+\frac{1}{2}}} &\left| \ddt \phi\mleft( s,\yb,\widetilde{t}^{n+\frac{1}{2}},\mleft( x_1^{i+\frac{1}{2}},x_2 \mright)^T  \mright) \right| \dt^2\\
        & + \left| \ddx \phi\mleft( s,\yb,t^{n+1},\mleft( \widetilde{x}_1^{i},x_2 \mright)^T  \mright) \right| \dt \dx  dx_2
    ,\end{align*}
    which equals one of the terms in $A_2^1+\biglambda_2'^1$ up to a constant. Hence, $\left| \widetilde{\bigeps}_1 \right| = \mathcal{O}\mleft( \frac{\dt}{\varepsilon_0} + \frac{\dx}{\varepsilon} + \dt^2 + \dx^2  \mright)$.
    The second term $\widetilde{\bigeps}_2$ given by
    \begin{align*}
        \widetilde{\bigeps}_2 \coloneq - \lambda_1 \int_{Q_T} \sum_{i,j \in \Z} \sum_{n=0}^{N_T-1} \mleft( f_1^k\mleft( t^{n}, \xb^{i+\frac{1}{2},j}, \varrho^k(s,\yb), \Rb_{i+\frac{1}{2},j}^{n} \mright) - f_1^k\mleft( t^{n}, \xb^{i-\frac{1}{2},j}, \varrho^k(s,\yb), \Rb_{i-\frac{1}{2},j}^{n} \mright)  \mright)& \\
        \left( \textnormal{sgn}(\rho_{i,j}^{k,n+1} - \varrho^k(s,\yb)) - \textnormal{sgn}(\rho_{i,j}^{k,n} - \varrho^k(s,\yb)) \right)
        \mleft( \int_{C_{i,j}} \phi\mleft(s,\yb,t^{n+1},\xb \mright) d\xb \mright) ds \, d\yb&
    \end{align*}
    is first treated as in \cite[pp.\ 3383-3384]{aggarwal2024well} (summation by parts, adding a zero) and further splitted into $\widetilde{\bigeps}_{21},\widetilde{\bigeps}_{22},\widetilde{\bigeps}_{23},\widetilde{\bigeps}_{24},\widetilde{\bigeps}_{25}$.
    Thus, we need to \reva{estimate} these terms. In our setting we have
    \begin{align*}
        \widetilde{\bigeps}_{21} = - \lambda_1 \int_{Q_T} \sum_{i,j \in \Z} \sum_{n=1}^{N_T} \mleft( f_1^k\mleft( t^{n-1}, \xb^{i+\frac{1}{2},j}, \varrho^k(s,\yb), \Rb_{i+\frac{1}{2},j}^{n-1} \mright) - f_1^k\mleft( t^{n-1}, \xb^{i-\frac{1}{2},j}, \varrho^k(s,\yb), \Rb_{i-\frac{1}{2},j}^{n-1} \mright)  \mright) \\
        \textnormal{sgn}(\rho_{i,j}^{k,\reva{n}} - \varrho^k(s,\yb))  \mleft( \int_{C_{i,j}} \phi\mleft(s,\yb,t^{n},\xb \mright)-\phi\mleft(s,\yb,t^{n+1},\xb \mright) d\xb \mright) ds \, d\yb
    .\end{align*}
    Again, first estimating the difference of the fluxes yields
    \begin{align*}
        \left| \widetilde{\bigeps}_{21} \right|& \leq c_4 \lambda_1 \dx \int_{Q_T} \left| \varrho^k(s,\yb) \right| \sum_{i,j \in \Z} \sum_{n=1}^{N_T} \mleft| \int_{C_{i,j}} \phi\mleft(s,\yb,t^{n},\xb \mright)-\phi\mleft(s,\yb,t^{n+1},\xb \mright) d\xb \mright| ds \, d\yb \\
        &= c_4 \dt \int_{Q_T} \left| \varrho^k(s,\yb) \right| \sum_{n=1}^{N_T} \left| \omega_{\varepsilon_0}\mleft( s-t^n \mright) - \omega_{\varepsilon_0}\mleft( s-t^{n+1} \mright)  \right| ds \, d\yb \leq c_4  \|\varrho^k \|_{L^1(Q_T)} \frac{\dt}{\varepsilon_0}
    ,\end{align*}
    since the discrete BV-norm on the time mesh of the test function $\omega_{\varepsilon_0}$ does not exceed the continuous BV-norm.
    Moreover, 
    \begin{align*}
        \left| \widetilde{\bigeps}_{22} + \widetilde{\bigeps}_{23} \right| 
        \leq \lambda_1 \int_{Q_T} \sum_{i,j \in \Z} \sum_{n=1}^{N_T} \Bigl| &f_1^k\mleft( t^{n}, \xb^{i+\frac{1}{2},j}, \varrho^k(s,\yb), \Rb_{i+\frac{1}{2},j}^{n} \mright) - f_1^k\mleft( t^{n}, \xb^{i-\frac{1}{2},j}, \varrho^k(s,\yb), \Rb_{i-\frac{1}{2},j}^{n} \mright)\\
        & - f_1^k\mleft( t^{n-1}, \xb^{i+\frac{1}{2},j}, \varrho^k(s,\yb), \Rb_{i+\frac{1}{2},j}^{n-1} \mright) + f_1^k\mleft( t^{n-1}, \xb^{i-\frac{1}{2},j}, \varrho^k(s,\yb), \Rb_{i-\frac{1}{2},j}^{n-1} \mright)  \Bigr|\\
        & \times \int_{C_{i,j}} \phi\mleft( s,\yb,t^{n+1}, \xb \mright) d\xb \; ds \, d\yb 
    .\end{align*}
    Thus, we again need an estimate on the flux terms. Successively applying the mean value theorem we obtain 
    \begin{align*}
        \Bigl| &f_1^k\mleft( t^{n}, \xb^{i+\frac{1}{2},j}, \varrho^k(s,\yb), \Rb_{i+\frac{1}{2},j}^{n} \mright) - f_1^k\mleft( t^{n}, \xb^{i-\frac{1}{2},j}, \varrho^k(s,\yb), \Rb_{i-\frac{1}{2},j}^{n} \mright)\\
        & - f_1^k\mleft( t^{n-1}, \xb^{i+\frac{1}{2},j}, \varrho^k(s,\yb), \Rb_{i+\frac{1}{2},j}^{n-1} \mright) + f_1^k\mleft( t^{n-1}, \xb^{i-\frac{1}{2},j}, \varrho^k(s,\yb), \Rb_{i-\frac{1}{2},j}^{n-1} \mright)  \Bigr|\\
        \leq &\; c_5 \left| \varrho^k(s,\yb) \right|  \dx (\dt + \dx) 
    \end{align*}
    and hence
    \begin{align*}
        \left| \widetilde{\bigeps}_{22} + \widetilde{\bigeps}_{23} \right| 
        &\leq c_5 (\dt + \dx) \int_{Q_T} \left| \varrho^k(s,\yb) \right| \sum_{i,j \in \Z} \sum_{n=1}^{N_T} \int_{C_{i,j}} \phi\mleft( s,\yb,t^{n+1}, \xb \mright) \dt \, d\xb \; ds \, d\yb \\
        &\leq  c_5 (\dt + \dx) \| \varrho^k \|_{L^1(Q_T)} \mathcal{O}\mleft( 1 + \dt \mright) = \mathcal{O}\mleft( \dt + \dx \mright) 
    ,\end{align*}
    where an estimate similar to \eqref{eq:discrnormomega} was used.
    The estimates for $\widetilde{\bigeps}_{24},\widetilde{\bigeps}_{25}$ follow similarily and we have proven $A_3^1 + \biglambda_3'^1 = \mathcal{O}\mleft(\frac{\dx}{\varepsilon} + \frac{\dt}{\varepsilon_0} + \dx + \dt  \mright)$. Consequently, with the analogue estimates for the \mbox{$x_2$-directions}, we have estimated all the terms, achieving the result that needed to be established.
\end{proof}

\end{document}